\documentclass[a4paper,12pt]{article}
\usepackage[utf8]{inputenc}
\usepackage[english]{babel}
\usepackage{sectsty}
\usepackage{mathtools,amssymb, amsthm, amsfonts, stmaryrd, eufrak,mathpazo}
\usepackage[backend=biber]{biblatex}
\addbibresource{uni.bib}
\usepackage{graphicx, subfig}
\usepackage{hhline}
\usepackage{rotating}
\usepackage{geometry}
\usepackage{titlesec}
\usepackage[absolute]{textpos}
\usepackage{tikz, tikz-qtree, tikz-cd}
\tikzset{
    lablv/.style={anchor=south, rotate=90, inner sep=.5mm},
    labld/.style={anchor=south, rotate=150, inner sep=.8mm},
    lablad/.style={anchor=north, rotate=40, inner sep=1.2mm}
}
\usepackage{pgffor,etoolbox}
\usepackage{pgfplotstable}
\usepackage{longtable}
\usepackage{booktabs}
\pgfplotsset{compat=1.18}
\usepackage{fp}
\usepackage{xstring}
\usepackage{youngtab ,ytableau}
\usepackage{enumitem}
\usepackage{fancyhdr}
\usepackage{colortbl}
\usepackage{color}
\definecolor{valentia}{rgb}{233,78,82}
\definecolor{titleblue}{rgb}{114, 146, 162}
\usepackage{array}
\usepackage{makecell}
\usepackage{float}
\usepackage{phoenician}
\usepackage{mdframed}
\usepackage{hyperref}
\usepackage{upgreek}
\usepackage{authblk}
\usepackage{multirow}
\usepackage{multicol}
\usepackage{scrextend}
\DeclareMathAlphabet{\mathscr}{U}{rsfso}{m}{n}
\DeclareMathSymbol{\smin}{\mathbin}{AMSa}{"39}
\usepackage{authblk}
\newcommand{\Addresses}{{
  \bigskip
  \footnotesize

  G.~Bellamy, \textsc{School of Mathematics and Statistics,
 University Place, Glasgow, G12 8QQ, Glasgow, UK.}\par\nopagebreak
  \textit{E-mail address}, G.~Bellamy: \texttt{gwyn.bellamy@glasgow.ac.uk} \par \nopagebreak
  \textit{Website}, G.~Bellamy: \texttt{https://www.gla.ac.uk/schools/mathematicsstatistics/staff/gwynbellamy/}

  \medskip

  R.~Paegelow, \textsc{Institut Montpelliérain Alexander Grothendieck, Université Montpellier 2,  Place Eugène Bataillon, 34095 MONTPELLIER Cedex, FRANCE.}\par\nopagebreak
  \textit{E-mail address}, R.~Paegelow: \texttt{raphael.paegelow@umontpellier.fr}

  \medskip

}}
\title{The Procesi bundle over the $\Gamma$-fixed points of the Hilbert scheme of points in $\mathbb{C}^2$}

\author{Gwyn Bellamy \& Raphaël Paegelow}
\begin{document}
\newgeometry{top=2cm, bottom=2.5cm, left=2.5cm, right=2.5cm}
\theoremstyle{definition}
\newtheorem{deff}{Definition}[section]
\newtheorem{nota}[deff]{Notation}

\newtheoremstyle{cremark}
    {\dimexpr\topsep/2\relax}
    {\dimexpr\topsep}
    {}
    {}
    {}
    {.}
    {.5em}
    {}

\theoremstyle{cremark}
\newtheorem{rmq}[deff]{Remark}
\newtheorem{ex}[deff]{Example}

\theoremstyle{plain}

\newtheorem{thm}[deff]{Theorem}
\newtheorem{cor}[deff]{Corollary}
\newtheorem{prop}[deff]{Proposition}
\newtheorem{lemme}[deff]{Lemma}
\newtheorem{fait}[deff]{Fact}
\newtheorem{conj}[deff]{Conjecture}
\newtheorem*{theorem1}{Theorem}
\newtheorem*{cor1}{Corollary}

\maketitle
\section*{Abstract}
For $\Gamma$ a finite subgroup of $\mathrm{SL}_2(\mathbb{C})$ and $n \geq 1$, we study the fibers of the Procesi bundle over the $\Gamma$-fixed points of the Hilbert scheme of $n$ points in the plane. For each irreducible component of this fixed point locus, our approach reduces the study of the fibers of the Procesi bundle, as an $(\mathfrak{S}_n \times \Gamma)$-module, to the study of the fibers of the Procesi bundle over an irreducible component of dimension zero in a smaller Hilbert scheme. When $\Gamma$ is of type $A$, our main result shows, as a corollary, that the fiber of the Procesi bundle over the monomial ideal associated with a partition $\lambda$ is induced, as an $(\mathfrak{S}_n \times \Gamma)$-module, from the fiber of the Procesi bundle over the monomial ideal associated with the core of $\lambda$. We give different proofs of this corollary in two edge cases using only representation theory and symmetric functions.

\section{Introduction}
The Procesi bundle is an important vector bundle on the Hilbert scheme of points in $\mathbb{C}^2$. It has played a key role in Hamain's proof of the theorem $n!$ \cite[Theorem $5.2.1$]{H01}. Take $n$ an integer greater or equal to $1$.  If $\mathfrak{S}_n$ denotes the symmetric group on $n$ letters, then the fibers of the Procesi bundle $\mathscr{P}^n$ are $\mathfrak{S}_n$-modules, isomorphic to the regular representation of $\mathfrak{S}_n$; thus the Procesi bundle has rank $n!$. Consider now the natural action $\mathrm{GL}_2(\mathbb{C})$ on $\mathbb{C}^2$. This action induces a $\mathrm{GL}_2(\mathbb{C})$-action on the Hilbert scheme $\mathcal{H}_n$ of $n$ points in $\mathbb{C}^2$. Let $\Gamma$ be a finite subgroup of $\mathrm{SL}_2(\mathbb{C})$. Over $\mathcal{H}_n^{\Gamma}$, the fibers of $\mathscr{P}^n$ are $(\mathfrak{S}_n \times \Gamma)$-modules. The main result of this article shows that, for each irreducible component $\mathcal{C}$ of $\mathcal{H}_n^{\Gamma}$ and for each $I \in \mathcal{C}$, the fiber of $\mathscr{P}^n$ at $I$ can be constructed, as an $(\mathfrak{S}_n \times \Gamma)$-module, by induction of the fiber of $\mathscr{P}^k$ over an ideal $I_0\in \mathcal{H}_{k}^{\Gamma}$ for some ${k \leq n}$ such that $\{I_0\}$ is an irreducible component of $\mathcal{H}_k^{\Gamma}$. The integer $k$ is explicit and depends on $\mathcal{C}$, $\Gamma$ and $n$. The result is stated in section \ref{sec_Proc_bund} below. But first we introduce the main players and the notation used in the article.

\subsection{The Hilbert scheme of points in the plane}
\noindent Let $\mathcal{H}_n$ be the Hilbert scheme of $n$ points in $\mathbb{C}^2$. As a set
\begin{equation*}
\mathcal{H}_n= \{I \subset \mathbb{C}[x,y]| I \text{ is an ideal  and } \mathrm{dim}(\mathbb{C}[x,y]/I)=n\}.
\end{equation*}
Fogarty showed \cite[Proposition $2.2$ \& Theorem $2.9$]{Fo68} that $\mathcal{H}_n$ is a smooth connected $2n$ dimensional algebraic variety.
Let the group $\mathfrak{S}_n$ act on ${(\mathbb{C}^2)}^n$ by permuting the $n$ copies of $\mathbb{C}^2$ and denote by $\sigma_n$ the Hilbert-Chow morphism. It is defined as follows
\begin{center}
$\sigma_n\colon \begin{array}{ccccc}
\mathcal{H}_n & \to & {(\mathbb{C}^2)}^n/\mathfrak{S}_n\\
I & \mapsto & \sum_{p \in V(I)}{\mathrm{dim}\left((\mathbb{C}[x,y]/I)_p\right)[p]}  \\
\end{array}$\\
\end{center}
where $[p]$ denotes the orbit of $p$ in  ${(\mathbb{C}^2)}^n/\mathfrak{S}_n$ for $p \in {(\mathbb{C}^2)}^n$.

\subsection{The Procesi bundle}
\label{sec_Proc_bund}
The Procesi vector bundle is a $GL_2(\mathbb{C})$-equivariant vector bundle on the Hilbert scheme of $n$ points in $\mathbb{C}^2$.
To construct the Procesi bundle, one first needs to introduce the isospectral Hilbert scheme. The $n^{\text{th}}$-isospectral Hilbert scheme, denoted by $\mathcal{X}_n$, is the reduced fiber product of $\mathcal{H}_n$ with $(\mathbb{C}^2)^n$ over $(\mathbb{C}^2)^n/\mathfrak{S}_n$:
\begin{equation}\label{eq:cartprod}
\begin{tikzcd}
 |[label={[label distance=-2mm]-45:\lrcorner}]|  \mathcal{H}_n \times_{{(\mathbb{C}^2)}^n/{\mathfrak{S}_n}} {(\mathbb{C}^2)}^n \ar[d, "\rho_n"'] \ar[r, "f_n"]   & {(\mathbb{C}^2)}^n \ar[d, "\pi_n"]\\
\mathcal{H}_n  \ar[r, "\sigma_n"']& {(\mathbb{C}^2)}^n/{\mathfrak{S}_n}
\end{tikzcd}
\end{equation}
Here, the morphism $\pi_n$ is the quotient map. The scheme $\mathcal{X}_n$ is an algebraic variety, projective over $(\mathbb{C}^2)^n$. Crucially, Haiman \cite[Theorem $5.2.1$]{H03} has proven that $\rho_n$ is a finite and flat morphism. This implies that the sheaf $\mathscr{P}^n:={\rho_n}_*\mathcal{O}_{\mathcal{X}_n}$ is locally free and thus defines a vector bundle on $\mathcal{H}_n$. This vector bundle is the $n^{\text{th}}$-Procesi bundle. Note that, by construction, the fibers of $\mathscr{P}^n$  are $\mathfrak{S}_n$-modules. The natural $\mathrm{GL}_2(\mathbb{C})$-action on $\mathcal{H}_n$ and the diagonal action on ${(\mathbb{C}^2)}^n$ give a $\mathrm{GL}_2(\mathbb{C})$-action on $\mathcal{X}_n$, making $\mathscr{P}^n$ into a $\mathrm{GL}_2(\mathbb{C})$-equivariant vector bundle. Moreover, by letting $\mathfrak{S}_n$ act trivially on $\mathcal{H}_n$, all morphisms $\rho_n$, $\sigma_n$, $\pi_n$ and $f_n$ are $(\mathfrak{S}_n \times \mathrm{GL}_2(\mathbb{C}))$-equivariant.

For $I \in \mathcal{H}_n$, denote by $\mathscr{P}^n_{|I}$ the fiber of the vector bundle associated with $\mathscr{P}^n$ at $I$. Note that when $I \in \mathcal{H}_n^{\Gamma}$, the fiber $\mathscr{P}^n_{|I}$ is an $(\mathfrak{S}_n \times \Gamma)$-module. Let $\mathcal{C}$ be an irreducible component of $\mathcal{H}_n^{\Gamma}$ of dimension $2r$. Take ${(p_1,\dots,p_r) \in {({(\mathbb{C}^2)}\setminus \{(0,0)\})}^{r}}$ such that for each $(i,j) \in \llbracket 1, r \rrbracket ^2, i \neq j \Rightarrow \Gamma p_i \cap \Gamma p_j = \emptyset$. Let $q:=(\Gamma p_1,\dots,\Gamma p_{r}) \in {(\mathbb{C}^2)}^{r\ell}$ and $p:=(0,q) \in {(\mathbb{C}^2)}^n$. Let $S_p$ denote the stabilizer of $p$ in $\mathfrak{S}_n \times \Gamma$. Let $\mathrm{g}_{\Gamma}:=n-\ell r$. Then there exists a unique irreducible component $\{I_{0}\}$ of the scheme $\mathcal{H}_{\mathrm{g}_{\Gamma}}^{\Gamma}$ such that a generic point of $\mathcal{C}$ is of the form $V(I_0) \cup V(q)$. This defines a bijection between the irreducible components of $\mathcal{H}_{\mathrm{g}_{\Gamma}}^{\Gamma}$ of dimension zero and the $2r$-dimensional components of $\mathcal{H}_n^{\Gamma}$.  The main result of this article is the following theorem.
\begin{theorem1}
For each irreducible component $\mathcal{C}$ of $\mathcal{H}_n^{\Gamma}$, there exists an isomorphism of groups ${\textphnc{\As} \colon S_p \xrightarrow{\,\smash{\raisebox{-0.65ex}{\ensuremath{\scriptstyle\sim}}}\,} \mathfrak{S}_{\mathrm{g}_{\Gamma}} \times \Gamma}$, making $\mathscr{P}^{\mathrm{g}_{\Gamma}}_{|I_{0}}$ into a $S_p$-module such that for each $I \in \mathcal{C}$,
\begin{equation*}
\left[\mathscr{P}^n_{|I}\right]_{\mathfrak{S}_n \times \Gamma}=\left[\mathrm{Ind}_{S_p}^{\mathfrak{S}_n\times \Gamma}\left(\mathscr{P}^{\mathrm{g}_{\Gamma}}_{|I_{0}}\right)\right]_{\mathfrak{S}_n \times \Gamma}.
\end{equation*}
\end{theorem1}
\noindent This theorem reduces the study of the fibers of the Procesi bundle over the $\Gamma$-fixed points  to the study of the fibers of the Procesi bundle over the irreducible components of the $\Gamma$-fixed points of dimension zero. For this reason, we refer throughout to this result as the reduction theorem.

\subsection{Combinatorial consequences in type $A$}
In the case where $\Gamma$ is the cyclic subgroup $\mu_{\ell}$ (of order $\ell$, generated by $\omega_{\ell}$) in the maximal diagonal torus of $\mathrm{SL}_2(\mathbb{C})$ (type $A$), the reduction theorem interplays well with the combinatorics of $\ell$-cores. Indeed, each irreducible component of the scheme $\mathcal{H}_n^{\Gamma}$ contains a fixed point under the maximal diagonal torus of $\mathrm{SL}_2(\mathbb{C})$. These fixed points are indexed by partitions of $n$. If $\lambda$ is a partition of $n$, let $I_{\lambda} \in \mathcal{H}_n$ denote the associated fixed point and write $\mathscr{P}^{n}_{\lambda}$ for the fiber of the vector bundle associated with $\mathscr{P}^n$ at $I_{\lambda}$. Denote by  $\gamma_{\ell}$ the $\ell$-core associated with $\lambda$. The size of $\gamma_{\ell}$ is denoted $\mathrm{g}_{\ell}$ and $r_{\ell}:=\frac{n-\mathrm{g}_{\ell}}{\ell}$.
Let $\tau_{\ell}$ be the character of $\mu_{\ell}$ such that $\tau_{\ell}(\omega_{\ell})$ is equal to $\zeta_{\ell}$, a fixed primitive $\ell^{\text{th}}$ root of unity, and by $w_{\ell,n} \in \mathfrak{S}_n$ the product of the $r_{\ell}$ cycles of length $\ell$:
\begin{equation*}
 (\mathrm{g}_{\ell}+1,...,\mathrm{g}_{\ell}+\ell)...(n-\ell+1,...,n).
\end{equation*}
Let $C_{\ell,n}$ be the cyclic subgroup of $\mathfrak{S}_{r_{\ell}\ell}$ generated by $w_{\ell,n}$. Consider also the subgroup $W_{\ell,n}^{\mathrm{g}_{\ell}}:= \mathfrak{S}_{\mathrm{g}_{\ell}} \times C_{\ell,n}$ of $\mathfrak{S}_n$. Denote by $\theta_{\ell}$ the character of $C_{\ell,n}$ such that $\theta_{\ell}(w_{\ell,n})=\zeta_{\ell}$. The following is a corollary of the main reduction theorem.

\begin{cor1}
For each partition $\lambda$ of $n$, one has the following decomposition of $\mathscr{P}^n_{\lambda}$:
\begin{equation*}
\left[\mathscr{P}^n_{\lambda}\right]_{\mathfrak{S}_n\times \mu_{\ell}}=\sum_{i=0}^{\ell \smin 1}{\sum_{j=0}^{\ell \smin 1}{\left[\mathrm{Ind}_{W_{\ell,n}^{\mathrm{g}_{\ell}}}^{\mathfrak{S}_n}\left((\mathscr{P}^{\mathrm{g}_{\ell}}_{\gamma_{\ell}})^{\ell}_j\boxtimes\theta_{\ell}^{i-j}\right)\boxtimes \tau_{\ell}^i\right]_{\mathfrak{S}_n\times \mu_{\ell}}}}.
\end{equation*}
\end{cor1}
\subsection{Notation}
In this last subsection of the introduction we fix notation. Let $G$ be a finite group. Denote by $\mathcal{R}(G)$ the Grothendieck ring of the category of finite-dimensional $\mathbb{C} G$-modules and $\mathcal{R}^{\mathrm{gr}}(G)$ the Grothendieck ring of the category of $\mathbb{Z}$-graded finite-dimensional $\mathbb{C} G$-modules. For $V$ a given $\mathbb{C} G$-module (resp. graded $\mathbb{C} G$-module), let $[V]_{G}$ (resp. $[V]^{\mathrm{gr}}_G$), or just $[V]$ (resp. $[V]^{\mathrm{gr}}$) denote the element in $\mathcal{R}(G)$ (resp. $\mathcal{R}^{\mathrm{gr}}(G)$ ) associated with $V$.

All schemes will be over $\mathbb{C}$ and we will also suppose that the structure morphism is separated and of finite type over $\mathbb{C}$. An algebraic variety will be an integral scheme. If $S$ is a scheme and $s \in S$, denote by $\kappa_S(s)$ the residue field of the local ring $\mathcal{O}_{S,s}$. Fix an integer $n \geq 1$, and $\Gamma$ a finite subgroup of $\mathrm{SL}_2(\mathbb{C})$. We denote the order of $\Gamma$ by $\ell$.

By a $\Gamma$-module, one means a finite-dimensional $\mathbb{C}[\Gamma]$-module. Let $\mathrm{Irr}_{\Gamma}$ be the set of all characters of irreducible representations of $\Gamma$. It is finite since $\Gamma$ is finite. Denote by $\chi_0 \in \mathrm{Irr}_{\Gamma}$ the trivial character. Moreover, the group $\Gamma$ being a subgroup of $\mathrm{SL}_2(\mathbb{C})$, it has a natural $2$-dimensional representation called the standard representation and denoted $\rho_{\mathrm{std}}$. It is irreducible whenever $\Gamma$ is not a cyclic group. In the following, the character of the standard representation will be denoted $\chi_{\mathrm{std}}$.
\\

The article is organized as follows. In the first section, we recall results obtained in \cite{Pae24} concerning the irreducible components of $\mathcal{H}_n^{\Gamma}$. In the second section, we state and prove the main result (c.f. Theorem \ref{reduction_thm}). In the third section, we dive into the combinatorial consequences of the reduction theorem when $\Gamma$ is a cyclic group and prove Corollary \ref{cor1_a}. Moreover, we prove Corollary \ref{cor1_a} in two edge cases using only representation theory and symmetric function theory, in particular avoiding Haiman's results on the isospectral Hilbert scheme. In the last section, we study the combinatorics arising from Theorem \ref{reduction_thm} when $\Gamma$ is the binary dihedral group.

\subsection{Acknowledgements}

The first author was supported in part by Research Project Grant RPG-2021-149 from The Leverhulme Trust and EPSRC grants EP-W013053-1 and EP-R034826-1. The authors would like to thank C\'edric Bonnaf\'e for suggesting we consider the Procesi bundle on fixed point components and for many fruitful discussions.

\section{Root systems and irreducible components of $\mathcal{H}_n^{\Gamma}$}

In this section, we introduce notation on root systems and recall the parameterization of irreducible components of $\mathcal{H}_n^{\Gamma}$. These are also the connected components since $\mathcal{H}_n^{\Gamma}$ is smooth thanks to the fact that $\Gamma$ is a finite group.

\begin{deff}
Define the McKay undirected multigraph  $G_{\Gamma}$ associated with $\Gamma$ the following way. The set of vertices is $I_{\Gamma} := \mathrm{Irr}_{\Gamma}$ and there is an edge between a pair of irreducible characters $(\chi,\chi')$ if and only if $\langle \chi\chi_{\mathrm{std}}|\chi'\rangle \neq 0$, with multiplicity $\langle \chi\chi_{\mathrm{std}}|\chi'\rangle$. Let $A_{\Gamma}$ denote the adjacency matrix of $G_{\Gamma}$.
\end{deff}

\begin{rmq}
Note that $G_{\Gamma}$ is indeed undirected because $\Gamma$ is a subgroup of $\mathrm{SL}_2(\mathbb{C})$.
\end{rmq}

\noindent Thanks to the McKay correspondence, one can associate to $\Gamma$ a realization $(\mathfrak{h},\Pi,\Pi^{\vee})$ of the generalized Cartan matrix $2 \mathrm{Id} - A_{\Gamma}$. Let $\tilde{W}$ denote the Weyl group associated with $(\mathfrak{h},\Pi,\Pi^{\vee})$. For each $\chi \in I_{\Gamma}$, the simple root, respectively coroot, associated with $\chi$ is denoted by  $\alpha_{\chi} \in \Pi$, respectively $\alpha_{\chi}^{\vee} \in \Pi^{\vee}$. For each $\chi \in I_{\Gamma}$, let $\Lambda_{\chi}\in \mathfrak{h}^*$ (resp. $\Lambda^{\vee}_{\chi} \in \mathfrak{h}$) be the fundamental weight (resp. fundamental coweight) associated with $\alpha^{\vee}_{\chi}$ (resp. $\alpha_{\chi}$). Let $\tilde{Q}$ (resp. $\tilde{Q}^{\vee}$) denote the root (resp. coroot) lattice of $(\mathfrak{h},\Pi,\Pi^{\vee})$. Write
\[
{\delta^{\Gamma}:=\sum_{\chi \in I_{\Gamma}}{\mathrm{dim}(X_{\chi})\alpha_{\chi}} \in \tilde{Q}}, \quad \delta^{\vee}_{\Gamma}:=\sum_{\chi \in I_{\Gamma}}{\mathrm{dim}(X_{\chi})\alpha^{\vee}_{\chi}} \in \tilde{Q}^{\vee},
\]
for the minimal positive imaginary root, resp. minimal positive imaginary coroot.
Denote by $\tilde{Q}^+ \subset \tilde{Q}$ the monoid  generated by $\Pi$.
For $d \in \tilde{Q}$ and $\chi \in I_{\Gamma}$, write $d_{\chi}$ and $|d|_{\Gamma}$ for the integers  $\langle d, \Lambda_{\chi}^{\vee} \rangle$ and $\sum_{\chi \in I_{\Gamma}}{d_{\chi}\delta^{\Gamma}_{\chi}}$ respectively.

\begin{deff}
\label{def_action}
We define a $\tilde{W}$-action on $\tilde{Q}$. Denote by $s_{\chi} \in \tilde{W}$, for $\chi \in I_{\Gamma}$, the standard generators of $\tilde{W}$ and choose $d \in \tilde{Q}$:
\begin{equation*}
(s_\chi.d)_{\xi} :=
\begin{cases}
(\sum_{h\in \overline{E_{\Gamma}}, h'=\chi}{d_{h''}}) - d_{\chi} &\text{ if } \chi=\xi\neq \chi_0 \\
(\sum_{h\in \overline{E_{\Gamma}}, h'=\chi}{d_{h''}}) - d_{\chi} + 1 &\text{ if } \chi=\xi= \chi_0 \\
d_{\xi} &\text{ else}.
\end{cases}
\end{equation*}
\end{deff}
\begin{rmq}
This action corresponds to the one defined in \cite[Definition 2.3]{Nak03} in the special case of double, one vertex framed quivers, and it is linked to the natural action by reflections on $\mathfrak{h}^*$ (denoted $*$) in the following way. Thanks to the remark at the end of \cite[Definition $2.3$]{Nak03}, one has
\begin{equation}
\label{link_eq}
\omega*(\Lambda_0-\alpha) = \Lambda_0 - \omega.\alpha, \quad \forall (\omega,\alpha) \in \tilde{W} \times \tilde{Q}
\end{equation}
where $\Lambda_0$ denotes $\Lambda_{\chi_0}$.
\end{rmq}

\noindent Let $Q$ (respectively $W$) denote the sublattice of $\tilde{Q}$ (respectively the subgroup of $\tilde{W}$) generated by ${\{\alpha_{\chi} | \chi \in I_{\Gamma}\setminus \{\chi_0\}\}}$ (respectively by $\big\{s_{\chi} | \chi \in I_{\Gamma}\setminus \{\chi_0\}\big\}$). For $a\in Q$, denote by $t_a \in \tilde{W}$ the image of $a$ under the isomorphism ${W \ltimes Q \xrightarrow{\,\smash{\raisebox{-0.65ex}{\ensuremath{\scriptstyle\sim}}}\,} \tilde{W}}$.

\begin{lemme}
\label{trans}
For each $(a,d) \in Q \times \tilde{Q}$, there exists $k \in \mathbb{Z}$ such that $t_a.d= d-a + k\delta^{\Gamma}$.
\end{lemme}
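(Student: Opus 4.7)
The plan is to transfer the computation from the action $\cdot$ of Definition \ref{def_action} to the usual reflection action $*$ of $\tilde{W}$ on $\mathfrak{h}^*$, via the bridge supplied by (\ref{link_eq}). Applied with $\omega = t_a$ and $\alpha = d$, that identity yields
\[
t_a * (\Lambda_0 - d) \;=\; \Lambda_0 - t_a.d,
\]
so the lemma reduces to an explicit evaluation of the left-hand side.

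For this I would invoke the well-known formula from Kac--Moody theory describing the action of a translation $t_a$ (for $a \in Q$) on $\mathfrak{h}^*$: for any $\mu \in \mathfrak{h}^*$,
\[
t_a * \mu \;=\; \mu \,+\, \langle \mu, \delta^{\vee}_{\Gamma}\rangle \, a \;-\; \Big((\mu, a) + \tfrac{1}{2}(a,a)\,\langle \mu, \delta^{\vee}_{\Gamma}\rangle\Big)\,\delta^{\Gamma},
\]
where $(\cdot,\cdot)$ is the standard invariant form on $\mathfrak{h}^*$. I would then take $\mu = \Lambda_0 - d$. Here $\langle \Lambda_0, \delta^{\vee}_{\Gamma}\rangle = \dim(X_{\chi_0}) = 1$ by the defining property of the fundamental weight, while $\langle d, \delta^{\vee}_{\Gamma}\rangle = 0$ for every $d \in \tilde{Q}$ since $\langle \alpha_\chi, \delta^{\vee}_{\Gamma}\rangle = 0$ for all $\chi \in I_\Gamma$ (recall $\delta^{\vee}_{\Gamma}$ lies in the kernel of the affine Cartan matrix). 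Thus $\Lambda_0 - d$ has level one, and the formula collapses to
\[
t_a * (\Lambda_0 - d) \;=\; (\Lambda_0 - d) + a - k\,\delta^{\Gamma}, \qquad k \;:=\; (\Lambda_0 - d,\, a) + \tfrac{1}{2}(a,a).
\]
Combining the two displays gives $t_a.d = d - a + k\,\delta^{\Gamma}$, and $k \in \mathbb{Z}$ because $\Gamma \subset \mathrm{SL}_2(\mathbb{C})$ forces $G_\Gamma$ to be simply-laced (hence $(a,a) \in 2\mathbb{Z}$) while $(\Lambda_0, a)$ and $(d,a)$ are manifestly integers.

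The main (and fairly modest) obstacle is bookkeeping: one must reconcile the paper's conventions — in particular the identification of $Q$ with a translation lattice inside $\tilde{W}$, and the identification of $Q$ with its dual via the invariant form — with the standard translation formula, and carefully distinguish the canonical pairing $\mathfrak{h}^* \times \mathfrak{h} \to \mathbb{C}$ (used for the level $\langle \mu, \delta^{\vee}_{\Gamma}\rangle$) from the bilinear form on $\mathfrak{h}^*$ (used for $(\mu,a)$ and $(a,a)$). Once these are aligned, the statement follows by a single substitution, and I would expect no non-trivial case analysis to be necessary.
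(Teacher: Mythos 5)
Your proposal is correct and follows essentially the same route as the paper: both reduce to the $*$ action via the bridge (\ref{link_eq}) and then apply the standard affine translation formula (Kac, Formula 6.5.2), using that $\Lambda_0 - d$ has level one and $\langle d, \delta^\vee_\Gamma\rangle = 0$. You have merely spelled out the substitution and the integrality of $k$ in more detail than the paper, which simply cites the formula and concludes.
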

\begin{proof}
Thanks to relation (\ref{link_eq}) and \cite[Formula $6.5.2$]{kac90}, there exists $k \in \mathbb{Z}$ such that
\begin{equation*}
t_a.d =  d- a + \langle d, \delta^{\vee}_{\Gamma} \rangle + k \delta^{\Gamma}
\end{equation*}
Since $d \in \tilde{Q}$, $\langle d,\delta^{\vee}_{\Gamma}  \rangle=0$ by definition of $\delta^{\vee}_{\Gamma}$.
\end{proof}

\begin{lemme}
\label{lemme_ld}
For each $d \in \tilde{Q}$, there exists a unique integer $r$ such that $d$ and $r\delta^{\Gamma}$ are in the same $\tilde{W}$-orbit for the $.$ action of Definition \ref{def_action}.
\end{lemme}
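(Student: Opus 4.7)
The plan is to construct an explicit $\mathbb{Z}$-valued function $r \colon \tilde{Q} \to \mathbb{Z}$ that is invariant under the $\cdot$-action and satisfies $r(m\delta^\Gamma) = m$ for every $m \in \mathbb{Z}$. Uniqueness in the statement is then immediate, and existence reduces to exhibiting, for each $d \in \tilde{Q}$, an element of $\tilde{W}$ sending $r(d)\delta^\Gamma$ to $d$; for this I would use an explicit translation furnished by Lemma \ref{trans}.

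To define $r$, I would use the standard invariant symmetric bilinear form $(\cdot \mid \cdot)$ on $\mathfrak{h}^*$ coming from the Kac--Moody structure, normalized so that $(\alpha_\chi \mid \alpha_\chi) = 2$. Because we are in simply-laced type, $\tilde{Q}$ is an even lattice and $(\delta^\Gamma \mid \delta^\Gamma) = 0$; hence the expression $r(d) := d_{\chi_0} - \tfrac{1}{2}(d \mid d)$ takes integer values and satisfies $r(m\delta^\Gamma) = m$. Invariance of $r$ under $s_\chi$ for $\chi \neq \chi_0$ is immediate: by (\ref{link_eq}) and $s_\chi * \Lambda_0 = \Lambda_0$, the $\cdot$- and $*$-actions of $s_\chi$ coincide, and both $(d \mid d)$ and $d_{\chi_0}$ are preserved. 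For $s_{\chi_0}$, the same relation together with $s_{\chi_0} * \Lambda_0 = \Lambda_0 - \alpha_{\chi_0}$ yields $s_{\chi_0}.d = d + (1 - \langle d, \alpha_{\chi_0}^\vee\rangle)\alpha_{\chi_0}$; a short computation using the simply-laced identity $(d \mid \alpha_{\chi_0}) = \langle d, \alpha_{\chi_0}^\vee\rangle$ then shows that the change in $d_{\chi_0}$ cancels exactly against the change in $\tfrac{1}{2}(d \mid d)$.

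For existence, given $d \in \tilde{Q}$, set $b := d_{\chi_0}\delta^\Gamma - d$. Its $\alpha_{\chi_0}$-coefficient equals $d_{\chi_0} - d_{\chi_0} = 0$, so $b \in Q$ and $t_b \in \tilde{W}$. Using relation (\ref{link_eq}) together with the level-$1$ translation formula from \cite[Formula 6.5.2]{kac90}, one computes $t_b.(r(d)\delta^\Gamma) = (r(d) + \tfrac{1}{2}(b \mid b))\delta^\Gamma - b$. Since $(b \mid b) = (d \mid d)$ (because $\delta^\Gamma$ is isotropic and orthogonal to every element of $\tilde{Q}$), the leading coefficient equals $r(d) + \tfrac{1}{2}(d \mid d) = d_{\chi_0}$, whence $t_b.(r(d)\delta^\Gamma) = d_{\chi_0}\delta^\Gamma - b = d$. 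The main subtlety is the $s_{\chi_0}$-invariance check, which requires careful handling of the bilinear form together with the affine shift coming from (\ref{link_eq}); once this is established, the remaining steps are routine bookkeeping with the translation formula.
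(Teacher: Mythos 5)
Your argument is correct, and the calculations check out. For existence you and the paper both use a translation by $\pm(d - d_{\chi_0}\delta^{\Gamma}) \in Q$, the only difference being that you pin down the $\delta^{\Gamma}$-coefficient $k$ from Lemma~\ref{trans} exactly, whereas the paper applies $t_a$ to $d$ directly and merely observes that the result is \emph{some} multiple of $\delta^{\Gamma}$. The genuine divergence is in the uniqueness argument: the paper exploits the semidirect-product structure $\tilde{W} = W \ltimes Q$ together with the $W$-invariance of $\delta^{\Gamma}$ (since $\delta^{\Gamma}$ lies in the kernel of the Cartan matrix) to reduce the $\tilde{W}$-orbit of $r\delta^{\Gamma}$ to a $Q$-orbit, and then reads off $a = 0$ from Lemma~\ref{trans}; you instead build an explicit $\tilde{W}$-invariant function $r(d) = d_{\chi_0} - \tfrac{1}{2}(d \mid d)$ that takes the value $m$ on $m\delta^{\Gamma}$, which makes uniqueness immediate. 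Your route is somewhat longer — it requires introducing the normalized invariant bilinear form and verifying invariance generator by generator, with the $s_{\chi_0}$ case needing the computation you flag as the subtle step — but it yields a dividend the paper's proof does not: a closed formula $\mathrm{wt}(d) = d_{\chi_0} - \tfrac{1}{2}(d \mid d)$ for the quantity defined only implicitly in Definition~\ref{deff_weight}, which is what one would actually use to compute $\mathrm{wt}(d)$ (and hence, via \cite[Proposition 3.11]{Pae24}, the dimension $2\,\mathrm{wt}(d)$ of a fixed-point component) in concrete examples.
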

\begin{proof}
Take $d \in \tilde{Q}$. Then $a:=d-d_0\delta^{\Gamma} \in Q$ and thanks to Lemma \ref{trans}, $t_a.d$ is an element of the desired form. Now suppose  that there are two integers  $r_1$ and $r_2$ such that $r_1\delta^{\Gamma}$ and $r_2\delta^{\Gamma}$ are in the same $\tilde{W}$-orbit. Since $\delta^{\Gamma}$ is in the kernel of the generalized Cartan matrix $2\mathrm{Id}-A_{\Gamma}$, $\delta^{\Gamma}$ is fixed under the action of $W$-action. This observation reduces the $\tilde{W}$-orbit of $r_1\delta^{\Gamma}$ to the $Q$-orbit. There must then exist $a \in Q$ such that ${t_a.r_1\delta^{\Gamma}=r_2\delta^{\Gamma}}$. Using Lemma \ref{trans}, there exists $k \in \mathbb{Z}$ such that $t_a.r_1\delta^{\Gamma}= r_1\delta^{\Gamma}-a\text{ } + k\delta^{\Gamma}$, one can conclude that $a=0$ and that $r_1=r_2$.
\end{proof}

\begin{deff}
\label{deff_weight}
The \textit{weight} $\mathrm{wt}(d)$ of $d \in \tilde{Q}$ is the unique integer $r$ such that $r\delta^{\Gamma}$ and $d$ are in the same $\tilde{W}$-orbit.
\end{deff}

\noindent Now that the weight of an element of $\tilde{Q}$ has been defined, we can recall the parameterization of connected components of the fixed point locus. Set
\[
{\mathcal{A}^n_{\Gamma}:=\left\{\alpha \in \tilde{Q}^+ \big | |\alpha|_{\Gamma}=n \text{ and } \mathrm{wt}(\alpha) \geq 0\right\}}.
\]
It is shown in \cite[Corollary $3.3$]{Pae24} that the set $\mathcal{A}^n_{\Gamma}$ indexes the irreducible components of $\mathcal{H}_n^{\Gamma}$. Indeed, if one denotes by ${{H}_n^{\Gamma,d}:=\left\{I \in \mathcal{H}_n^{\Gamma} | \mathrm{Tr}(\mathbb{C}[x,y]/I)=\sum_{\chi \in I_{\Gamma}}{d_{\chi}\chi}\right\}}$, then
\begin{equation*}
\mathcal{H}_n^{\Gamma}= \coprod_{d \in \mathcal{A}^n_{\Gamma}}{\mathcal{H}_n^{\Gamma,d}}.
\end{equation*}
By \cite[Proposition $3.11$]{Pae24}, the connected component $\mathcal{H}_n^{\Gamma,d}$ labelled by $d \in \mathcal{A}^n_{\Gamma}$ has dimension $2\mathrm{wt}(d)$. The restriction of $\mathscr{P}^n$ to each connected component $\mathcal{H}_n^{\Gamma,d}$ of $\mathcal{H}_n^{\Gamma}$ defines a vector bundle and the fibers of this vector bundle are $(\mathfrak{S}_n\times \Gamma)$-modules.

\section{The Reduction Theorem}
\label{sec_reduction}
In this section, we state and  prove the main result of the article. We begin with some preliminary results. Fix $d \in \mathcal{A}^n_{\Gamma}$. To improve readability, we set $r_d:=\mathrm{wt}(d)$. Consider ${d_0=d-r_d\delta^{\Gamma}}$  and note that, by construction, $\mathrm{wt}(d_0)=0$. We fix ${\mathrm{g}_{\Gamma}:=|d_0|_{\Gamma}}$. The connected component $\mathcal{H}_{\mathrm{g}_{\Gamma}}^{\Gamma,{d_0}} \subset \mathcal{H}_{\mathrm{g}_{\Gamma}}^{\Gamma}$ is zero-dimensional and we write $I_{d_0}$ for the unique ideal of $\mathbb{C}[x,y]$ belonging to $\mathcal{H}_{\mathrm{g}_{\Gamma}}^{\Gamma,{d_0}}$.

\begin{lemme}
\label{lemme_supp_0}
The image of $I_{d_0}$ under $\sigma_{\mathrm{g}_{\Gamma}}$ is the point $\overline{0}\in {(\mathbb{C}^2)}^{\mathrm{g}_{\Gamma}}/\mathfrak{S}_{\mathrm{g}_{\Gamma}}$.
\end{lemme}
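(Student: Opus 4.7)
The plan is to exploit the scalar $\mathbb{C}^{*}$-action on $\mathbb{C}^{2}$ given by $t\cdot(x,y)=(tx,ty)$. Because this action is implemented by multiplication by the central scalar matrix $t\cdot\mathrm{Id}_{2}\in\mathrm{GL}_{2}(\mathbb{C})$, it commutes with the $\Gamma$-action. The induced $\mathbb{C}^{*}$-action on $\mathcal{H}_{\mathrm{g}_{\Gamma}}$ therefore preserves $\mathcal{H}_{\mathrm{g}_{\Gamma}}^{\Gamma}$, and since $\mathbb{C}^{*}$ is connected it preserves each connected component $\mathcal{H}_{\mathrm{g}_{\Gamma}}^{\Gamma,d}$ separately: the labelling $d$ records the $\Gamma$-character of $\mathbb{C}[x,y]/I$, which is a locally constant function of $I\in\mathcal{H}_{\mathrm{g}_{\Gamma}}^{\Gamma}$.

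By the paragraph preceding the lemma, the component $\mathcal{H}_{\mathrm{g}_{\Gamma}}^{\Gamma,d_{0}}$ has dimension $2\,\mathrm{wt}(d_{0})=0$ and reduces to the single point $\{I_{d_{0}}\}$. A connected algebraic group acting on a zero-dimensional scheme must act trivially, so $I_{d_{0}}$ is fixed by the scaling $\mathbb{C}^{*}$.

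The Hilbert--Chow morphism $\sigma_{\mathrm{g}_{\Gamma}}$ is equivariant for the scaling action on both sides: on $(\mathbb{C}^{2})^{\mathrm{g}_{\Gamma}}/\mathfrak{S}_{\mathrm{g}_{\Gamma}}$ it simply rescales every point of the configuration. Hence $\sigma_{\mathrm{g}_{\Gamma}}(I_{d_{0}})$ is a $\mathbb{C}^{*}$-fixed point of $(\mathbb{C}^{2})^{\mathrm{g}_{\Gamma}}/\mathfrak{S}_{\mathrm{g}_{\Gamma}}$. The only such fixed point is $\overline{0}$, since any configuration containing a nonzero point is moved by dilation. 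This yields $\sigma_{\mathrm{g}_{\Gamma}}(I_{d_{0}})=\overline{0}$.

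There is no real obstacle; the entire argument is a short equivariance check. The only point to justify with a little care is the claim that scaling fixes each component $\mathcal{H}_{n}^{\Gamma,d}$, but this is immediate from the fact that the decomposition is indexed by the discrete, locally constant invariant given by the $\Gamma$-isotypic multiplicities of $\mathbb{C}[x,y]/I$.
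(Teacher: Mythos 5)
Your proof is correct and is essentially the same as the paper's: both use the diagonal scaling $\mathbb{C}^{\times}$-action, the connectedness of $\mathbb{C}^{\times}$ together with the zero-dimensionality of the component to deduce that $I_{d_0}$ is a scaling-fixed ideal, and the equivariance of the Hilbert--Chow morphism to conclude that its image is the unique $\mathbb{C}^{\times}$-fixed point $\overline{0}$. The extra justification you give for why the scaling action preserves the component (the local constancy of the $\Gamma$-character) is a welcome clarification, but the argument is otherwise identical.
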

\begin{proof}
Consider the diagonal $\mathbb{C}^{\times}$-action on $\mathbb{C}^2$ given by
\begin{equation*}
{\forall (t,(x,y)) \in \mathbb{C}^{\times} \times \mathbb{C}^2, t.(x,y):=(tx,ty)}.
\end{equation*}
This action induces a $\mathbb{C}^{\times}$-action on $\mathcal{H}_{\mathrm{g}_{\Gamma}}$ which commutes with the $\Gamma$-action and the Hilbert-Chow morphism $\sigma_{\mathrm{g}_{\Gamma}}$ is $\mathbb{C}^{\times}$-equivariant. The fact that $\mathbb{C}^{\times}$ is connected and the irreducible component $\mathcal{H}_{\mathrm{g}_{\Gamma}}^{\Gamma,{d_0}}$ equals $\{I_{d_0}\}$ implies that $I_{d_0}$ is a $\mathbb{C}^{\times}$-fixed point. This ideal must then be mapped by $\sigma_{\mathrm{g}_{\Gamma}}$ to a $\mathbb{C}^{\times}$-fixed point of ${(\mathbb{C}^2)}^{\mathrm{g}_{\Gamma}}/\mathfrak{S}_{\mathrm{g}_{\Gamma}}$. Finally, we note that $\overline{0}$ is the only fixed point in ${(\mathbb{C}^2)}^{\mathrm{g}_{\Gamma}}/\mathfrak{S}_{\mathrm{g}_{\Gamma}}$.
\end{proof}

Denote by $U^f$ the following open subset of ${(\mathbb{C}^2)}^{r_d}$:
\begin{equation*}
\{(p_1,\dots ,p_{r_d}) \in {(\mathbb{C}^2\setminus \{(0,0)\})}^{r_d}\big{|} \forall (i,j) \in \llbracket 1, r_d \rrbracket^2, i \neq j \Rightarrow \Gamma p_i \cap \Gamma p_j = \emptyset\}.
\end{equation*}
Let $D_{d_0}:=\{I_{d_0} \cap  \bigcap_{j=1}^{r_d}{I(\Gamma p_j)} \subset \mathbb{C}[x,y] \big{|} (p_1,\dots, p_{r_d}) \in U^f\}$.

\begin{lemme}
The set $D_{d_0}$ is a dense open subset of $\mathcal{H}_n^{\Gamma,d}$.
\end{lemme}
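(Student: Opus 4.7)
The plan is to identify $D_{d_0}$ with an explicit open stratum of $\mathcal{H}_n^{\Gamma,d}$ cut out by the Hilbert--Chow morphism, and then deduce density from irreducibility.

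First, I would verify $D_{d_0} \subset \mathcal{H}_n^{\Gamma,d}$. For $(p_1,\ldots,p_{r_d}) \in U^f$, the ideal $I := I_{d_0} \cap \bigcap_{j=1}^{r_d} I(\Gamma p_j)$ is $\Gamma$-stable, since each factor is. By Lemma~\ref{lemme_supp_0}, $V(I_{d_0}) = \{0\}$, while the definition of $U^f$ ensures the orbits $\Gamma p_1,\ldots,\Gamma p_{r_d}$ are pairwise disjoint and disjoint from $\{0\}$. The Chinese Remainder Theorem then yields a $\Gamma$-equivariant isomorphism
\[
\mathbb{C}[x,y]/I \;\cong\; \mathbb{C}[x,y]/I_{d_0} \;\oplus\; \bigoplus_{j=1}^{r_d} \mathbb{C}[x,y]/I(\Gamma p_j),
\]
from which the colength is $\mathrm{g}_{\Gamma} + r_d \ell = n$ and the trace equals $d_0 + r_d \delta^{\Gamma} = d$.

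For openness, I would introduce the subset $Y \subset ((\mathbb{C}^2)^n/\mathfrak{S}_n)^\Gamma$ of $\Gamma$-invariant multisets of the form $\mathrm{g}_{\Gamma}[0] + \sum_{j=1}^{r_d}[\Gamma p_j]$ with $(p_1,\ldots,p_{r_d}) \in U^f$. This is open in the $\Gamma$-fixed symmetric product, since its complement is the closed locus where two of the orbits collide or one meets $0$. Because $\sigma_n$ is continuous, $\sigma_n^{-1}(Y) \cap \mathcal{H}_n^{\Gamma,d}$ is open. I would then show $D_{d_0} = \sigma_n^{-1}(Y) \cap \mathcal{H}_n^{\Gamma,d}$: the inclusion $\subset$ is immediate, and conversely, if $\sigma_n(I) = \mathrm{g}_{\Gamma}[0] + \sum_j [\Gamma p_j]$ with $(p_j) \in U^f$, then the stalk lengths of $\mathbb{C}[x,y]/I$ are $\mathrm{g}_{\Gamma}$ at $0$ and $1$ at each $g \cdot p_j$. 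The Chinese Remainder Theorem then decomposes $I = I'_0 \cap \bigcap_j I(\Gamma p_j)$ with $I'_0$ supported at $0$ of colength $\mathrm{g}_{\Gamma}$. Computing $\Gamma$-traces gives $\mathrm{Tr}(\mathbb{C}[x,y]/I'_0) = d - r_d\delta^{\Gamma} = d_0$, so $I'_0 \in \mathcal{H}_{\mathrm{g}_{\Gamma}}^{\Gamma,d_0} = \{I_{d_0}\}$, whence $I'_0 = I_{d_0}$ and $I \in D_{d_0}$.

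Finally, density is immediate: $\mathcal{H}_n^{\Gamma,d}$ is a connected component of the smooth variety $\mathcal{H}_n^\Gamma$, hence irreducible, so any nonempty open subset is dense. The step I expect to be most delicate is confirming that $Y$ is genuinely open in the $\Gamma$-fixed symmetric product and running the reconstruction from $\sigma_n(I)$ back to $I$ cleanly; the uniqueness of the ``at-$0$'' component as precisely $I_{d_0}$ leans crucially on the zero-dimensionality $\mathcal{H}_{\mathrm{g}_{\Gamma}}^{\Gamma,d_0} = \{I_{d_0}\}$ recorded earlier.
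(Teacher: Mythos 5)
Your inclusion $D_{d_0}\subset \mathcal{H}_n^{\Gamma,d}$ via the Chinese Remainder Theorem, your reconstruction of the ``at-$0$'' factor $I_0'$ from $\sigma_n(I)$ using the zero-dimensionality of $\mathcal{H}_{\mathrm{g}_\Gamma}^{\Gamma,d_0}=\{I_{d_0}\}$, and your density-from-irreducibility step are all sound. The gap is in the openness of $Y$. When $\mathrm{g}_\Gamma\geq\ell$, the set $Y$ is \emph{not} open in $((\mathbb{C}^2)^n/\mathfrak{S}_n)^\Gamma$: the complement is not merely the locus where two free orbits collide or one meets $0$, because the $\Gamma$-fixed symmetric product is stratified by the multiplicity at the origin (which must be $\equiv n \bmod \ell$) and a cycle $(\mathrm{g}_\Gamma-\ell)[0]+\sum_{j=1}^{r_d+1}[\Gamma p_j]$, which lies outside $Y$, degenerates to a cycle in $Y$ as one free orbit moves onto the origin. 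Concretely, near a cycle $\mathrm{g}_\Gamma[0]+\sum_j[\Gamma p_j]\in Y$, the $\mathrm{g}_\Gamma$ points clustered at $0$ form a $\Gamma$-invariant multiset and (as soon as $\mathrm{g}_\Gamma\geq\ell$) can deform to have $\ell$ of them spin off into a free orbit. So $\sigma_n^{-1}(Y)$ need not be open in $\mathcal{H}_n$, and openness of $D_{d_0}$ does not follow as stated.

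Your strategy can be repaired, but it needs an argument you do not give: one must first observe that $\sigma_n(\mathcal{H}_n^{\Gamma,d})$ contains no cycle with more than $r_d$ free orbits. Indeed, if $I\in\mathcal{H}_n^{\Gamma,d}$ and $\sigma_n(I)$ has $k$ free orbits, the piece of $\mathbb{C}[x,y]/I$ supported at $0$ has $\Gamma$-trace $d-k\delta^\Gamma$, which must index a nonempty $\Gamma$-fixed component in the smaller Hilbert scheme, forcing $\mathrm{wt}(d-k\delta^\Gamma)=r_d-k\geq 0$. With this constraint in hand, the complement of $Y$ inside $\sigma_n(\mathcal{H}_n^{\Gamma,d})$ consists only of cycles with strictly fewer free orbits or with colliding orbits, which is closed under limits, so $Y$ is \emph{relatively} open in the image and $D_{d_0}$ is open in $\mathcal{H}_n^{\Gamma,d}$. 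The paper avoids this entire discussion: it builds a morphism $U^f\to\mathcal{H}_n$ (pulling back the tautological bundle from the rank-$n$ sheaf $(p_1,\dots,p_{r_d})\mapsto\mathbb{C}[x,y]/I$), notes the fibres are finite, applies Zariski's Main Theorem to deduce that $D_{d_0}$ is locally closed of dimension $2r_d$, and then uses that $\mathcal{H}_n^{\Gamma,d}$ is irreducible of that same dimension to conclude openness and density simultaneously. That route sidesteps the topology of the $\Gamma$-fixed symmetric product altogether and is the more robust one.
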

\begin{proof}
In type $A$, this is \cite[Lemma $7.8.(i)$]{Gor08}.
Take $I=I_{d_0} \cap  \bigcap_{j=1}^{r_d}{I(\Gamma p_j)} \in D_{d_0}$. Lemma \ref{lemme_supp_0} implies that ${V(I_{d_0})=\overline{0}}$. Therefore, for all $j \in \llbracket 1, r_d \rrbracket$ we have ${V(I_{d_0}) \cap \Gamma p_i = \emptyset}$, which gives an isomorphism of $\Gamma$-modules
\begin{equation*}
\label{splitting_iso}
\mathbb{C}[x,y]/I \simeq \mathbb{C}[x,y]/I_{d_0} \oplus \bigoplus_{j=1}^{r_d}{\mathbb{C}[x,y]/I(\Gamma p_j)}.
\end{equation*}
This isomorphism shows that $I$ is of codimension $\mathrm{g}_{\Gamma} + r_d\ell=n$ in $\mathbb{C}[x,y]$ and that the character of the  $\Gamma$-module $\mathbb{C}[x,y]/I$ is $d$. This means that $D_{d_0} \subset \mathcal{H}_n^{\Gamma,d}$. The association $(p_1,\dots ,p_{r_d}) \mapsto \mathbb{C}[x,y]/I$ defines a vector bundle over $U^f$ whose fibers are cyclic $\mathbb{C}[x,y]$-modules of dimension $n$. Thus, there is a (unique) morphism $U^f \to \mathcal{H}_n$ such that this vector bundle is the pull-back of the tautological bundle on $\mathcal{H}_n$. The fibres of this morphism are finite i.e. it is a quasi-finite morphism. Hence, by Zariski's Main Theorem \cite[Théorème $8.12.6$]{EGA43}, the image $D_{d_0}$ of $U^f$ is a (connected) locally closed subset of $\mathcal{H}_n$ of dimension $2 r_d$. Since $D_{d_0}$ is contained in $\mathcal{H}_n^{\Gamma,d}$ and the latter also has dimension $2r_d$, we deduce that $D_{d_0}$ is an open dense subset of $\mathcal{H}_n^{\Gamma,d}$.
\end{proof}

\noindent Throughout the remainder of this section we fix $(p_1,\dots,p_{r_d}) \in U^f$. Denote by $J$ the ideal $\bigcap_{j=1}^{r_d}{I(\Gamma p_j)}$ and set ${I_d:=I_{d_0} \cap J \in \mathcal{H}_n^{\Gamma,d}}$. Let ${q:=(\Gamma p_1,\dots,\Gamma p_{r_d}) \in {(\mathbb{C}^2)}^{r_d\ell}}$ and ${p:=(0,q)}$, which is a point in ${\pi_n^{-1}(\sigma_n(I_d))\subset {(\mathbb{C}^2)}^n}$. Denote by $S_p$ the stabilizer of $p$ in $\mathfrak{S}_n \times \Gamma$.
\begin{rmq}
By construction,  $J$ is an element of $\mathcal{H}_{r_d\ell}^{\Gamma}$.
\end{rmq}

\noindent By our choice of $p \in (\mathbb{C}^2)^n$, $S_p$ is a subgroup of the product $\mathfrak{S}_{\mathrm{g}_{\Gamma}} \times \mathfrak{S}_{r_d \ell} \times \Gamma$. Let ${\nabla\colon \Gamma \to S_p}$ be the morphism of groups making the following diagram of groups morphisms commute
\begin{center}
\begin{tikzcd}
&\mathfrak{S}_{\mathrm{g}_{\Gamma}}& \\
*\ar[ur] & S_p \ar[r, hook] \ar[u] & (\mathfrak{S}_{\mathrm{g}_{\Gamma}} \times \mathfrak{S}_{r_d\ell} \times \Gamma),  \ar[dl, "p_3"] \ar[ul, "p_1"']\\
& \Gamma \ar[u, "\nabla"] \ar[ul]&
\end{tikzcd}
\end{center}
where $*$ is the trivial group. Note that this diagram implies that $\nabla(\Gamma)$ is a subgroup of $(\mathfrak{S}_{r_d\ell} \times \Gamma)$. Then we have a group isomorphism
\[
\textphnc{\As} \colon \begin{array}{ccc}
S_p & \xrightarrow{\,\smash{\raisebox{-0.65ex}{\ensuremath{\scriptstyle\sim}}}\,} & \mathfrak{S}_{\mathrm{g}_{\Gamma}} \times \Gamma\\
(x_1,x_2,\gamma) & \mapsto & (x_1,\gamma)
\end{array}
\]
with inverse given by $(\sigma,\gamma) \mapsto \sigma\nabla(\gamma)$.

\noindent We can now state the main theorem of this article.
\begin{thm}
\label{reduction_thm}
The isomorphism $\textphnc{\As}$, endows $\mathscr{P}^{\mathrm{g}_{\Gamma}}_{|I_{d_0}}$ with a $S_p$-module structure, such that for each $I \in \mathcal{H}_n^{\Gamma,d}$,
\begin{equation*}
\left[\mathscr{P}^n_{|I}\right]_{\mathfrak{S}_n \times \Gamma}=\left[\mathrm{Ind}_{S_p}^{\mathfrak{S}_n\times \Gamma}\left(\mathscr{P}^{\mathrm{g}_{\Gamma}}_{|I_{d_0}}\right)\right]_{\mathfrak{S}_n \times \Gamma}.
\end{equation*}
\end{thm}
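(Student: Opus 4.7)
The plan is to reduce to a single generic ideal by connectedness, and then exploit the disjoint-support structure of that ideal to factorize the scheme-theoretic fiber of $\rho_n$. Concretely, the restriction of $\mathscr{P}^n$ to the connected component $\mathcal{H}_n^{\Gamma,d}$ is an $(\mathfrak{S}_n \times \Gamma)$-equivariant vector bundle, so the trace of every $(\sigma,\gamma)$ on the fiber is a locally constant function on the base, hence constant; all fibers over $\mathcal{H}_n^{\Gamma,d}$ are therefore isomorphic as $(\mathfrak{S}_n \times \Gamma)$-modules. Combined with the density of $D_{d_0}$ in $\mathcal{H}_n^{\Gamma,d}$ established just above, it is enough to verify the formula for $I = I_d = I_{d_0} \cap J$.

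The geometric heart of the argument is a factorization of $\rho_n^{-1}(I_d)$. Since the supports of $I_{d_0}$ and $J$, namely $\{0\}$ and $\bigsqcup_{j} \Gamma p_j$, are disjoint and $\Gamma$-stable, the $\mathfrak{S}_n$-stabilizer of $p$ is exactly $\mathfrak{S}_{\mathrm{g}_{\Gamma}}$, and near $\sigma_n(I_d)$ the symmetric product $(\mathbb{C}^2)^n/\mathfrak{S}_n$ étale-locally splits as $((\mathbb{C}^2)^{\mathrm{g}_{\Gamma}}/\mathfrak{S}_{\mathrm{g}_{\Gamma}}) \times (\mathbb{C}^2)^{r_d\ell}$, with $\sigma_n$ and $\pi_n$ splitting compatibly as $\sigma_{\mathrm{g}_{\Gamma}} \times \mathrm{id}$ and $\pi_{\mathrm{g}_{\Gamma}} \times \mathrm{id}$. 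Together with the finite flatness of $\rho_n$ proved by Haiman, this étale-local analysis of the reduced fiber product defining $\mathcal{X}_n$ yields a $(\mathfrak{S}_n \times \Gamma)$-equivariant isomorphism of finite schemes
\begin{equation*}
\rho_n^{-1}(I_d) \;\cong\; \mathfrak{S}_n \times_{\mathfrak{S}_{\mathrm{g}_{\Gamma}} \times \mathfrak{S}_{r_d\ell}} \bigl(\rho_{\mathrm{g}_{\Gamma}}^{-1}(I_{d_0}) \times \rho_{r_d\ell}^{-1}(J)\bigr).
\end{equation*}
Passing to global sections then produces an $(\mathfrak{S}_n \times \Gamma)$-equivariant identity
\begin{equation*}
\mathscr{P}^n_{|I_d} \;\cong\; \mathrm{Ind}_{\mathfrak{S}_{\mathrm{g}_{\Gamma}} \times \mathfrak{S}_{r_d\ell}}^{\mathfrak{S}_n}\bigl(\mathscr{P}^{\mathrm{g}_{\Gamma}}_{|I_{d_0}} \otimes \mathscr{P}^{r_d\ell}_{|J}\bigr),
\end{equation*}
with $\Gamma$ acting diagonally on the tensor product.

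To conclude, I would compute the ``reduced'' factor explicitly. Since $J$ is the reduced ideal of $r_d\ell$ distinct points with trivial $\Gamma$-stabilizers, $\rho_{r_d\ell}^{-1}(J)$ is simply the reduced $\mathfrak{S}_{r_d\ell}$-orbit of $q$, whose coordinate ring is $\mathrm{Ind}_{\nabla(\Gamma)}^{\mathfrak{S}_{r_d\ell} \times \Gamma} \mathbb{C}$. Inserting this into the previous display and performing a Mackey-type calculation (the projection formula absorbing the diagonal $\Gamma$-action, followed by induction in stages) converts the right-hand side into $\mathrm{Ind}_{S_p}^{\mathfrak{S}_n \times \Gamma}\bigl(\mathscr{P}^{\mathrm{g}_{\Gamma}}_{|I_{d_0}}\bigr)$, with the $S_p$-module structure on $\mathscr{P}^{\mathrm{g}_{\Gamma}}_{|I_{d_0}}$ being exactly the one transported from its native $(\mathfrak{S}_{\mathrm{g}_{\Gamma}} \times \Gamma)$-structure via $\textphnc{\As}$. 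The principal technical hurdle is the scheme-theoretic factorization of $\rho_n^{-1}(I_d)$: while the disjoint-support decomposition makes it geometrically transparent, a rigorous proof requires a careful étale-local analysis of $\pi_n$, $\sigma_n$ and the reduced fiber product defining $\mathcal{X}_n$, together with the verification that the non-reduced structure at the ``zero coordinates'' is captured precisely by $\rho_{\mathrm{g}_{\Gamma}}^{-1}(I_{d_0})$ and that both $\mathfrak{S}_n$- and $\Gamma$-equivariances are preserved throughout; once this factorization is in hand, the rest of the argument is a direct representation-theoretic manipulation.
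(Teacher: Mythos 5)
Your proposal is correct in outline and takes a genuinely different route from the paper's. Both approaches start the same way: local constancy of the character reduces matters to the single ideal $I_d = I_{d_0}\cap J$, and both ultimately rest on Haiman's factorization isomorphism $\beta$ (the paper's equation~\eqref{eq:betamap}, from \cite[Lemma~3.3.1]{H01}). From there you diverge. You globalize: you assert a $(\mathfrak{S}_n\times\Gamma)$-equivariant factorization of the entire scheme-theoretic fiber $\rho_n^{-1}(I_d)$ as a balanced product, pass to global sections to obtain
\[
\mathscr{P}^n_{|I_d}\cong\mathrm{Ind}_{\mathfrak{S}_{\mathrm{g}_\Gamma}\times\mathfrak{S}_{r_d\ell}\times\Gamma}^{\mathfrak{S}_n\times\Gamma}\bigl(\mathscr{P}^{\mathrm{g}_\Gamma}_{|I_{d_0}}\otimes\mathscr{P}^{r_d\ell}_{|J}\bigr),
\]
compute $\mathscr{P}^{r_d\ell}_{|J}=\mathrm{Ind}_{\nabla(\Gamma)}^{\mathfrak{S}_{r_d\ell}\times\Gamma}\mathbb{C}$ directly from the freeness of the orbit, and then collapse by the projection formula (using $S_p=\mathfrak{S}_{\mathrm{g}_\Gamma}\times\nabla(\Gamma)$, so the tensor factor $\mathscr{P}^{r_d\ell}_{|J}$ is induced from the trivial $S_p$-module) together with transitivity of induction. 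That last Mackey step is correct and pleasantly clean. The paper instead localizes: it uses the general Lemma~\ref{lemme_ind_fib} to write $[\mathscr{P}^n_{|I_d}]$ as induced from the localization $(\mathscr{P}^n_{|I_d})_p$ at the single point $p$ of the support orbit, and then identifies $(\mathscr{P}^n_{|I_d})_p\cong\mathscr{P}^{\mathrm{g}_\Gamma}_{|I_{d_0}}$ by building a surjection $\textphnc{\Ahd}$ out of $\beta$ (Proposition~\ref{h_exists}, Lemma~\ref{h_is_eq}) and then invoking the $n!$-dimension count to upgrade the surjection to an isomorphism (Lemma~\ref{end_of_proof}). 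So the paper's proof needs only a \emph{local} consequence of $\beta$ near $p$, traded for a careful semisimplicity/nilpotency argument and a dimension-counting trick; your proof needs a \emph{global} equivariant version of $\beta$ over the full orbit, traded for a slicker representation-theoretic endgame and a transparent appearance of the ``free'' factor $\mathscr{P}^{r_d\ell}_{|J}$. The one caution worth flagging is that the balanced-product factorization of $\rho_n^{-1}(I_d)$ is asserted rather than derived: Haiman's $\beta$ is an isomorphism only over the open set $V$, which is \emph{not} $\mathfrak{S}_n$-stable, so passing from the local statement over $V$ to the global equivariant factorization still requires checking that the $\mathfrak{S}_n$-translates of $\beta$ glue, are pairwise compatible, and exhaust the (non-reduced) fiber — exactly the technical content the paper sidesteps by working at a single point $p$ and counting dimensions.
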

\noindent The proof is postponed to the end of the section since we first require several intermediate results. To improve readability, set
\begin{itemize}
\item $x^0:=(I_{d_0},0) \in \mathcal{X}_{\mathrm{g}_{\Gamma}}$
\item ${x^q:=(J,q) \in \mathcal{X}_{r_d\ell}}$
\item ${x^p:=(I_d,p) \in \mathcal{X}_n}$
\item $x^{(0,q)}:=((I_{d_0},0),(J,q)) \in \mathcal{X}_{\mathrm{g}_{\Gamma}}\times \mathcal{X}_{r_d\ell}$.
\end{itemize}
We fix affine open subsets $U^{d} \subset \mathcal{H}_n$ and $U^{d^0} \subset \mathcal{H}_{\mathrm{g}_{\Gamma}}$, containing $I_d$ and $I_{d_0}$ respectively. Since $S_p$ is a finite group and $I_d$ is fixed by $S_p$, we may assume that $U^d$ is $S_p$-stable. Set ${A^{d}:= \mathcal{O}_{\mathcal{H}_n}(U^d)}$ and  $A^{d_0}:=\mathcal{O}_{\mathcal{H}_{\mathrm{g}_{\Gamma}}}(U^{d_0})$. The morphisms $\rho_n$ and $\rho_{\mathrm{g}_{\Gamma}}$ are finite and thus affine. Therefore, we also fix $B^d:=\mathcal{O}_{\mathcal{X}_n}(\rho_n^{-1}(U^d))$ and ${B^{d_0}:=\mathcal{O}_{\mathcal{X}_{\mathrm{g}_{\Gamma}}}(\rho_{\mathrm{g}_{\Gamma}}^{-1}(U^{d_0}))}$.

If $R$ is a commutative ring and $M$ an $R$-module then we write $\mathrm{Ann}_{R}(M)$ for its annihilator $\{r \in R | \forall m \in M, r.m=0\}$ and $\mathrm{Supp}_{R}(M)=\{p \in \mathrm{Spec}(R)| M_{p} \neq 0\}$ for the support of $M$.
\begin{lemme}
\label{lemme_ind_fib}
Let $G$ be a finite group acting on an affine variety $V$ over $\mathbb{C}$. Let $M$ be a finite dimensional $\mathbb{C}[V] \rtimes G$-module such that $\mathrm{Supp}_{\mathbb{C}[V]} (M)$  is a $G$-orbit. Let $q \in \mathrm{Supp}_{\mathbb{C}[V]}(M)$ and denote by $G_q$ the stabilizer of $q$ in $G$. Then there is an isomorphism of $(\mathbb{C}[V] \rtimes G)$-modules
\begin{equation*}
M \simeq \mathrm{Ind}_{\mathbb{C}[V] \rtimes G_q}^{\mathbb{C}[V] \rtimes G}\left(M_q\right).
\end{equation*}
\end{lemme}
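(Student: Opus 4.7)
The plan is to produce an explicit orbit-indexed decomposition of $M$ as a $\mathbb{C}[V]$-module and to recognize this decomposition as the standard description of an induced $(\mathbb{C}[V] \rtimes G)$-module. Because $M$ is finite dimensional over $\mathbb{C}$, the quotient ring $R := \mathbb{C}[V]/\mathrm{Ann}_{\mathbb{C}[V]}(M)$ is Artinian, and its maximal spectrum is exactly $\mathrm{Supp}_{\mathbb{C}[V]}(M) = G \cdot q = \{q_1,\dots,q_k\}$, a finite set of closed points with $k = [G:G_q]$. By the Chinese Remainder Theorem for Artinian rings, $R$ decomposes as the product of its localizations at the $q_i$, giving pairwise orthogonal idempotents $e_1,\dots,e_k \in R$ with $\sum_i e_i = 1$, where $e_i$ is supported at $q_i$. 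These idempotents act on $M$ and yield the $\mathbb{C}[V]$-module decomposition
\begin{equation*}
M = \bigoplus_{i=1}^{k} e_i M,
\end{equation*}
in which $e_i M$ coincides with the localization $M_{q_i}$ since $M$ has finite length.

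Next I would verify that the $G$-action permutes these summands according to the orbit action. For $g \in G$, the translated element $g \cdot e_i$ is again an idempotent of $R$ and is supported at $g \cdot q_i$; by uniqueness of the primitive idempotents it must equal $e_{\sigma(i)}$, where $g \cdot q_i = q_{\sigma(i)}$. Consequently $g \cdot (e_i M) = e_{\sigma(i)} M$ as subspaces of $M$. Setting $q = q_1$, this shows that the summand $M_q$ is preserved by the subring $\mathbb{C}[V] \rtimes G_q \subset \mathbb{C}[V] \rtimes G$, so it acquires a natural $(\mathbb{C}[V] \rtimes G_q)$-module structure compatible with its description as a localization.

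Finally, choose coset representatives $g_1 = e, g_2, \dots, g_k$ for $G/G_q$ such that $g_i \cdot q = q_i$, and consider the canonical evaluation map
\begin{equation*}
\Phi \colon \mathrm{Ind}_{\mathbb{C}[V] \rtimes G_q}^{\mathbb{C}[V] \rtimes G}\left(M_q\right) = \bigoplus_{i=1}^{k} g_i \otimes M_q \longrightarrow M, \qquad g_i \otimes m \longmapsto g_i \cdot m.
\end{equation*}
From the previous paragraph, the image of the $i$-th summand equals $M_{q_i}$, so $\Phi$ is a $\mathbb{C}$-linear isomorphism by virtue of the orbit decomposition of $M$. Compatibility with the full $(\mathbb{C}[V] \rtimes G)$-action is then a direct verification from the defining rule on an induced module. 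The main obstacle is not conceptual but rather the bookkeeping required to match the orbit decomposition of $M$ with the coset decomposition of the induced module; beyond that, everything reduces to the Artinian structure of $R$ forced by finite-dimensionality together with the orbit hypothesis on the support.
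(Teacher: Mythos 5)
Your proposal is correct and follows essentially the same route as the paper's proof: both obtain the orbit-indexed decomposition $M \simeq \bigoplus_{p\in\mathrm{Supp}(M)} M_p$ from finite-dimensionality (you via Artinian CRT idempotents, the paper by citing Eisenbud's Theorem 2.13, which encodes the same fact), both note that the $G$-action permutes the summands according to the orbit action, and both identify the canonical multiplication map out of the induced module as an isomorphism by matching it against this decomposition.
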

\begin{proof}
Since the module $M$ is finite-dimensional, \cite[Theorem $2.13$]{Eis95} says that the diagonal map $\tilde{\phi} \colon M \to \bigoplus_{p \in \mathrm{Supp}_{\mathbb{C}[V]}(M)}{M_p}$ is an isomorphism of $\mathbb{C}[V]$-modules. For each  $g \in G$, multiplication $m \mapsto g . m$ defines an isomorphism of $\mathbb{C}$-vector spaces ${M_q \xrightarrow{\,\smash{\raisebox{-0.65ex}{\ensuremath{\scriptstyle\sim}}}\,} M_{g.q}}$. Therefore, we may rewrite $\tilde{\phi}$ as
\begin{equation*}
\phi\colon M \xrightarrow{\,\smash{\raisebox{-0.65ex}{\ensuremath{\scriptstyle\sim}}}\,} \bigoplus_{\bar{g} \in G/G_q}{M_{\bar{g}.q}} \text{  }.
\end{equation*}
Again using \cite[Theorem $2.13$]{Eis95}, we idenitify $M_q$ with the subspace of $M$ consisting of sections killed by some power of the maximal ideal $m_q$ defining $q$. Then there is a canonical multiplication map
\[
\psi \colon (\mathbb{C}[V] \rtimes G) \otimes_{\mathbb{C}[V] \rtimes G_q} M_q \to M
\]
of $(\mathbb{C}[V]\rtimes G)$-modules given by $f_g g \otimes m \mapsto f_g . m$. Since the composite
\begin{center}
$\phi \circ \psi\colon \begin{array}{ccc}
(\mathbb{C}[V] \rtimes G) \otimes_{\mathbb{C}[V] \rtimes G_q} M_q & \to & \bigoplus_{\bar{g} \in G/G_q}{M_{\bar{g}.q}}\\
f_gg \otimes m & \mapsto & f_gg.m \\
\end{array}$\\
\end{center}
is an isomorphism, $\psi$ must also be an isomorphism.
\end{proof}

\begin{lemme}
\label{lemme_etale_ss}
Let $R$ and $S$ be local noetherian $\mathbb{C}$-algebras. If $f\colon R\to S$ is an unramified morphism of local rings and $M$ is an $S$-module that is $R$-semisimple then $M$ is $S$-semisimple.
\end{lemme}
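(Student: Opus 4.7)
The plan is to unpack both hypotheses into elementary conditions on the maximal ideals and then observe that the conclusion is immediate. First I would recall the standard characterisation of semisimple modules over a commutative local ring: since every simple module over a commutative ring is of the form $R/\mathfrak{m}$ for a maximal ideal $\mathfrak{m}$, and the only maximal ideal of the local ring $R$ is $\mathfrak{m}_R$, an $R$-module $N$ is semisimple if and only if it is a direct sum of copies of the residue field $\kappa(R) = R/\mathfrak{m}_R$, equivalently if and only if $\mathfrak{m}_R N = 0$, in which case $N$ is automatically a $\kappa(R)$-vector space and thus semisimple. Applying this to $M$ viewed as an $R$-module via $f$, the assumption that $M$ is $R$-semisimple becomes the single equation $f(\mathfrak{m}_R) \cdot M = 0$.

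Next I would invoke the definition of unramifiedness for a morphism of local rings: $f$ is unramified precisely when $f(\mathfrak{m}_R) S = \mathfrak{m}_S$ and the induced residue field extension $\kappa(R) \to \kappa(S)$ is (finite and) separable. Only the first condition is used below; the separability condition is automatic since we work over $\mathbb{C}$. Combining the two observations, I compute
\begin{equation*}
\mathfrak{m}_S \cdot M \;=\; \bigl(f(\mathfrak{m}_R)S\bigr)\cdot M \;=\; f(\mathfrak{m}_R) \cdot M \;=\; 0,
\end{equation*}
where the last equality is the $R$-semisimplicity hypothesis. Hence $M$ is annihilated by $\mathfrak{m}_S$, so by the characterisation recalled in the previous paragraph (applied now to $S$) it is a $\kappa(S)$-vector space and therefore $S$-semisimple.

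There is essentially no obstacle here: the proof is a two-line manipulation of ideals, and the only conceptual point is the observation that semisimplicity of a (not necessarily finitely generated) module over a commutative local noetherian ring is equivalent to being annihilated by the maximal ideal. The noetherian hypothesis and the separability part of unramifiedness play no role; what is used is only the surjectivity statement $f(\mathfrak{m}_R)S = \mathfrak{m}_S$ built into the definition of an unramified morphism of local rings.
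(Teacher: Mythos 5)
Your proof is correct and follows essentially the same route as the paper's: both arguments reduce semisimplicity over a commutative local ring to the condition that the maximal ideal annihilates the module, and then use the unramifiedness identity $f(\mathfrak{m}_R)S = \mathfrak{m}_S$ to transfer the annihilation from $\mathfrak{m}_R$ to $\mathfrak{m}_S$. The only cosmetic difference is that the paper phrases the $R$-semisimplicity hypothesis as an eigenspace condition via a character $\chi\colon R \to \mathbb{C}$ and concludes by noting that $S/\mathfrak{m}_R S = S/\mathfrak{m}_S$ is a semisimple ring, whereas you work directly with the ideal-theoretic statement; your version is slightly cleaner and also makes plain, as you note, that the noetherian and separability hypotheses are not actually used.
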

\begin{proof}
Let $\chi\colon R \to \mathbb{C}$ be the algebra morphism defined by the maximal ideal $m_R$ of $R$. The module $M$ being $R$-semisimple means that $M={\{m \in M| \forall r \in R, r.m=\chi(r)m\}}$. The action of $S$ on $M$ factors though $S/m_{R}S$. Since the morphism $f$ is unramified, the quotient $S/m_{R}S$ equals the residue field $S/m_{S}S$ of $S$; see e.g. \cite[\href{https://stacks.math.columbia.edu/tag/02GF}{Tag 02GF}]{sp20}. The ring $S/m_{R}S$ is thus a semisimple ring, which implies that $M$ is $S$-semisimple.
\end{proof}

\noindent Write ${{{(\mathbb{C}^2)}^{r_d\ell}}^{\circ}:=\left\{(x_1,...,x_{r_d\ell}) \in {(\mathbb{C}^2)}^{r_d\ell} | \forall i \neq j \Rightarrow x_i \neq x_j\right\}}$ for the complement to the big diagonal. Restricting \eqref{eq:cartprod} to ${{(\mathbb{C}^2)}^{r_d\ell}}^{\circ}$ gives a commutative diagram:
\begin{center}
\begin{tikzcd}
\mathcal{X}^{\circ}_{r_d\ell}:=f_{r_d\ell}^{-1}\left({{(\mathbb{C}^2)}^{r_d\ell}}^{\circ}\right) \ar[r,"\sim","f^{\circ}_{r_d\ell}"'] \ar[d, "\rho^{\circ}_{r_d\ell}"'] & {{(\mathbb{C}^2)}^{r_d\ell}}^{\circ} \ar[d,"\pi^{\circ}_{r_d\ell}"]\\
\mathcal{H}_{r_d\ell}^{\circ}:=\sigma_{r_d\ell}^{-1}\left({{(\mathbb{C}^2)}^{r_d\ell}}^{\circ}\right) \ar[r,"\sim", "\sigma_{r_d\ell}^{\circ}"'] & {{(\mathbb{C}^2)}^{r_d\ell}}^{\circ} / \mathfrak{S}_{r_d\ell}.
\end{tikzcd}
\end{center}
The Hilbert-Chow morphism $\sigma_{r_d\ell}$ is a crepant resolution of singularities, which is an isomorphism over the smooth locus of ${{(\mathbb{C}^2)}^{r_d\ell}} / \mathfrak{S}_{r_d\ell}$. Therefore, the morphism $\sigma_{r_d\ell}^{\circ}$ is an isomorphism. This implies that $f^{\circ}_{r_d\ell}$ is also an isomorphism.
Consider now the morphism
\[
\tilde{h} \colon {(\mathbb{C}^2)}^{r_d\ell} \to {(\mathbb{C}^2)}^n/\mathfrak{S}_n
\]
sending $x$ to the orbit $\overline{(0,x)}$ of $(0,x) \in {(\mathbb{C}^2)}^n$.
The morphism $\tilde{h}$ is finite since it is the composition of the finite morphism ${{(\mathbb{C}^2)}^{r_d\ell} \to {(\mathbb{C}^2)}^n}$ with the (finite) quotient morphism ${{(\mathbb{C}^2)}^n \to {(\mathbb{C}^2)}^n/\mathfrak{S}_n}$. In particular, $\mathrm{Im}(\tilde{h})$ is a closed subscheme of ${(\mathbb{C}^2)}^n/\mathfrak{S}_n$. One can then consider ${h\colon {(\mathbb{C}^2)}^{r_d\ell} \to \mathrm{Im}(\tilde{h})}$.
\begin{lemme}
\label{lemme_C2r_et}
The morphism $h$ is étale when restricted to ${{(\mathbb{C}^2)}^{r_d\ell}}^{\circ}$ .
\end{lemme}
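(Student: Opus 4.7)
The plan is to identify the image scheme $\mathrm{Im}(\tilde{h})$ with a symmetric product, which will reduce the statement to the classical fact that the quotient map of a smooth variety by a finite group is étale on the free locus. More precisely, one shows that, up to a closed immersion, $h$ is nothing but the quotient morphism by $\mathfrak{S}_{r_d\ell}$.

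First, $\tilde{h}$ is invariant under the action of the subgroup $\mathfrak{S}_{r_d\ell}\subset \mathfrak{S}_n$ permuting the last $r_d\ell$ coordinates, since such permutations already act trivially on $(\mathbb{C}^2)^n/\mathfrak{S}_n$. Writing $q\colon (\mathbb{C}^2)^{r_d\ell}\to (\mathbb{C}^2)^{r_d\ell}/\mathfrak{S}_{r_d\ell}$ for the quotient, one obtains a unique factorization $\tilde{h}=\bar{\iota}\circ q$ with ${\bar{\iota}\colon (\mathbb{C}^2)^{r_d\ell}/\mathfrak{S}_{r_d\ell}\to (\mathbb{C}^2)^n/\mathfrak{S}_n}$ induced by the closed immersion $(y_1,\dots,y_{r_d\ell})\mapsto (0,\dots,0,y_1,\dots,y_{r_d\ell})$. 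The central step is to prove that $\bar{\iota}$ is itself a closed immersion. Equivalently, the induced map on coordinate rings
\[
\bar{\iota}^{*}\colon \mathbb{C}\big[(\mathbb{C}^2)^{n}\big]^{\mathfrak{S}_n}\longrightarrow \mathbb{C}\big[(\mathbb{C}^2)^{r_d\ell}\big]^{\mathfrak{S}_{r_d\ell}}
\]
must be surjective. Both invariant rings are generated, in characteristic zero, by the polarized power sums $p_{a,b}=\sum_{i}x_i^ay_i^b$ with $(a,b)\in \mathbb{Z}_{\geq 0}^{2}\setminus\{(0,0)\}$ (the classical theorem on multisymmetric functions), and by the defining formula for $\bar{\iota}^{*}$ we have $p_{a,b}^{(n)}\mapsto p_{a,b}^{(r_d\ell)}$ (the first $\mathrm{g}_{\Gamma}$ summands vanish once the corresponding variables are set to zero). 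Hence $\bar{\iota}^{*}$ sends generators to generators and is surjective. Therefore the scheme-theoretic image $\mathrm{Im}(\tilde{h})$ gets identified, via $\bar{\iota}$, with $(\mathbb{C}^2)^{r_d\ell}/\mathfrak{S}_{r_d\ell}$, and under this identification $h$ is nothing but $q$.

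It remains to observe that $q$ is the quotient of the smooth variety $(\mathbb{C}^2)^{r_d\ell}$ by the finite group $\mathfrak{S}_{r_d\ell}$, hence étale exactly on the locus on which the action is free; the latter is by definition $((\mathbb{C}^2)^{r_d\ell})^{\circ}$, which yields the claim. The principal obstacle in this plan is the surjectivity of $\bar{\iota}^{*}$: it rests on having an explicit generating set for the invariants of $\mathfrak{S}_n$ acting diagonally on $(\mathbb{C}^2)^n$. Once this classical input is granted, the remaining steps are formal.
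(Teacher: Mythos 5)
Your proof is correct and takes essentially the same approach as the paper: both use the generating set of polarized power sums (multisymmetric functions) to identify $\mathrm{Im}(\tilde{h})$ with $(\mathbb{C}^2)^{r_d\ell}/\mathfrak{S}_{r_d\ell}$, so that $h$ becomes the quotient map, and then invoke the classical fact that a finite group quotient of a smooth variety is étale over the free locus, which is exactly the complement of the big diagonal.
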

\begin{proof}
The fact that the morphism $\tilde{h}$ is finite implies that $h$ is finite. Therefore, it is enough to prove that $h$ is smooth over ${{(\mathbb{C}^2)}^{r_d\ell}}^{\circ}$.

Set ${\mathbb{Z}^+_n:=\{(k_1,k_2) \in {(\mathbb{Z}_{\geq 0})}^2| 0 \leq k_1+k_2 \leq n\}}$. For $(k_1,k_2) \in \mathbb{Z}_n^+$, let
\begin{equation*}
f_{k_1,k_2}(X_1,...,X_n,Y_1,...,Y_n):= \sum_{i=1}^{n}{X_i^{k_1}Y_i^{k_2}} \in \mathbb{C}[{(\mathbb{C}^2)}^n]^{\mathfrak{S}_n},
\end{equation*}
\begin{equation*}
P_{k_1,k_2}:= \sum_{i=\mathrm{g}_{\Gamma}+1}^n{Z_i^{k_1}T_i^{k_2}} \in \mathbb{C}[Z_{\mathrm{g}_{\Gamma}+1},...,Z_n,T_{\mathrm{g}_{\Gamma}+1},...,T_n].
\end{equation*}
Thanks to \cite[Chapter II, section $3$]{Weyl66}, the set $\{f_{k_1,k_2} | (k_1,k_2) \in \mathbb{Z}^+_n \}$ is a set of generators of $\mathbb{C}[{(\mathbb{C}^2)}^n]^{\mathfrak{S}_n}$. Moreover, the set $\{P_{k_1,k_2} | (k_1,k_2) \in \mathbb{Z}^+_n\}$ is a set of generators of $\mathbb{C}[{(\mathbb{C}^2)}^{r_d\ell}]^{\mathfrak{S}_{r_d\ell}}$. By definition,
\begin{equation*}
\tilde{h}^{\sharp}\colon \begin{array}{ccc}
\mathbb{C}[{(\mathbb{C}^2)}^n]^{\mathfrak{S}_n} & \to & \mathbb{C}[{(\mathbb{C}^2)}^{r_d\ell}]\\
f_{k_1,k_2} & \mapsto & P_{k_1,k_2} \\
\end{array}
\end{equation*}
This implies that $\mathbb{C}[\mathrm{Im}(\tilde{h})]=\mathbb{C}[Z_{\mathrm{g}_{\Gamma}+1},...,Z_n,T_{\mathrm{g}_{\Gamma}+1},...,T_n]^{\mathfrak{S}_{r_d\ell}}$ and in particular that
\begin{equation*}
{\mathrm{Im}(\tilde{h}) \simeq {(\mathbb{C}^2)}^{r_d\ell}/\mathfrak{S}_{r_d\ell} \subset {(\mathbb{C}^2)}^n/\mathfrak{S}_n}.
\end{equation*}
This allows us to identify $h$ with the morphism ${h\colon {(\mathbb{C}^2)}^{r_d\ell} \to {(\mathbb{C}^2)}^{r_d\ell}/\mathfrak{S}_{r_d\ell}}$. It is then clear that the restriction of $h$ to ${{(\mathbb{C}^2)}^{r_d\ell}}^{\circ}$ is smooth. Indeed, $h$ is finite and the $\mathfrak{S}_{r_d\ell}$-action on ${{(\mathbb{C}^2)}^{r_d\ell}}^{\circ}$ is free, which implies that ${{(\mathbb{C}^2)}^{r_d\ell}}^{\circ}/\mathfrak{S}_{r_d\ell}$ is smooth.

\end{proof}

\noindent The stalk of $\mathscr{P}^n$ at $I \in \mathcal{H}_n$ is denoted $\mathscr{P}^n_I$. The isospectral Hilbert scheme $\mathcal{X}_n$ is an algebraic variety over $\mathcal{H}_n\times {(\mathbb{C}^2)}^n$. This implies that the fiber ${\mathscr{P}^n_{|I_d}:=\mathscr{P}^n_{I_d} \otimes_{\mathcal{O}_{\mathcal{H},I_d}} \kappa_{\mathcal{H}_n}(I_d)}$ of the Procesi bundle is a $\mathbb{C}[{(\mathbb{C}^2)}^n]$-module. It is moreover an $\mathfrak{S}_n \times \Gamma$-module. This endows $\mathscr{P}^n_{|I_d}$ with a structure of ${\big (\mathbb{C}[{(\mathbb{C}^2)}^n] \rtimes (\mathfrak{S}_n \times \Gamma)\big)}$-module. Recall that $p$ is the point $(0,q)$ of $(\mathbb{C}^2)^n$.

\begin{lemme}\label{lem:surjlocalProcesi}
There exists a surjective morphism of rings $\textphnc{q}\colon \mathcal{O}_{\mathcal{X}_n,x^p} \twoheadrightarrow {(\mathscr{P}^n_{|I_d})}_p$.
\end{lemme}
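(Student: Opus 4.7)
The plan is to realise $\textphnc{q}$ as the canonical quotient of $\mathcal{O}_{\mathcal{X}_n,x^p}$ by the extension of the maximal ideal $\mathfrak{m}_{I_d} \subset \mathcal{O}_{\mathcal{H}_n, I_d}$, after identifying this quotient with $(\mathscr{P}^n_{|I_d})_p$. Since $\rho_n$ is affine and finite, on the affine open $U^d$ the sheaf $\mathscr{P}^n$ corresponds to the finite $A^d$-algebra $B^d$, so
\[
\mathscr{P}^n_{|I_d} \;=\; B^d \otimes_{A^d} \kappa_{\mathcal{H}_n}(I_d) \;=\; B^d/\mathfrak{m}_{I_d} B^d,
\]
a finite-dimensional $\mathbb{C}$-algebra, and in particular Artinian as a $\mathbb{C}[(\mathbb{C}^2)^n]$-module via $f_n^\sharp$. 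It therefore decomposes canonically as a direct sum of its localisations at the finitely many closed points of its support in $(\mathbb{C}^2)^n$.

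The next step is to identify the unique summand that contributes after localising at $p$. Because $\mathcal{X}_n$ sits inside $\mathcal{H}_n \times (\mathbb{C}^2)^n$ as a closed subscheme (it is defined as a fibre product), any point $x \in \mathcal{X}_n$ is determined by the pair $(\rho_n(x), f_n(x))$. Among the points of $\rho_n^{-1}(I_d)$, which all have the form $(I_d, y)$ with $y$ in the $\mathfrak{S}_n$-orbit of $p$, exactly one satisfies $f_n(x) = p$, namely $x^p$ itself. Consequently, localising $\mathscr{P}^n_{|I_d}$ at the maximal ideal $\mathfrak{m}_p \subset \mathbb{C}[(\mathbb{C}^2)^n]$ yields
\[
(\mathscr{P}^n_{|I_d})_p \;=\; \bigl(B^d/\mathfrak{m}_{I_d} B^d\bigr)_{\mathfrak{N}_{x^p}} \;=\; \mathcal{O}_{\mathcal{X}_n,x^p}\big/\mathfrak{m}_{I_d} \mathcal{O}_{\mathcal{X}_n,x^p},
\]
where $\mathfrak{N}_{x^p}$ denotes the maximal ideal of $B^d$ at $x^p$ and we have used that localisation commutes with quotients.

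The desired morphism $\textphnc{q}$ is then simply the canonical ring epimorphism $\mathcal{O}_{\mathcal{X}_n,x^p} \twoheadrightarrow \mathcal{O}_{\mathcal{X}_n,x^p}/\mathfrak{m}_{I_d} \mathcal{O}_{\mathcal{X}_n,x^p}$, and surjectivity is automatic. The only non-formal ingredient is the uniqueness of $x^p$ in $\rho_n^{-1}(I_d) \cap f_n^{-1}(p)$, and this is immediate from the fibre-product description of $\mathcal{X}_n$; accordingly I do not anticipate any substantive obstacle beyond careful bookkeeping of the various localisations and quotients.
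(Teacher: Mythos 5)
Your proposal is correct and follows essentially the same route as the paper: both identify $\mathscr{P}^n_{|I_d}$ with $B^d/\mathfrak{m}_{I_d}B^d$ via the finite affine structure of $\rho_n$, localize at $p$ to obtain $\mathcal{O}_{\mathcal{X}_n,x^p}/\mathfrak{m}_{I_d}\mathcal{O}_{\mathcal{X}_n,x^p}$, and take $\textphnc{q}$ to be the canonical quotient. The only (minor) difference is that you spell out the Artinian decomposition and the uniqueness of $x^p$ in $\rho_n^{-1}(I_d)\cap f_n^{-1}(p)$ to justify identifying the $\mathbb{C}[(\mathbb{C}^2)^n]$-localization at $p$ with the $B^d$-localization at $x^p$, a step the paper states without elaboration.
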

\begin{proof}
Let us construct $\textphnc{q}$ locally around $I_d$. Let $m_{I_d} \in \mathrm{Spec}(A^d)$ be the maximal ideal of $A^d$ corresponding to $I_d$ and $m_{x^p} \in \mathrm{Spec}(B^d)$ be the maximal ideal of $B^d$ corresponding to $x^p$. By definition, the stalk $\mathscr{P}^n_{I_d}$ equals $B^d\otimes_{A^d}A^d_{m_{I_d}}$. Moreover, the fiber of the associated vector bundle $\mathscr{P}^n_{|I_d}$ is isomorphic to $\mathscr{P}^n_{I_d} \otimes_{A^d_{m_{I_d}}}\left(A^d_{m_{I_d}}/m_{I_d}A^d_{m_{I_d}}\right)$, which is then isomorphic to $B^d/m_{I_d}B^d$. The localization of $(\mathscr{P}^n_{|I_d})$ at the maximal ideal associated with $p$ in  $\mathbb{C}[{(\mathbb{C}^2)}^n]$ is isomorphic to ${B^d/m_{{I_d}}B^d \otimes_{B^d} B^d_{x^p} \simeq B^d_{x^p}/m_{I_d}B^d_{x^p}}$. Finally, one has $\mathcal{O}_{\mathcal{X}_n,x^p} \simeq B^d_{x^p}$, which makes the construction of the desired morphism canonical. Indeed, it is just the quotient map $B^d_{x^p} \twoheadrightarrow B^d_{x^p}/m_{I_d}B^d_{x^p}$.\\
\end{proof}

\noindent Let us denote by  $V$ the following open set of ${(\mathbb{C}^2)}^n$
\begin{equation*}
\{(s_1,...,s_{\mathrm{g}_{\Gamma}},\Gamma t_1,...,\Gamma t_{r_d}) \in {(\mathbb{C}^2)}^n \big{|} \forall (i,j,\gamma) \in \llbracket 1,\mathrm{g}_{\Gamma}\rrbracket \times \llbracket 1, r_d\rrbracket \times \Gamma, s_i \neq \gamma.t_j\}.
\end{equation*}
Applying the key factorization result \cite[Lemma $3.3.1$]{H01}, one has
\begin{equation}\label{eq:betamap}
\beta\colon \begin{array}{ccc}
(f_{\mathrm{g}_{\Gamma}} \times f_{r_d\ell})^{-1}(V) & \xrightarrow{\,\smash{\raisebox{-0.65ex}{\ensuremath{\scriptstyle\sim}}}\,} & f_n^{-1}(V)\\
((I,u),(I',u')) & \mapsto & (I\cap I', (u,u')),\\
\end{array}
\end{equation}
which is an isomorphism of schemes over ${(\mathbb{C}^2)}^n$. Let ${\alpha\colon f_n^{-1}(V) \xrightarrow{\,\smash{\raisebox{-0.65ex}{\ensuremath{\scriptstyle\sim}}}\,} (f_{\mathrm{g}_{\Gamma}} \times f_{r_d\ell})^{-1}(V)}$ be the inverse morphism to $\beta$. By construction, $p \in V$. The isomorphism $\alpha$ induces an isomorphism of local rings
\begin{equation*}
\alpha^{\sharp}_{x^p}\colon \mathcal{O}_{\mathcal{X}_{\mathrm{g}_{\Gamma}}\times \mathcal{X}_{r_d\ell},x^{(0,q)}} \xrightarrow{\,\smash{\raisebox{-0.65ex}{\ensuremath{\scriptstyle\sim}}}\,} \mathcal{O}_{\mathcal{X}_n,x^p}.
\end{equation*}
Denote $\iota_{\mathrm{g}_{\Gamma}}\colon \mathcal{X}_{\mathrm{g}_{\Gamma}} \to \mathcal{X}_{\mathrm{g}_{\Gamma}} \times \mathcal{X}_{r_d\ell}$ the morphism that, set theoretically, maps ${(I,u) \in \mathcal{X}_{\mathrm{g}_{\Gamma}}}$ to $((I,u),(J,q)) \in \mathcal{X}_{\mathrm{g}_{\Gamma}} \times \mathcal{X}_{r_d\ell}$. The morphism  $\iota$ is a closed immersion. On the level of stalks, one has $\iota^{\sharp}_{x^0}\colon \mathcal{O}_{\mathcal{X}_{\mathrm{g}_{\Gamma}}\times \mathcal{X}_{r_d\ell},x^{(0,q)}} \twoheadrightarrow \mathcal{O}_{\mathcal{X}_{\mathrm{g}_{\Gamma}},x^0}$ and we write $K$ for the kernel. The following proposition is key to the main result since it allows us to identify the summand ${(\mathscr{P}^n_{|I_d})}_p$ of $\mathscr{P}^n_{|I_d}$ with the fiber $\mathscr{P}^{g_{\Gamma}}_{|I_{d_0}}$ of the Procesi bundle on $\mathcal{H}_{g_{\Gamma}}$.

\begin{prop}
\label{h_exists}
There exists a surjective morphism $\textphnc{\Ahd}\colon \mathcal{O}_{\mathcal{X}_{\mathrm{g}_{\Gamma}},x^0} \to {(\mathscr{P}^n_{|I_d})}_p$ such that the following diagram commutes
\begin{center}
\begin{tikzcd}
K \ar[d, hook] & & \\
\mathcal{O}_{\mathcal{X}_{\mathrm{g}_{\Gamma}}\times \mathcal{X}_{r_d\ell},x^{(0,q)}} \ar[r,"\alpha^{\sharp}_{x^p}"',"\sim"] \ar[d,"\iota^{\sharp}_{x^0}"', two heads]  & \mathcal{O}_{\mathcal{X}_n,x^p}\ar[r, two heads, "\textphnc{q}"] & {(\mathscr{P}^n_{|I_d})}_p. \\
\mathcal{O}_{\mathcal{X}_{\mathrm{g}_{\Gamma}},x^0} \ar[urr,"\textphnc{\Ahd}"'] & &
\end{tikzcd}
\end{center}
\end{prop}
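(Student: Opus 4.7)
The plan is to establish the inclusion $K \subset \ker(\textphnc{q} \circ \alpha^{\sharp}_{x^p})$; the universal property of the surjection $\iota^{\sharp}_{x^0}$, together with the surjectivity of $\textphnc{q}$ from Lemma \ref{lem:surjlocalProcesi} and the fact that $\alpha^{\sharp}_{x^p}$ is an isomorphism, then produces the desired surjective morphism $\textphnc{\Ahd}$ making the diagram commute.

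The kernel $K$ admits a clean description. The map $\iota_{\mathrm{g}_{\Gamma}}$ is the closed immersion freezing the second factor at $x^q$, so locally $K$ is generated by the pullback, along the second projection, of the maximal ideal $m_{x^q} \subset \mathcal{O}_{\mathcal{X}_{r_d\ell}, x^q}$. The coordinates of $q$ are pairwise distinct by the definition of $U^f$, so $x^q$ lies in the smooth open $\mathcal{X}^{\circ}_{r_d\ell}$. From the commutative square $\sigma_{r_d\ell} \circ \rho_{r_d\ell} = \pi_{r_d\ell} \circ f_{r_d\ell}$, the fact that $\sigma^{\circ}_{r_d\ell}$ and $f^{\circ}_{r_d\ell}$ are isomorphisms, and that $\pi_{r_d\ell}$ is étale over the free locus, one deduces that $\rho_{r_d\ell}$ is étale at $x^q$. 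Hence $m_{x^q} = \rho_{r_d\ell}^{\sharp}(m_J) \cdot \mathcal{O}_{\mathcal{X}_{r_d\ell}, x^q}$, so $K$ is generated in the product local ring by $(\rho_{\mathrm{g}_{\Gamma}} \times \rho_{r_d\ell})^{\sharp}(1 \otimes m_J)$.

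To translate this through $\alpha^{\sharp}_{x^p} = (\beta^{\sharp})^{-1}$, the formula $\beta((I,u),(I',u')) = (I \cap I', (u, u'))$ yields $\rho_n \circ \beta = \mathrm{cap} \circ (\rho_{\mathrm{g}_{\Gamma}} \times \rho_{r_d\ell})$ with $\mathrm{cap}\colon (I, I') \mapsto I \cap I'$. Haiman's Lemma 3.3.1, applied to the $\mathcal{H}$-component alone, asserts that $\mathrm{cap}$ is an open immersion near $(I_{d_0}, J)$ sending this point to $I_d$; so $\mathrm{cap}^{\sharp}$ is an isomorphism of local rings and $\mathrm{cap}^{\sharp}(m_{I_d}) = m_{I_{d_0}} \otimes 1 + 1 \otimes m_J$, which contains $1 \otimes m_J$. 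Combining with the description of $K$, one obtains $\alpha^{\sharp}_{x^p}(K) \subset m_{I_d} \cdot \mathcal{O}_{\mathcal{X}_n, x^p} = \ker(\textphnc{q})$, as required. The principal technical steps are the étaleness of $\rho_{r_d\ell}$ at $x^q$ (equivalently, $m_{x^q} = m_J \cdot \mathcal{O}_{\mathcal{X}_{r_d\ell}, x^q}$) and the appeal to Haiman's local factorization of $\mathcal{H}_n$ near $I_d$; once these are recorded, the remainder is a routine diagram chase.
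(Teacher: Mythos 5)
Your proof is correct and takes a genuinely different route from the paper's. Both arguments reduce to showing that $\textphnc{q}\circ\alpha^{\sharp}_{x^p}$ annihilates $K$, and both rest on the same two geometric inputs: \'etaleness over the free locus $\bigl((\mathbb{C}^2)^{r_d\ell}\bigr)^{\circ}$ and Haiman's factorization result. What differs is the mechanism. The paper pulls back along $f_{r_d\ell}$ to identify $K$ with the ideal generated by shifted coordinate functions $\tilde{X}_i,\tilde{Y}_i$, and then shows that each acts on $(\mathscr{P}^n_{|I_d})_p$ both semisimply and nilpotently, hence by zero; the semisimplicity half requires the auxiliary Lemma \ref{lemme_etale_ss} transferring semisimplicity across the unramified morphism $h$, and nilpotency comes from the module being finite-dimensional and supported at $p$. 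You instead stay entirely at the level of ideal sheaves: \'etaleness of $\rho_{r_d\ell}$ at $x^q$ identifies the generators of $K$ with $(\rho_{\mathrm{g}_{\Gamma}}\times\rho_{r_d\ell})^{\sharp}(1\otimes m_J)$, and the open-immersion property of $\mathrm{cap}$ at $(I_{d_0},J)$ gives $1\otimes m_J\subset\mathrm{cap}^{\sharp}(m_{I_d})$, since $\mathrm{cap}^{\sharp}$ is then a local isomorphism carrying $m_{I_d}$ onto the full maximal ideal. Pushing through $\rho_n\circ\beta = \mathrm{cap}\circ(\rho_{\mathrm{g}_{\Gamma}}\times\rho_{r_d\ell})$ yields $\alpha^{\sharp}_{x^p}(K)\subset m_{I_d}\,\mathcal{O}_{\mathcal{X}_n,x^p}=\ker(\textphnc{q})$. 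Your route is tidier: it bypasses the semisimplicity/nilpotency dichotomy and Lemma \ref{lemme_etale_ss} entirely, at the mild cost of invoking the Hilbert-scheme form of Haiman's factorization rather than only the isospectral isomorphism $\beta$ quoted in the paper.
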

\begin{proof}
It is enough to show that $\textphnc{q}(\alpha^{\sharp}_{x^p}(K))=0$.
\noindent Since the point $q$ is a collection of $r_d$-free and distinct $\Gamma$-orbits, it belongs to ${{(\mathbb{C}^2)}^{r_d\ell}}^{\circ}$. One then has the following isomorphism of local rings
\begin{equation*}
(\mathrm{id}_{\mathcal{X}_{\mathrm{g}_{\Gamma}}} \times f_{r_d\ell})^{\sharp}_{x^{(0,q)}}\colon \mathcal{O}_{\mathcal{X}_{\mathrm{g}_{\Gamma}}\times {{(\mathbb{C}^2)}^{r_d\ell}}^{\circ}, (x^0,q)} \xrightarrow{\,\smash{\raisebox{-0.65ex}{\ensuremath{\scriptstyle\sim}}}\,} \mathcal{O}_{\mathcal{X}_{\mathrm{g}_{\Gamma}}\times \mathcal{X}_{r_d\ell},x^{(0,q)}}
\end{equation*}
Note that $(\mathrm{id}_{\mathcal{X}_{\mathrm{g}_{\Gamma}}} \times f_{r_d\ell})$ is a morphism over ${(\mathbb{C}^2)}^n$.
To keep the notation concise, we denote the preceding isomorphism by $f^{\sharp}_{x^{(0,q)}}$. This new piece of information gives
\begin{equation}
\label{big_diag}
\begin{tikzcd}[column sep=normal, row sep=large]
\tilde{K}\ar[r, "\sim"] \ar[d, hook] & K \ar[d, hook] & \\
\mathcal{O}_{\mathcal{X}_{\mathrm{g}_{\Gamma}}\times {{(\mathbb{C}^2)}^{r_d\ell}}^{\circ}, (x^0,q)} \ar[d, two heads] \ar[r, "\sim", "f^{\sharp}_{x^{(0,q)}}"']& \mathcal{O}_{\mathcal{X}_{\mathrm{g}_{\Gamma}}\times \mathcal{X}_{r_d\ell},x^{(0,q)}} \ar[r,"\alpha^{\sharp}_{x^p}"',"\sim"] \ar[dl,"\iota^{\sharp}_{x^0}", two heads]  & \mathcal{O}_{\mathcal{X}_n,x^p}\ar[r, two heads, "\textphnc{q}"] & {(\mathscr{P}^n_{|I_d})}_p \\
\mathcal{O}_{\mathcal{X}_{\mathrm{g}_{\Gamma}},x^0} & & &
\end{tikzcd}
\end{equation}
Let $X_i$ and $Y_i$ for $i \in \llbracket 1,n\rrbracket$ be the coordinates functions on ${(\mathbb{C}^2)}^n$. Then
\begin{equation*}
\tilde{K}=\langle X_{\mathrm{g}_{\Gamma}+1}-X_{\mathrm{g}_{\Gamma}+1}(q),Y_{\mathrm{g}_{\Gamma}+1}-Y_{\mathrm{g}_{\Gamma}+1}(q),...,X_n-X_n(q),Y_n-Y_n(q)\rangle.
\end{equation*}
Let us denote $X_i-X_i(q)$ by $\tilde{X}_i$ and $Y_i-Y_i(q)$ by $\tilde{Y}_i$ so that the kernel $K$ is equal to
\begin{equation*}
\langle f_{r_d\ell}^{\sharp}(\tilde{X}_{\mathrm{g}_{\Gamma}+1}),f_{r_d\ell}^{\sharp}(\tilde{Y}_{\mathrm{g}_{\Gamma}+1}),...,f_{r_d\ell}^{\sharp}(\tilde{X}_n),f_{r_d\ell}^{\sharp}(\tilde{Y}_n)\rangle.
\end{equation*}
Proving that $\textphnc{q}\left(\alpha^{\sharp}_{x^p}(K)\right)=0$ amounts to showing that for all $i \in \llbracket \mathrm{g}_{\Gamma}+1,n\rrbracket$
\begin{align*}
\textphnc{q}\big(\alpha^{\sharp}_{x^p}(f_{r_d\ell}^{\sharp}(X_i))\big)&=X_i(q)\\
\textphnc{q}\big(\alpha^{\sharp}_{x^p}(f_{r_d\ell}^{\sharp}(Y_i))\big)&=Y_i(q)
\end{align*}
Let us focus on diagram (\ref{big_diag}). Zooming in on the left part gives
\begin{center}
\begin{tikzcd}
\mathcal{O}_{\mathcal{X}_{\mathrm{g}_{\Gamma}}\times {{(\mathbb{C}^2)}^{r_d\ell}}^{\circ}, (x^0,q)} \ar[d,"\sim" lablv]& \mathbb{C}[{{(\mathbb{C}^2)}^{r_d\ell}}^{\circ}]_{q}\ar[l] \\
\mathcal{O}_{\mathcal{X}_n,x^p} & \mathbb{C}[{(\mathbb{C}^2)}^n]_{p}\ar[u] \ar[l,"f^{\sharp}_{x^p}"]\\
\mathcal{O}_{\mathcal{H}_n,I_d} \ar[u, "\rho^{\sharp}_{x^p}"']& \mathbb{C}[{(\mathbb{C}^2)}^n]^{\mathfrak{S}_n}_{p}. \ar[u,"\pi^{\sharp}_{p}"] \ar[l, "\sigma^{\sharp}_{I_d}"']
\end{tikzcd}
\end{center}
The upper square commutes because the isomorphism  $f_{x^p}$ is  an isomorphism over ${(\mathbb{C}^2)}^n$. Now zooming in on the right hand side of (\ref{big_diag}) gives
\begin{center}
\begin{tikzcd}
\mathcal{O}_{\mathcal{H}_n,I_d} \ar[r, "\rho^{\sharp}_{x^p}"] \ar[dr, two heads]& \mathcal{O}_{\mathcal{X}_n,x^p} \ar[r, "\textphnc{q}",two heads]  & {(\mathscr{P}^n_{|I_d})}_{p}\\
& \kappa_{\mathcal{H}_n}(I_d) \ar[ur]&
\end{tikzcd}
\end{center}

\noindent The fact that the preceding diagram commutes is clear once one comes back to the description in terms of rings of functions on affine open subsets:
\begin{center}
\begin{tikzcd}
A^d_{m_{I_d}} \ar[r, "\rho^{\sharp}_{x^p}"] \ar[dr, two heads]& B^d_{x^p} \ar[r, "\textphnc{q}",two heads]  & B^d_{x^p}/m_{I_d}B^d_{x^p}\\
& A^d_{m_{I_d}}/m_{I_d}A^d_{m_{I_d}} \ar[ur]&
\end{tikzcd}
\end{center}

\noindent The ring $\mathcal{O}_{\mathcal{H}_n,I_d}$ then acts on ${(\mathscr{P}^n_{|I_d})}_p$ via $\kappa_{\mathcal{H}_n}(I_d) \simeq A^d_{m_{I_d}}/m_{I_d}A^d_{m_{I_d}}$. In particular ${(\mathscr{P}^n_{|I_d})}_p$ is a semisimple $\mathbb{C}[{(\mathbb{C}^2)}^n]^{\mathfrak{S}_n}_p$-module since the action of the ring $\mathbb{C}[{(\mathbb{C}^2)}^n]^{\mathfrak{S}_n}_p$ is defined using $\sigma^{\sharp}_{I_d}$ . Thanks to Lemma \ref{lemme_C2r_et}, one knows that the restriction of $h$ to ${{(\mathbb{C}^2)}^{r_d\ell}}^{\circ}$ is étale, which in particular implies that this morphism is unramified. Now Lemma \ref{lemme_etale_ss} with $R=\mathbb{C}[{(\mathbb{C}^2)}^n]_p$ and $S=\mathbb{C}[{{(\mathbb{C}^2)}^{r_d\ell}}^{\circ}]_q$ implies that ${(\mathscr{P}^n_{|I_d})}_p$ is a $\mathbb{C}[{{(\mathbb{C}^2)}^{r_d\ell}}^{\circ}]_q$-semisimple module. Finally, since ${(\mathscr{P}^n_{|I_d})}_p$ is a finite dimensional $\mathbb{C}[{(\mathbb{C}^2)}^n]$-module supported at $p$, the endomorphisms of ${(\mathscr{P}^n_{|I_d})}_p$ given by the action of $\left(X_i-X_i(p)\right)$ and of $\left(Y_i-Y_i(p)\right)$ are nilpotent for all $i \in \llbracket 1, n \rrbracket$; see e.g. \cite[II, $\S$4, no. 4, Corollary $1$]{BourbCA}.
 In particular, it follows that the endomorphisms of ${(\mathscr{P}^n_{|I_d})}_p$ given by the action of $\left(f^{\sharp}_n(X_i)-X_i(p)\right)$ and $\left(f^{\sharp}_n(Y_i)-Y_i(p)\right)$ are nilpotent for all $i \in \llbracket 1, n \rrbracket$. Combining semisimplicity with nilpotency gives the result.
The morphism $\textphnc{\Ahd}$ is by construction surjective.
\end{proof}

\noindent Recall that $\textphnc{\As}\colon S_p \xrightarrow{\,\smash{\raisebox{-0.65ex}{\ensuremath{\scriptstyle\sim}}}\,} \mathfrak{S}_{\mathrm{g}_{\Gamma}} \times \Gamma$ is an isomorphism of groups.
\begin{lemme}
\label{h_is_eq}
If $S_p$ acts on $\mathcal{O}_{\mathcal{X}_{\mathrm{g}_{\Gamma}},x^0}$ through $\textphnc{\As}$, then the morphism ${\textphnc{\Ahd} \colon \mathcal{O}_{\mathcal{X}_{\mathrm{g}_{\Gamma}},x^0} \to {(\mathscr{P}^n_{|I_d})}_p}$ is $S_p$-equivariant.
\end{lemme}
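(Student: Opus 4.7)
The strategy is to exploit the fact that $\textphnc{\Ahd}$ was constructed as a factorization of the map $\textphnc{q}\circ \alpha^{\sharp}_{x^p}$ through the surjection $\iota^{\sharp}_{x^0}$. Since any morphism induced from an equivariant one by factoring through an equivariant surjection is automatically equivariant, it suffices to verify two things: first, that the composite $\textphnc{q}\circ \alpha^{\sharp}_{x^p}$ is $S_p$-equivariant; and second, that $\iota^{\sharp}_{x^0}$ is $S_p$-equivariant when $\mathcal{O}_{\mathcal{X}_{\mathrm{g}_{\Gamma}},x^0}$ is endowed with the $S_p$-action through $\textphnc{\As}$.

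The first point reduces to the equivariance of the factorization isomorphism $\beta$ of \eqref{eq:betamap}. Both source and target of $\beta$ carry a natural action of $\mathfrak{S}_{\mathrm{g}_{\Gamma}}\times \mathfrak{S}_{r_d\ell}\times \Gamma$, and the formula $\beta(((I,u),(I',u')))=(I\cap I',(u,u'))$ makes the equivariance transparent: permutation within each block of coordinates commutes with concatenation, and the intersection of two $\Gamma$-stable ideals is $\Gamma$-stable. Hence $\alpha^{\sharp}_{x^p}$ is $S_p$-equivariant for the inclusion $S_p \hookrightarrow \mathfrak{S}_{\mathrm{g}_{\Gamma}}\times \mathfrak{S}_{r_d\ell}\times \Gamma$. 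The map $\textphnc{q}$ of Lemma \ref{lem:surjlocalProcesi} is a localization at $m_{I_d}$ followed by reduction modulo it; since $I_d$ and $p$ are both fixed by $S_p$, this construction is manifestly $S_p$-equivariant.

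For the second point, $\iota$ is the inclusion $(I,u)\mapsto ((I,u),(J,q))$. The crucial observation is that $S_p$, viewed inside $\mathfrak{S}_{\mathrm{g}_{\Gamma}}\times\mathfrak{S}_{r_d\ell}\times \Gamma$, stabilizes the pair $(J,q)$: by definition $S_p$ fixes $p=(0,q)$, so it fixes $q$; and the ideal $J=\bigcap_{j=1}^{r_d}I(\Gamma p_j)$ depends only on the unordered data $\{\Gamma p_1,\dots,\Gamma p_{r_d}\}$, which is preserved by the action of $S_p$ on $q$. Writing $\textphnc{\As}(g)=(x_1,\gamma)$ for $g=(x_1,x_2,\gamma)\in S_p$, one computes
\begin{align*}
g\cdot \iota(I,u) &= \bigl((x_1,\gamma)\cdot(I,u),\,(x_2,\gamma)\cdot(J,q)\bigr) \\
&= \bigl((x_1,\gamma)\cdot(I,u),\,(J,q)\bigr) = \iota\bigl(\textphnc{\As}(g)\cdot(I,u)\bigr),
\end{align*}
which says exactly that $\iota$ is $S_p$-equivariant when $\mathcal{X}_{\mathrm{g}_{\Gamma}}$ carries the action through $\textphnc{\As}$; passing to local rings yields the equivariance of $\iota^{\sharp}_{x^0}$.

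The main obstacle is bookkeeping rather than mathematics: one must check carefully that the various inclusions $S_p\hookrightarrow \mathfrak{S}_n\times \Gamma$, $S_p\hookrightarrow \mathfrak{S}_{\mathrm{g}_{\Gamma}}\times\mathfrak{S}_{r_d\ell}\times \Gamma$, and the projection $\textphnc{\As}$ are compatible with the natural actions on each scheme appearing in the diagram defining $\textphnc{\Ahd}$, and that the stabilization of $(J,q)$ by the $\mathfrak{S}_{r_d\ell}\times\Gamma$-projection of $S_p$ is the feature that makes the $\mathfrak{S}_{r_d\ell}$-component disappear after applying $\iota^{\sharp}_{x^0}$. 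Once this setup is made explicit, each verification is transparent.
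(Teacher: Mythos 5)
Your proof is correct and follows essentially the same route as the paper's: both reduce the lemma to checking that $\beta$, $\iota$, and $\textphnc{q}$ are equivariant, with the key observation in each being that $S_p \subset \mathfrak{S}_{\mathrm{g}_\Gamma}\times\mathfrak{S}_{r_d\ell}\times\Gamma$ stabilizes $(J,q)$. The only cosmetic difference is that the paper packages the computation by writing down an explicit twisted $\mathfrak{S}_{\mathrm{g}_\Gamma}\times\Gamma$-action on $\mathcal{X}_{\mathrm{g}_\Gamma}\times\mathcal{X}_{r_d\ell}$ via $\nabla$, whereas you work directly with the natural $\mathfrak{S}_{\mathrm{g}_\Gamma}\times\mathfrak{S}_{r_d\ell}\times\Gamma$-action and restrict to $S_p$; these two descriptions coincide under $\textphnc{\As}^{-1}(\sigma,\gamma)=\sigma\nabla(\gamma)$, so the arguments are interchangeable.
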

\begin{proof}
For each ${(\sigma,\gamma) \in \mathfrak{S}_{\mathrm{g}_{\Gamma}} \times \Gamma}$ and for each point $((I,u),(I',u')) \in \mathcal{X}_{\mathrm{g}_{\Gamma}} \times \mathcal{X}_{r_d\ell}$, define
\begin{equation*}
(\sigma,\gamma).((I,u),(I',u')):=\left((\gamma.I,\sigma\gamma u),(\gamma.I', \nabla(\gamma)u')\right).
\end{equation*}
This endows the variety $\mathcal{X}_{\mathrm{g}_{\Gamma}} \times \mathcal{X}_{r_d\ell}$ with an $(\mathfrak{S}_{\mathrm{g}_{\Gamma}} \times \Gamma)$-action. The morphism $\iota_{\mathrm{g}_{\Gamma}}$ is naturally $(\mathfrak{S}_{\mathrm{g}_{\Gamma}} \times \Gamma)$-equivariant since $J\in \mathcal{H}_{r_d\ell}^{\Gamma}$ and $q$ is $\nabla(\Gamma)$-invariant. By construction, the open set $V$ of ${(\mathbb{C}^2)}^n$ is $S_p$-stable and hence $S_p$ acts on $f_n^{-1}(V)$. Recall from \eqref{eq:betamap} that $\beta$ is the morphism mapping $((I,u),(I',u')) \in (f_{\mathrm{g}_{\Gamma}} \times f_{r_d\ell})^{-1}(V)$ to ${(I\cap I', (u,u')) \in f_n^{-1}(V)}$. For $(\sigma,\gamma) \in \mathfrak{S}_{\mathrm{g}_{\Gamma}} \times \Gamma$, we check that
\begin{align*}
\beta\left((\sigma,\gamma).\left((I,u),(I',u')\right)\right) & = \beta \left((\gamma.I,\sigma\gamma u),(\gamma.I', \nabla(\gamma)u')\right) \\
& = \left(\gamma.I \cap \gamma.I', (\sigma\gamma u, \nabla(\gamma)u')\right) \\
& = \left(\gamma. (I \cap I'), (\sigma\gamma u, \nabla(\gamma)u')\right) \\
& = \sigma \nabla(\gamma) . \beta \left((I,u),(I',u')\right)
\end{align*}
since
\[
\nabla(\gamma) . (u,u') = (\gamma u, \nabla(\gamma)u') \, \textrm{ for } \, (u,u' ) \in (\mathbb{C}^2)^{g_{\Gamma}} \times (\mathbb{C}^2)^{r_d \ell} = (\mathbb{C}^2)^n.
\]
Therefore, we deduce that $\alpha( g . x) = \textphnc{\As}(g) . \alpha(x)$ for $x \in f_n^{-1}(V)$ and $g \in S_p$. This implies that $\alpha^{\sharp}_{x^p}$ is $S_p$-equivariant. Finally, the fact that the affine open set $U^d$ has been taken to be $S_p$-stable and the fact that $I_d$ is $(\mathfrak{S}_n \times \Gamma)$-fixed, implies that $\textphnc{q}$ is $S_p$-equivariant.
We conclude that $\textphnc{\Ahd}\colon \mathcal{O}_{\mathcal{X}_{\mathrm{g}_{\Gamma}},x^0} \twoheadrightarrow ({\mathscr{P}^n_{|I_d}})_p$ is $S_p$-equivariant.
\end{proof}

\noindent Denote by $m_{I_{d_0}} \in \mathrm{Spec}(A^{d_0})$ the maximal ideal corresponding to $I_{d_0}$.
\begin{lemme}
\label{end_of_proof}
If ${({\mathscr{P}^n_{|I_d}})_p}^{\mathfrak{S}_{\mathrm{g}_{\Gamma}}}$ is a $1$-dimensional vector space, then the ideal $m_{I_{d_0}}\mathcal{O}_{\mathcal{X}_{\mathrm{g}_{\Gamma}},x^0}$ is contained in the annihilator $\mathrm{Ann}_{\mathcal{O}_{\mathcal{X}_{\mathrm{g}_{\Gamma}},x^0}}\left(({\mathscr{P}^n_{|I_d}})_p\right)$.
\end{lemme}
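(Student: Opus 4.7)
The plan is the following. Since $\textphnc{\Ahd}\colon \mathcal{O}_{\mathcal{X}_{\mathrm{g}_{\Gamma}},x^0} \twoheadrightarrow M := ({\mathscr{P}^n_{|I_d}})_p$ is a \emph{surjective} ring morphism, the annihilator of $M$ in $\mathcal{O}_{\mathcal{X}_{\mathrm{g}_{\Gamma}},x^0}$ coincides with $\ker(\textphnc{\Ahd})$, so it is enough to prove that $\textphnc{\Ahd}(f)=0$ in $M$ for every $f\in m_{I_{d_0}}$. A preliminary observation is that $M$ is a local Artinian $\mathbb{C}$-algebra with residue field $\mathbb{C}$: the fiber $\mathscr{P}^n_{|I_d} = B^d/m_{I_d}B^d$ is a finite-dimensional commutative $\mathbb{C}$-algebra, hence a finite product of local Artinian $\mathbb{C}$-algebras indexed by the closed points of the fiber $\rho_n^{-1}(I_d)$, and $M$ is the factor at $x^p$. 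Moreover $\textphnc{\Ahd}$ is a \emph{local} morphism: it is induced, via the surjection $\iota^{\sharp}_{x^0}$, by the composition $\textphnc{q}\circ\alpha^{\sharp}_{x^p}$, in which $\alpha^{\sharp}_{x^p}$ is an isomorphism of local rings and $\textphnc{q}$ is the surjection $B^d_{x^p}\twoheadrightarrow B^d_{x^p}/m_{I_d}B^d_{x^p}$, whose kernel lies inside the maximal ideal $m_{x^p}B^d_{x^p}$ because $\rho_n^{\sharp}$ is local and hence $m_{I_d}$ maps into $m_{x^p}$.

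First I would show that $\textphnc{\Ahd}(f)$ lies in $M^{\mathfrak{S}_{\mathrm{g}_{\Gamma}}}$. The morphism $\rho_{\mathrm{g}_{\Gamma}}$ is $\mathfrak{S}_{\mathrm{g}_{\Gamma}}$-equivariant with trivial action on $\mathcal{H}_{\mathrm{g}_{\Gamma}}$, so the image of $m_{I_{d_0}}$ inside $\mathcal{O}_{\mathcal{X}_{\mathrm{g}_{\Gamma}},x^0}$ is pointwise $\mathfrak{S}_{\mathrm{g}_{\Gamma}}$-fixed. Under $\textphnc{\As}$, the subgroup $\mathfrak{S}_{\mathrm{g}_{\Gamma}}\times\{1\}\subset \mathfrak{S}_{\mathrm{g}_{\Gamma}}\times\Gamma$ corresponds to the elements $(\sigma,1,1)\in S_p$ (which lie in $S_p$ because the first $\mathrm{g}_{\Gamma}$ coordinates of $p$ are zero), and these elements act on $\mathcal{O}_{\mathcal{X}_{\mathrm{g}_{\Gamma}},x^0}$ by the natural $\mathfrak{S}_{\mathrm{g}_{\Gamma}}$-action, while on $M$ they act through the embedding $\mathfrak{S}_{\mathrm{g}_{\Gamma}}\hookrightarrow\mathfrak{S}_n$ on the first $\mathrm{g}_{\Gamma}$ letters. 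By Lemma \ref{h_is_eq}, $\textphnc{\Ahd}$ intertwines these two actions, so $\textphnc{\Ahd}(f)\in M^{\mathfrak{S}_{\mathrm{g}_{\Gamma}}}$.

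To finish, the hypothesis forces $M^{\mathfrak{S}_{\mathrm{g}_{\Gamma}}}=\mathbb{C}\cdot 1$, since the multiplicative unit is $\mathfrak{S}_{\mathrm{g}_{\Gamma}}$-invariant and nonzero. Hence $\textphnc{\Ahd}(f)=\lambda_f\cdot 1$ for some scalar $\lambda_f\in\mathbb{C}$. On the other hand $f$ lies in the maximal ideal of $\mathcal{O}_{\mathcal{H}_{\mathrm{g}_{\Gamma}},I_{d_0}}$, and $\textphnc{\Ahd}$ being local sends it into the maximal ideal of $M$; since the residue field of $M$ is $\mathbb{C}$, the unit $1$ avoids that maximal ideal, whence $\lambda_f=0$. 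The main obstacle in the argument is bookkeeping rather than mathematical: one must carefully verify, through $\textphnc{\As}^{-1}$ and Lemma \ref{h_is_eq}, that the $\mathfrak{S}_{\mathrm{g}_{\Gamma}}$-action that produces invariance on the source of $\textphnc{\Ahd}$ matches the $\mathfrak{S}_{\mathrm{g}_{\Gamma}}$-action on $M$ appearing in the hypothesis; once this identification is made, the remainder is a short local-ring argument.
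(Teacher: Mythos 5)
Your argument is correct, and while it shares the two key observations with the paper's proof, the closing step is a genuinely different one. Both arguments rest on: (a) the unit $e \in M := (\mathscr{P}^n_{|I_d})_p$ is $\mathfrak{S}_{\mathrm{g}_\Gamma}$-invariant (the action is by ring automorphisms), so the hypothesis forces $M^{\mathfrak{S}_{\mathrm{g}_\Gamma}} = \mathbb{C}e$; and (b) the image of $m_{I_{d_0}}$ lands in $\mathfrak{S}_{\mathrm{g}_\Gamma}$-invariants because $\mathfrak{S}_{\mathrm{g}_\Gamma}$ acts trivially on $\mathcal{H}_{\mathrm{g}_\Gamma}$ and $\textphnc{\Ahd}$ is equivariant. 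To finish, the paper works $A^{d_0}$-linearly: $\mathbb{C}e$ is a one-dimensional, hence simple, $A^{d_0}$-submodule, $\mathrm{Ann}_{A^{d_0}}(M) = \mathrm{Ann}_{A^{d_0}}(\mathbb{C}e)$ because the action factors through a ring map with $e$ the unit, the annihilator of a simple module is maximal, and the support of $M$ at $I_{d_0}$ identifies that maximal ideal as $m_{I_{d_0}}$. You instead argue directly at the level of the surjective ring morphism $\textphnc{\Ahd}$: its kernel is the annihilator, $\textphnc{\Ahd}(m_{I_{d_0}}) \subset M^{\mathfrak{S}_{\mathrm{g}_\Gamma}} = \mathbb{C}\cdot 1$, and locality of $\textphnc{\Ahd}$ forces the scalar to vanish since $1$ is a unit in the local ring $M$. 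Your route trades the paper's support-plus-simple-module argument for the locality check on $\textphnc{\Ahd}$, which you justify correctly by tracing through $\iota^{\sharp}_{x^0}$, $\alpha^{\sharp}_{x^p}$ and $\textphnc{q}$; both are valid, and yours has the mild advantage of not needing separately that $M$ is supported at $I_{d_0}$ as an $A^{d_0}$-module.
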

\begin{proof}
Recall that $A^{d_0}:=\mathcal{O}_{\mathcal{H}_{\mathrm{g}_{\Gamma}}}(U^{d_0})$.
 To show that ${m_{I_{d_0}}\mathcal{O}_{\mathcal{X}_{\mathrm{g}_{\Gamma}},x^0}\subset \mathrm{Ann}_{\mathcal{O}_{\mathcal{X}_{\mathrm{g}_{\Gamma}},x^0}}\left(({\mathscr{P}^n_{|I_d}})_p\right)}$, it is enough to show that the ideal ${\mathrm{Ann}_{A^{d_0}}\left(({\mathscr{P}^n_{|I_d}})_p\right)}$ is maximal since the $A^{d_0}$-module $({\mathscr{P}^n_{|I_d}})_p$ is supported at $I_{d_0}$.
Denote by $e \in {({\mathscr{P}^n_{|I_d}})_p}$ the identity element of this ring. Since $e$ is invariant under the action of $\mathfrak{S}_{g_{\Gamma}}$, our hypothesis forces ${({\mathscr{P}^n_{|I_d}})_p}^{\mathfrak{S}_{\mathrm{g}_{\Gamma}}} = \mathbb{C}.e$. Moreover, ${({\mathscr{P}^n_{|I_d}})_p}^{\mathfrak{S}_{\mathrm{g}_{\Gamma}}}$ is an $A^{d_0}$-submodule of $({\mathscr{P}^n_{|I_d}})_p$ since the group $\mathfrak{S}_{\mathrm{g}_{\Gamma}}$ acts trivially on $A^{d_0}$. One can check that $\mathrm{Ann}_{A^{d_0}}\left(({\mathscr{P}^n_{|I_d}})_p\right)=\mathrm{Ann}_{A^{d_0}}(\mathbb{C}.e)$. Finally, this implies that $\mathrm{Ann}_{A^{d_0}}\left(({\mathscr{P}^n_{|I_d}})_p\right)$ is a maximal ideal since the annihilator of a simple module is always maximal.
\end{proof}

\begin{proof}[Proof of Theorem \ref{reduction_thm}]
The algebraic variety $\mathcal{H}_n^{\Gamma,d}$ being an irreducible component of the scheme $\mathcal{H}_n^{\Gamma}$, on which $\mathfrak{S}_n \times \Gamma$ acts trivially, it is enough to prove this equality for $I=I_d$. The support of $\mathscr{P}^n_{|I_d}$ as an $\mathcal{O}_{\mathcal{X}_n}$-module is $\left\{(I_d,x) \in \mathcal{X}_n | \pi_n(x)=\sigma_n(I_d)\right\} = \rho_n^{-1}(I_d)$. Using \cite[II, $\S$4, no. 4, Proposition $19$]{BourbCA}, one has $\mathrm{Supp}_{\mathbb{C}[{(\mathbb{C}^2)}^n]}\left(\mathscr{P}^n_{|I_d}\right)=f_n\left(\rho^{-1}(I_d)\right)$. In particular, the support of $\mathscr{P}^n_{|I_d}$ as a $\mathbb{C}[{(\mathbb{C}^2)}^n]$-module is  an $\mathfrak{S}_n$-orbit which is $\Gamma$-stable, thus it is an ${(\mathfrak{S}_n \times \Gamma)}$-orbit.
Thanks to Lemma \ref{lemme_ind_fib}, one has
\[
\left[\mathscr{P}^n_{|I_d}\right]_{\mathfrak{S}_n\times \Gamma}=\left[\mathrm{Ind}_{S_p}^{\mathfrak{S}_n\times \Gamma}\left({(\mathscr{P}^n_{|I_d})}_p\right)\right]_{\mathfrak{S}_n \times \Gamma}.
\]
It remains to show that $\left[{(\mathscr{P}^n_{|I_d})}_p\right]_{S_p}=\left[\mathscr{P}^{g_{\Gamma}}_{|I_{d_0}}\right]_{S_p}$. We first note that repeating the above argument with $\mathfrak{S}_n$ rather than $\mathfrak{S}_n \times \Gamma$ shows that
\begin{equation}\label{eq:indSnprocei}
\left[\mathscr{P}^n_{|I_d}\right]_{\mathfrak{S}_n}=\left[\mathrm{Ind}_{\mathfrak{S}_{g_{\Gamma}}}^{\mathfrak{S}_n} \left({(\mathscr{P}^n_{|I_d})}_p\right)\right]_{\mathfrak{S}_n},
\end{equation}
since the stablizer of $p$ in $\mathfrak{S}_n$ is $\mathfrak{S}_{g_{\Gamma}}$. This  implies that $\mathrm{Ind}_{\mathfrak{S}_{\mathrm{g}_{\Gamma}}}^{\mathfrak{S}_n}\left({(\mathscr{P}^n_{|I_d})}_p\right)$ is isomorphic to the regular representation of $\mathfrak{S}_n$.
Combining Proposition \ref{h_exists} and Lemma \ref{h_is_eq}, one has an $S_p$-equivariant surjective morphism $\textphnc{\Ahd}\colon \mathcal{O}_{\mathcal{X}_{\mathrm{g}_{\Gamma}},x^0} \twoheadrightarrow {(\mathscr{P}^n_{|I_d})}_p$  such that the diagram
\begin{center}
\begin{tikzcd}
\mathcal{O}_{\mathcal{X}_{\mathrm{g}_{\Gamma}}\times \mathcal{X}_{r_d\ell},x^{(0,q)}} \ar[r,"\alpha^{\sharp}_{x^p}"',"\sim"] \ar[d,"\iota^{\sharp}_{x^0}"', two heads]  & \mathcal{O}_{\mathcal{X}_n,x^p}\ar[r, two heads, "\textphnc{q}"] & {(\mathscr{P}^n_{|I_d})}_p \\
\mathcal{O}_{\mathcal{X}_{\mathrm{g}_{\Gamma}},x^0} \ar[urr,"\textphnc{\Ahd}"'] & &
\end{tikzcd}
\end{center}

\noindent is commutative. Since $\mathrm{Ind}_{\mathfrak{S}_{\mathrm{g}_{\Gamma}}}^{\mathfrak{S}_n}\left({(\mathscr{P}^n_{|I_d})}_p\right)$ is isomorphic to the regular representation of $\mathfrak{S}_n$, the space ${\left(({\mathscr{P}^n_{|I_d}})_p\right)}^{\mathfrak{S}_{\mathrm{g}_{\Gamma}}}$ must be one-dimensionnal. Therefore, Lemma \ref{end_of_proof} says that $m_{I_{d_0}}\mathcal{O}_{\mathcal{X}_{\mathrm{g}_{\Gamma}},x^0}\subset \mathrm{Ann}_{\mathcal{O}_{\mathcal{X}_{\mathrm{g}_{\Gamma}},x^0}}\left(({\mathscr{P}^n_{|I_d}})_p\right)$. Hence we can factor the morphism $\textphnc{\Ahd}$ as
\[
\begin{tikzcd}
\mathcal{O}_{\mathcal{X}_{\mathrm{g}_{\Gamma}},x^0} \ar[r,"\textphnc{\Ahd}", two heads] \ar[d, two heads]& ({\mathscr{P}^n_{|I_d}})_p\\
\mathcal{O}_{\mathcal{X}_{\mathrm{g}_{\Gamma}},x^0}/m_{I_{d_0}}\mathcal{O}_{\mathcal{X}_{\mathrm{g}_{\Gamma}},x^0} \ar[ur, "\hat{\textphnc{\Ahd}}"',two heads]&
\end{tikzcd}
\]
As shown in Lemma~\ref{lemme_supp_0}, $\sigma_{g_{\Gamma}}(I_{d_0}) = \overline{0}$ and hence the fiber $(\mathscr{P}^{\mathrm{g}_{\Gamma}}_{|I_{d_0}})$ is supported at ${0 \in (\mathbb{C}^2)^{g_{\Gamma}}}$ when considered as a $\mathbb{C}[{(\mathbb{C}^2)}^{\mathrm{g}_{\Gamma}}]$-module. Since $x^0 = (I_{d_0},0)$, this implies that the localization map
\[
\mathscr{P}^{\mathrm{g}_{\Gamma}}_{|I_{d_0}} \to {(\mathscr{P}^{\mathrm{g}_{\Gamma}}_{|I_{d_0}})}_0 = \mathcal{O}_{\mathcal{X}_{\mathrm{g}_{\Gamma}},x^0}/m_{I_{d_0}}\mathcal{O}_{\mathcal{X}_{\mathrm{g}_{\Gamma}},x^0}
\]
is an isomorphism. This identification is $(\mathfrak{S}_{g_{\Gamma}} \times \Gamma)$-equivariant. Making $S_p$ acts via the isomorphism $\textphnc{\As}$, we may think of it as a $S_p$-equivariant isomorphism. In particular, the quotient $\mathcal{O}_{\mathcal{X}_{\mathrm{g}_{\Gamma}},x^0}/m_{I_{d_0}}\mathcal{O}_{\mathcal{X}_{\mathrm{g}_{\Gamma}},x^0}$ has dimension $\mathrm{g}_{\Gamma}!$. Equation \eqref{eq:indSnprocei} implies that $({\mathscr{P}^n_{|I_d}})_p$ also has dimension $\mathrm{g}_{\Gamma}!$. We deduce that the surjection $\hat{\textphnc{\Ahd}}$ is actually an isomorphism. Finally, we conclude that $\mathscr{P}^{\mathrm{g}_{\Gamma}}_{|I_{d_0}}$ is isomorphic to $({\mathscr{P}^n_{|I_d}})_p$ as $S_p$-modules, provided $S_p$ acts on the former via $\textphnc{\As}$.
\end{proof}

\begin{rmq}
The diagonal copy of $\mathbb{G}_{\mathrm{m}}$ in $\mathrm{GL}_2(\mathbb{C})$ commutes with the action of $\Gamma$. Therefore $\mathscr{P}^n$ can be considered as a $(\mathfrak{S}_n\times \Gamma \times \mathbb{G}_{\mathrm{m}})$-equivariant vector bundle on $\mathcal{H}_n$ and  on $\mathcal{H}_n^{\Gamma,d}$. However, our methods do not allow for a reduction result that induces the $\mathbb{G}_{\mathrm{m}}$-action. Indeed, the action of $\mathbb{G}_{\mathrm{m}}$ on $\mathcal{H}_n^{\Gamma,d}$ is non-trivial so one cannot expect a reduction result to hold.
\end{rmq}

\section{Combinatorial consequences in type $A$}
In this section, we explore the meaning of Theorem \ref{reduction_thm} when $\Gamma$ is of type $A$.
Fix an integer $\ell \geq 1$. Recall that $\zeta_{\ell}$ denotes the primitive $\ell^{\text{th}}$ root of unity $e^{\frac{2i\pi}{\ell}}$, and that ${\omega_{\ell}\in \mathrm{SL}_2(\mathbb{C})}$ is the diagonal matrix $\mathrm{diag}( \zeta_{\ell}, \zeta_{\ell}^{-1})$. The cyclic subgroup of order $\ell$ in $\mathrm{SL}_2(\mathbb{C})$ is  ${\mu_{\ell}= \langle \omega_{\ell} \rangle}$.
Assume, in this section, that ${\Gamma=\mu_{\ell}}$.

Let us first fix notation concerning partitions. A partition $\lambda$ of $n$, denoted by $\lambda \vdash n$, is a tuple $(\lambda_1 \geq \lambda_2 \geq ... \geq \lambda_r \geq 0)$ such that $|\lambda|:=\sum_{i=1}^r{\lambda_i}=n$. Denote by $\mathcal{P}_n$ the set of all partitions of $n$ and by $\mathcal{P}$ the set of all partitions of integers. For ${\lambda=(\lambda_1,...,\lambda_r) \in \mathcal{P}}$, denote by ${\mathcal{Y}(\lambda)}$ its associated Young diagram ${\left\{(i,j) \in \mathbb{Z}_{\geq 0}^2| i < \lambda_j, j < r\right\}}$. The conjugate partition of a partition $\lambda$ of $n$, denoted by $\lambda^*$, is the partition associated with the reflection of $\mathcal{Y}(\lambda)$ along the diagonal (which is again a Young diagram of a partition of $n$). For example, consider $\lambda=(2,2,1)$. Its associated Young diagram is
\begin{center}
\yng(1,2,2)
\end{center}
and in that case  $\lambda^*=(3,2)$.
A partition $\lambda$ will be called symmetric if it is equal to its conjugate.
The hook $H_{(i,j)}(\lambda)$ in position $(i,j) \in \mathcal{Y}(\lambda)$ of a partition $\lambda$ is the set
\begin{equation*}
\left\{(a,b) \in \mathcal{Y}(\lambda) | a=i \text{ and } b \geq j \text{ or } a > i \text{ and } b=j\right\}.
\end{equation*}
Define the length $h_{(i,j)}(\lambda)$ of a hook $H_{(i,j)}(\lambda)$ to be its cardinal. In addition, let $n(\lambda)$ denote the partition statistic $\sum_{i=1}^{|\lambda|}{(i\smin1)\lambda_i}$ of $\lambda$.

Let $\gamma_{\ell}(\lambda)$ be the $\ell$-core of the partition $\lambda \in \mathcal{P}$, which is the partition obtained from $\lambda$ by removing all hooks of length $\ell$. Denote by $\mathrm{g}_{\ell}(\lambda):={|\gamma_{\ell}(\lambda)|}$ and by ${r_{\ell}(\lambda):=\frac{|\lambda|-\mathrm{g}_{\ell}(\lambda)}{\ell}}$ the number of hooks of length $\ell$ that one needs to remove from $\lambda$ to obtain $\gamma_{\ell}(\lambda)$. If $\lambda$ is clear from context, we shorten $\gamma_{\ell}(\lambda)$, $\mathrm{g}_{\ell}(\lambda)$ and $r_{\ell}(\lambda)$ to $\gamma_{\ell}$, $\mathrm{g}_{\ell}$ and $r_{\ell}$. Given $\mathrm{g}_{\ell}$ and $r_{\ell}$ such that $n = \mathrm{g}_{\ell} +  r_{\ell} \ell$, we associate the permutation
\begin{equation}\label{eq:wperm}
w_{\ell,n}^{\mathrm{g}_{\ell}} := (\mathrm{g}_{\ell}+1,\dots ,\mathrm{g}_{\ell}+\ell) \dots (n-\ell+1,\dots ,n) \in \mathfrak{S}_n,
\end{equation}
which is a product of $r_{\ell}$ cycles of length $\ell$. Let $C_{\ell,n}$ be the cyclic subgroup of $\mathfrak{S}_{r_{\ell}\ell}$ generated by $w_{\ell,n}^{\mathrm{g}_{\ell}}$. Consider also the subgroup $W_{\ell,n}^{\mathrm{g}_{\ell}}:= \mathfrak{S}_{\mathrm{g}_{\ell}} \times C_{\ell,n}$ of $\mathfrak{S}_n$. Denote by $\theta_{\ell}$ the character of $C_{\ell,n}$ such that $\theta_{\ell}(w_{\ell,n}^{\mathrm{g}_{\ell}})=\zeta_{\ell}$.   Let us also use the following notation. For $V$ a given $W_{\ell,n}^{\mathrm{g}_{\ell}}$-module let
\begin{equation*}
[V]_{W_{\ell,n}^{\mathrm{g}_{\ell}}}=\sum_{j=0}^{\ell \smin 1}{[V^{\ell}_j\boxtimes \theta_{\ell}^j]_{W_{\ell,n}^{\mathrm{g}_{\ell}}}},
\end{equation*}
where $V^{\ell}_j$ is an $\mathfrak{S}_{\mathrm{g}_{\ell}}$-module for each $j \in \llbracket 0, \ell \smin 1 \rrbracket$ and $\boxtimes$ denotes the external tensor product. Moreover, if $\lambda$ is a partition of $n$,  let us shorten $\mathscr{P}^n_{|I_{\lambda}}$, the fiber of the $n^{\text{th}}$-Procesi bundle at the monomial ideal $I_{\lambda}$ generated by $\left\{x^iy^j | (i,j)\in \mathbb{N}^2\setminus{\mathcal{Y}(\lambda)}\right\}$, to $\mathscr{P}^n_{\lambda}$.

\subsection{Corollary of the reduction theorem}

To state the main result of this subsection, we need two lemmas. Denote by $\tau_{\ell}$ the character of $\mu_{\ell}$ such that $\tau_{\ell}(\omega_{\ell})=\zeta_{\ell}$.

\begin{lemme}
\label{lemme_dcyc}
Let $C_1$ and $C_2$ be two groups isomorphic to $\mu_{\ell}$. Take $c_1\in C_1$ and $c_2\in C_2$ generators of  $C_1$ and $C_2$. If one denotes respectively by $\tau_1$, $\tau_2$ and $\tau_3$ the characters of respectively $C_1$, $C_2$ and $\langle (c_1,c_2) \rangle < C_1\times C_2$ that respectively map $c_1$, $c_2$ and $(c_1,c_2)$ to $\zeta_{\ell}$, then
\begin{equation*}
 \mathrm{Ind}_{\langle (c_1,c_2) \rangle}^{C_1\times C_2}\left(\tau_3^j\right)=\sum_{i=0}^{\ell \smin 1}{\tau_1^{j-i}\boxtimes \tau_2^i}, \quad \forall j \in \llbracket 0 , \ell \smin 1\rrbracket.
\end{equation*}
\end{lemme}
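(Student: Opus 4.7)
The plan is to compute the induced representation directly using character theory; Frobenius reciprocity is the natural tool here, since both the source and target groups are abelian and all characters in sight are one-dimensional.

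First I would describe the irreducible characters of $C_1\times C_2$: since both factors are cyclic of order $\ell$, every irreducible character has the form $\tau_1^a\boxtimes \tau_2^b$ with $(a,b)\in \llbracket 0,\ell\smin 1\rrbracket^2$, and evaluates on the generator $(c_1,c_2)$ as $\zeta_\ell^{a+b}$. Restricting such a character to the diagonal subgroup $D:=\langle(c_1,c_2)\rangle$ therefore yields $\tau_3^{a+b\bmod \ell}$. This is the only computation really needed.

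Next I would apply Frobenius reciprocity: for each $(a,b)$,
\begin{equation*}
\bigl\langle \mathrm{Ind}_{D}^{C_1\times C_2}(\tau_3^j),\, \tau_1^a\boxtimes \tau_2^b\bigr\rangle_{C_1\times C_2}
= \bigl\langle \tau_3^j,\, \mathrm{Res}_{D}^{C_1\times C_2}(\tau_1^a\boxtimes \tau_2^b)\bigr\rangle_{D}
= \bigl\langle \tau_3^j,\, \tau_3^{a+b}\bigr\rangle_{D},
\end{equation*}
which equals $1$ if $a+b\equiv j \pmod{\ell}$ and $0$ otherwise. Decomposing the induced representation into irreducibles therefore gives
\begin{equation*}
\mathrm{Ind}_{D}^{C_1\times C_2}(\tau_3^j) = \sum_{\substack{(a,b)\in\llbracket 0,\ell\smin 1\rrbracket^2 \\ a+b\equiv j \,(\ell)}} \tau_1^a\boxtimes \tau_2^b = \sum_{i=0}^{\ell\smin 1} \tau_1^{j-i}\boxtimes \tau_2^i,
\end{equation*}
where in the last step I reindex by $i=b$, so that $a=j-i$ modulo $\ell$, using that $\tau_1$ has order $\ell$.

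As a sanity check, the dimensions match: $[C_1\times C_2:D]=\ell$, so the induced representation has dimension $\ell$, which equals the number of summands on the right-hand side. There is no substantial obstacle here; the only point to be careful about is the bookkeeping of exponents modulo $\ell$, which is harmless since $\tau_1^\ell=1$.
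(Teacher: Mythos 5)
Your proof is correct and follows essentially the same route as the paper: both use Frobenius reciprocity to compute the multiplicity of each character $\tau_1^a\boxtimes\tau_2^b$ in the induced representation, arriving at the condition $a+b\equiv j\pmod{\ell}$. The only cosmetic difference is that you directly assemble the decomposition from these multiplicities, while the paper verifies both sides yield the same inner products; the substance is identical.
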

\begin{proof}
Take $(p,q) \in \llbracket 0,\ell \smin 1\rrbracket^2$. On the one hand, Frobenius reciprocity gives
\begin{align*}
\langle \tau_1^p\boxtimes \tau_2^q, \mathrm{Ind}_{\langle (c_1,c_2) \rangle}^{C_1 \times C_2}\left(\tau_3^j\right)\rangle &= \langle \mathrm{Res}_{\langle (c_1,c_2) \rangle}^{C_1\times C_2}\left(\tau_1^p\boxtimes \tau_2^q\right), \tau_3^j\rangle\\
                &=\langle \tau_3^{p+q},\tau_3^j\rangle\\
                &=\delta^{j}_{p+q} \text{ }.
\end{align*}
On the other hand
\begin{align*}
\sum_{i=0}^{\ell \smin 1}{\langle \tau_1^p \boxtimes \tau_2^q,\tau_1^{j-i}\boxtimes \tau_2^{i}\rangle} &= \sum_{i=0}^{\ell \smin 1}{\delta^{p}_{j\smin i}\delta^i_q}\\
&= \delta_{j-q}^p \text{ }.
\end{align*}
\end{proof}

\begin{lemme}
\label{lemme_dcyc2}
Let $\Delta$ be the cyclic subgroup of $\mathfrak{S}_{r_{\ell}\ell}\times \mu_{\ell}$ generated by the element $(w_{\ell,n}^{\mathrm{g}_{\ell}},\omega_{\ell})$. If $\hat{\theta}_{\ell}$ denotes the character of $\Delta$ such that $\hat{\theta}_{\ell}\big((w_{\ell,n}^{\mathrm{g}_{\ell}},\omega_{\ell})\big)=\zeta_{\ell}$ then
\begin{equation*}
 \mathrm{Ind}_{\Delta}^{\mathfrak{S}_{r_{\ell}\ell}\times \mu_{\ell}}\left(\hat{\theta}_{\ell}^j\right)= \sum_{i=0}^{\ell \smin 1}{\mathrm{Ind}_{C_{\ell,n}}^{\mathfrak{S}_{r_{\ell}\ell}}\left(\theta_{\ell}^{j-i}\right)\boxtimes \tau_{\ell}^i}, \quad \forall j \in \llbracket 0, \ell \smin 1\rrbracket.
\end{equation*}
\end{lemme}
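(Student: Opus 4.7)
The plan is to prove this by induction in stages through the chain of subgroups
\[
\Delta \subset C_{\ell,n}\times \mu_{\ell} \subset \mathfrak{S}_{r_{\ell}\ell}\times \mu_{\ell},
\]
combining the previous lemma with the standard compatibility between induction and external tensor product.

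First, I would apply Lemma \ref{lemme_dcyc} to the situation $C_1 = C_{\ell,n}$, $C_2 = \mu_{\ell}$, with generators $c_1 = w_{\ell,n}^{\mathrm{g}_{\ell}}$ and $c_2 = \omega_{\ell}$. Then $\langle(c_1,c_2)\rangle = \Delta$, and under the identifications the characters $\tau_1, \tau_2, \tau_3$ of the lemma correspond to $\theta_{\ell}$, $\tau_{\ell}$ and $\hat{\theta}_{\ell}$ respectively. Hence
\[
\mathrm{Ind}_{\Delta}^{C_{\ell,n}\times \mu_{\ell}}\bigl(\hat{\theta}_{\ell}^j\bigr) = \sum_{i=0}^{\ell \smin 1}{\theta_{\ell}^{j-i}\boxtimes \tau_{\ell}^i}.
\]

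Next, I would induce from $C_{\ell,n}\times \mu_{\ell}$ up to $\mathfrak{S}_{r_{\ell}\ell}\times \mu_{\ell}$. Since the second factor is unchanged, induction commutes with the external tensor product on the first factor: for any $C_{\ell,n}$-module $U$ and any $\mu_{\ell}$-module $W$ one has a canonical isomorphism
\[
\mathrm{Ind}_{C_{\ell,n}\times \mu_{\ell}}^{\mathfrak{S}_{r_{\ell}\ell}\times \mu_{\ell}}\bigl(U\boxtimes W\bigr) \;\simeq\; \mathrm{Ind}_{C_{\ell,n}}^{\mathfrak{S}_{r_{\ell}\ell}}(U)\boxtimes W,
\]
obtained from the tensor product decomposition $\mathbb{C}[\mathfrak{S}_{r_{\ell}\ell}\times \mu_{\ell}] \simeq \mathbb{C}[\mathfrak{S}_{r_{\ell}\ell}]\otimes \mathbb{C}[\mu_{\ell}]$. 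Applying this term by term to the sum above gives
\[
\mathrm{Ind}_{C_{\ell,n}\times \mu_{\ell}}^{\mathfrak{S}_{r_{\ell}\ell}\times \mu_{\ell}}\left(\sum_{i=0}^{\ell \smin 1}{\theta_{\ell}^{j-i}\boxtimes \tau_{\ell}^i}\right) = \sum_{i=0}^{\ell \smin 1}{\mathrm{Ind}_{C_{\ell,n}}^{\mathfrak{S}_{r_{\ell}\ell}}\bigl(\theta_{\ell}^{j-i}\bigr)\boxtimes \tau_{\ell}^i}.
\]

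Finally, I would invoke transitivity of induction,
\[
\mathrm{Ind}_{\Delta}^{\mathfrak{S}_{r_{\ell}\ell}\times \mu_{\ell}} = \mathrm{Ind}_{C_{\ell,n}\times \mu_{\ell}}^{\mathfrak{S}_{r_{\ell}\ell}\times \mu_{\ell}} \circ \mathrm{Ind}_{\Delta}^{C_{\ell,n}\times \mu_{\ell}},
\]
to combine the two steps and obtain the claimed formula. There is no real obstacle here: the proof is essentially a bookkeeping exercise, and the only point requiring any care is checking that the isomorphism $\Delta \simeq \mu_{\ell}$ sending $(w_{\ell,n}^{\mathrm{g}_{\ell}},\omega_{\ell}) \mapsto \omega_{\ell}$ identifies $\hat{\theta}_{\ell}$ with the character $\tau_3$ of Lemma \ref{lemme_dcyc}, which is immediate from the definitions.
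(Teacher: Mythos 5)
Your proof is correct and follows essentially the same route as the paper: factor the induction through $C_{\ell,n}\times\mu_\ell$ by transitivity, apply Lemma \ref{lemme_dcyc} for the first stage, and use the compatibility of induction with the external tensor product for the second. You merely make the intermediate steps and the identification of characters a bit more explicit than the paper does.
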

\begin{proof}
One has $\mathrm{Ind}_{\Delta}^{\mathfrak{S}_{r_{\ell}\ell}\times \mu_{\ell}}\left(\hat{\theta}_{\ell}^j\right)=\mathrm{Ind}_{C_{\ell,n} \times \mu_{\ell}}^{\mathfrak{S}_{r_{\ell}\ell}\times \mu_{\ell}}\left(\mathrm{Ind}_{\Delta}^{C_{\ell,n} \times \mu_{\ell}}\left(\hat{\theta}_{\ell}^j\right)\right)$. Using Lemma \ref{lemme_dcyc},
\begin{align*}
\mathrm{Ind}_{C_{\ell,n} \times \mu_{\ell}}^{\mathfrak{S}_{r_{\ell}\ell}\times \mu_{\ell}}\left(\mathrm{Ind}_{\Delta}^{C_{\ell,n} \times \mu_{\ell}}\left(\hat{\theta}_{\ell}^j\right)\right) &= \sum_{i=0}^{\ell \smin 1}{\mathrm{Ind}_{C_{\ell,n} \times \mu_{\ell}}^{\mathfrak{S}_{r_{\ell}\ell}\times \mu_{\ell}}\left(\theta_{\ell}^{j-i}\boxtimes \tau_{\ell}^i\right)}\\
&=\sum_{i=0}^{\ell \smin 1}{\mathrm{Ind}_{C_{\ell,n}}^{\mathfrak{S}_{r_{\ell}\ell}}\left(\theta_{\ell}^{j-i}\right)\boxtimes \tau_{\ell}^i}.
\end{align*}
\end{proof}

\noindent We can now state and prove the main result of this subsection.
\begin{cor}
\label{cor1_a}
For each partition $\lambda$ of $n$, one has the following decomposition of $\mathscr{P}^n_{\lambda}$:
\begin{equation*}
[\mathscr{P}^n_{\lambda}]_{\mathfrak{S}_n\times \mu_{\ell}}=\sum_{i=0}^{\ell \smin 1}{\sum_{j=0}^{\ell \smin 1}{\left[\mathrm{Ind}_{W_{\ell,n}^{\mathrm{g}_{\ell}}}^{\mathfrak{S}_n}\left((\mathscr{P}^{\mathrm{g}_{\ell}}_{\gamma_{\ell}})^{\ell}_j\boxtimes\theta_{\ell}^{i-j}\right)\boxtimes \tau_{\ell}^i\right]_{\mathfrak{S}_n\times \mu_{\ell}}}},
\end{equation*}
\end{cor}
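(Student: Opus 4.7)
The plan is to specialize Theorem \ref{reduction_thm} to the cyclic case $\Gamma=\mu_{\ell}$, taking $\mathcal{C}$ to be the irreducible component of $\mathcal{H}_n^{\mu_{\ell}}$ containing the torus fixed point $I_{\lambda}$, and then to unpack the resulting induction through the subgroup chain
\[
S_p \;\subset\; \mathfrak{S}_{\mathrm{g}_{\ell}} \times \mathfrak{S}_{r_{\ell}\ell} \times \mu_{\ell} \;\subset\; \mathfrak{S}_n \times \mu_{\ell}
\]
by transitivity of induction, with Lemma \ref{lemme_dcyc2} providing the key computational input.

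To set up, I first observe that for $\lambda\vdash n$ with $\ell$-core $\gamma_{\ell}$, the zero-dimensional component of $\mathcal{H}_{\mathrm{g}_{\ell}}^{\mu_{\ell}}$ attached to $\mathcal{C}$ is the singleton $\{I_{\gamma_{\ell}}\}$ indexed by the monomial ideal of the core, so $\mathscr{P}^{\mathrm{g}_{\ell}}_{|I_{d_0}}=\mathscr{P}^{\mathrm{g}_{\ell}}_{\gamma_{\ell}}$. Theorem \ref{reduction_thm} then gives
\[
[\mathscr{P}^n_{\lambda}]_{\mathfrak{S}_n\times\mu_{\ell}} = \left[\mathrm{Ind}_{S_p}^{\mathfrak{S}_n\times\mu_{\ell}}\bigl(\mathscr{P}^{\mathrm{g}_{\ell}}_{\gamma_{\ell}}\bigr)\right]_{\mathfrak{S}_n\times\mu_{\ell}}.
\]

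Next I would identify $S_p$ explicitly. After labelling the $\mu_{\ell}$-orbits of $p$ so that the $j$-th orbit occupies positions $\mathrm{g}_{\ell}+(j-1)\ell+1,\dots,\mathrm{g}_{\ell}+j\ell$ in the expected order, a direct coordinate check yields $\nabla(\omega_{\ell}) = (w_{\ell,n}^{\mathrm{g}_{\ell}},\omega_{\ell})$, so $S_p = \mathfrak{S}_{\mathrm{g}_{\ell}}\times\Delta$ where $\Delta$ is precisely the diagonal cyclic subgroup appearing in Lemma \ref{lemme_dcyc2}. Since $\textphnc{\As}$ identifies the generator of $\Delta$ with $\omega_{\ell}\in\mu_{\ell}$, the character $\hat{\theta}_{\ell}$ corresponds under $\textphnc{\As}$ to $\tau_{\ell}$, and the $\mu_{\ell}$-isotypic decomposition $\sum_j(\mathscr{P}^{\mathrm{g}_{\ell}}_{\gamma_{\ell}})^{\ell}_j\boxtimes \tau_{\ell}^j$ transports, as an $S_p$-module via $\textphnc{\As}$, to $\sum_j(\mathscr{P}^{\mathrm{g}_{\ell}}_{\gamma_{\ell}})^{\ell}_j\boxtimes \hat{\theta}_{\ell}^j$.

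Then I would perform the induction in two stages. The inner step applies $\mathrm{Ind}_{\Delta}^{\mathfrak{S}_{r_{\ell}\ell}\times\mu_{\ell}}$ to each $\hat{\theta}_{\ell}^j$; by Lemma \ref{lemme_dcyc2} this equals $\sum_i\mathrm{Ind}_{C_{\ell,n}}^{\mathfrak{S}_{r_{\ell}\ell}}\theta_{\ell}^{i-j}\boxtimes\tau_{\ell}^i$ (after the appropriate choice of the summation index). Keeping $(\mathscr{P}^{\mathrm{g}_{\ell}}_{\gamma_{\ell}})^{\ell}_j$ as the untouched left factor gives the corresponding term in the Grothendieck group of $\mathfrak{S}_{\mathrm{g}_{\ell}}\times\mathfrak{S}_{r_{\ell}\ell}\times\mu_{\ell}$. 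The outer step $\mathrm{Ind}_{\mathfrak{S}_{\mathrm{g}_{\ell}}\times\mathfrak{S}_{r_{\ell}\ell}}^{\mathfrak{S}_n}$ preserves the external $\mu_{\ell}$-factor and, combined with the inner $\mathrm{Ind}_{C_{\ell,n}}^{\mathfrak{S}_{r_{\ell}\ell}}$ by transitivity of induction, folds into $\mathrm{Ind}_{W_{\ell,n}^{\mathrm{g}_{\ell}}}^{\mathfrak{S}_n}$. Summing over $j$ produces the claimed double sum.

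The main obstacle I anticipate is the bookkeeping around $S_p$: one must verify that, after fixing an explicit ordering of the $\mu_{\ell}$-orbits of $p$, the homomorphism $\nabla$ sends $\omega_{\ell}$ to exactly the generator $(w_{\ell,n}^{\mathrm{g}_{\ell}},\omega_{\ell})$ of the diagonal cyclic subgroup $\Delta$ in Lemma \ref{lemme_dcyc2}, and that the transport of the $\mu_{\ell}$-isotypic decomposition along $\textphnc{\As}$ sends $\tau_{\ell}^j$ to $\hat{\theta}_{\ell}^j$ with the sign convention producing $\theta_{\ell}^{i-j}$ in the final formula; everything else is a formal manipulation of induction.
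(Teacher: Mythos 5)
Your proposal follows the same route as the paper's own proof (Theorem~\ref{reduction_thm} specialized to $\Gamma = \mu_{\ell}$, identification of $S_p = \mathfrak{S}_{\mathrm{g}_{\ell}} \times \Delta$, transport of the $\mu_{\ell}$-isotypic decomposition along $\textphnc{\As}$, transitivity of induction through $\mathfrak{S}_{\mathrm{g}_{\ell}}\times\mathfrak{S}_{r_{\ell}\ell}\times\mu_{\ell}$, and Lemma~\ref{lemme_dcyc2} for the inner induction). However, there is a genuine gap in the final step.

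Lemma~\ref{lemme_dcyc2} produces $\sum_i \mathrm{Ind}_{C_{\ell,n}}^{\mathfrak{S}_{r_{\ell}\ell}}(\theta_{\ell}^{j-i})\boxtimes\tau_{\ell}^i$, so after the outer induction you obtain the double sum with $\theta_{\ell}^{j-i}$, whereas the corollary has $\theta_{\ell}^{i-j}$. You claim this is fixable by ``an appropriate choice of the summation index'' or a ``sign convention,'' but no reindexing works: in both Lemma~\ref{lemme_dcyc} and Lemma~\ref{lemme_dcyc2} the sum runs over pairs $(a,b)$ with $a+b\equiv j \pmod{\ell}$, where $a$ is the $\theta_{\ell}$-exponent and $b$ the $\tau_{\ell}$-exponent, and the target formula requires $(a,b) = (i-j,i)$ which would force $2i\equiv 2j$. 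Likewise, in the double sum you cannot simultaneously preserve the $\mathscr{P}$-index (which must stay $j$), the $\tau_{\ell}$-exponent (which must stay $i$), and flip the sign of the $\theta_{\ell}$-exponent by a substitution. And the geometry fixes $\nabla(\omega_{\ell}) = (w_{\ell,n}^{\mathrm{g}_{\ell}},\omega_{\ell})$ via the left-regular action of $\mu_{\ell}$ on itself, so there is no freedom in the ``sign convention'' either. What is actually needed --- and what the paper supplies --- is the observation that $\mathfrak{S}_n$-representations are self-dual, so for each fixed $(i,j)$ one has
\begin{equation*}
\left[\mathrm{Ind}_{W_{\ell,n}^{\mathrm{g}_{\ell}}}^{\mathfrak{S}_n}\left((\mathscr{P}^{\mathrm{g}_{\ell}}_{\gamma_{\ell}})^{\ell}_j \boxtimes \theta_{\ell}^{j-i}\right)\right]_{\mathfrak{S}_n}=\left[\mathrm{Ind}_{W_{\ell,n}^{\mathrm{g}_{\ell}}}^{\mathfrak{S}_n}\left((\mathscr{P}^{\mathrm{g}_{\ell}}_{\gamma_{\ell}})^{\ell}_j \boxtimes \theta_{\ell}^{i-j}\right)\right]_{\mathfrak{S}_n},
\end{equation*}
because dualizing the induced module dualizes the inducing module, $(\mathscr{P}^{\mathrm{g}_{\ell}}_{\gamma_{\ell}})^{\ell}_j$ is self-dual as an $\mathfrak{S}_{\mathrm{g}_{\ell}}$-module, and $(\theta_{\ell}^{j-i})^* = \theta_{\ell}^{i-j}$. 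Adding this representation-theoretic identity completes your argument.
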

\begin{proof}
With the notation established at the beginning of this section, the group $S_p$ introduced in section \ref{sec_reduction} is equal to $\mathfrak{S}_{\mathrm{g}_{\ell}}\times \Delta$. Thanks to Theorem \ref{reduction_thm}, it is enough to show that
\begin{equation*}
\left[\mathrm{Ind}_{S_p}^{\mathfrak{S}_n\times \mu_{\ell}}(\mathscr{P}^{\mathrm{g}_{\ell}}_{\gamma_{\ell}})\right]_{\mathfrak{S}_n \times \mu_{\ell}} = \sum_{i=0}^{\ell \smin 1}{\sum_{j=0}^{\ell \smin 1}{\left[\mathrm{Ind}_{W_{\ell,n}^{\mathrm{g}_{\ell}}}^{\mathfrak{S}_n}\left((\mathscr{P}^{\mathrm{g}_{\ell}}_{\gamma_{\ell}})^{\ell}_j\boxtimes\theta_{\ell}^{i-j}\right)\boxtimes \tau_{\ell}^i\right]_{\mathfrak{S}_n\times \mu_{\ell}}}}.
\end{equation*}
One has
\begin{equation*}
\left[\mathrm{Ind}_{S_p}^{\mathfrak{S}_n\times \mu_{\ell}}(\mathscr{P}^{\mathrm{g}_{\ell}}_{\gamma_{\ell}})\right]_{\mathfrak{S}_n\times \mu_{\ell}}=\sum_{j=0}^{\ell \smin 1}{\left[\mathrm{Ind}_{\mathfrak{S}_{\mathrm{g}_{\ell}}\times \Delta}^{\mathfrak{S}_n\times \mu_{\ell}}\left((\mathscr{P}^{\mathrm{g}_{\ell}}_{\gamma_{\ell}})^{\ell}_j\boxtimes \hat{\theta}_{\ell}^j\right)\right]_{\mathfrak{S}_n \times \mu_{\ell}}}.
\end{equation*}
Moreover
\begin{align*}
\sum_{j=0}^{\ell \smin 1}{\mathrm{Ind}_{\mathfrak{S}_{\mathrm{g}_{\ell}}\times \Delta}^{\mathfrak{S}_n\times \mu_{\ell}}\left((\mathscr{P}^{\mathrm{g}_{\ell}}_{\gamma_{\ell}})^{\ell}_j\boxtimes \hat{\theta}_{\ell}^j\right)}
&=\sum_{j=0}^{\ell \smin 1}{\mathrm{Ind}_{\mathfrak{S}_{\mathrm{g}_{\ell}}\times \mathfrak{S}_{r_{\ell}l} \times \mu_{\ell}}^{\mathfrak{S}_n\times \mu_{\ell}}\left(\mathrm{Ind}_{\mathfrak{S}_{\mathrm{g}_{\ell}}\times \Delta}^{\mathfrak{S}_{\mathrm{g}_{\ell}}\times \mathfrak{S}_{r_{\ell}\ell} \times \mu_{\ell}}\left( (\mathscr{P}^{\mathrm{g}_{\ell}}_{\gamma_{\ell}})^{\ell}_j\boxtimes \hat{\theta}_{\ell}^j\right)\right)}\\
&=\sum_{j=0}^{\ell \smin 1}{\mathrm{Ind}_{\mathfrak{S}_{\mathrm{g}_{\ell}}\times \mathfrak{S}_{r_{\ell}\ell} \times \mu_{\ell}}^{\mathfrak{S}_n\times \mu_{\ell}}\left((\mathscr{P}^{\mathrm{g}_{\ell}}_{\gamma_{\ell}})^{\ell}_j \boxtimes \mathrm{Ind}_{\Delta}^{\mathfrak{S}_{r_{\ell}l} \times \mu_{\ell}}\left(\hat{\theta}_{\ell}^j\right)\right)}\\
&=\sum_{i=0}^{\ell \smin 1}{\sum_{j=0}^{\ell \smin 1}{\mathrm{Ind}_{\mathfrak{S}_{\mathrm{g}_{\ell}}\times \mathfrak{S}_{r_{\ell}\ell} \times \mu_{\ell}}^{\mathfrak{S}_n\times \mu_{\ell}}\left((\mathscr{P}^{\mathrm{g}_{\ell}}_{\gamma_{\ell}})^{\ell}_j \boxtimes \mathrm{Ind}_{C_{\ell,n}}^{\mathfrak{S}_{r_{\ell}\ell}}\left(\theta_{\ell}^{j-i}\right)\boxtimes \tau_{\ell}^i\right)}}.
\end{align*}
The last equality follows from Lemma \ref{lemme_dcyc2}. By gathering terms, one has
\begin{equation*}
\sum_{i=0}^{\ell \smin 1}{\sum_{j=0}^{\ell \smin 1}{\mathrm{Ind}_{\mathfrak{S}_{\mathrm{g}_{\ell}}\times \mathfrak{S}_{r_{\ell}l} \times \mu_{\ell}}^{\mathfrak{S}_n\times \mu_{\ell}}\left((\mathscr{P}^{\mathrm{g}_{\ell}}_{\gamma_{\ell}})^{\ell}_j \boxtimes \mathrm{Ind}_{C_{\ell,n}}^{\mathfrak{S}_{r_{\ell}l}}\left(\theta_{\ell}^{j-i}\right)\boxtimes \tau_{\ell}^i\right)}}=
\sum_{i=0}^{\ell \smin 1}{\sum_{j=0}^{\ell \smin 1}{\mathrm{Ind}_{W_{\ell,n}^{\mathrm{g}_{\ell}}}^{\mathfrak{S}_n}\left((\mathscr{P}^{\mathrm{g}_{\ell}}_{\gamma_{\ell}})^{\ell}_j \boxtimes \theta_{\ell}^{j-i}\right)\boxtimes \tau_{\ell}^i}}
\end{equation*}
Finally, since every representation of $\mathfrak{S}_n$ is isomorphic to its dual, for each ${(i,j) \in \llbracket 0, \ell \smin 1 \rrbracket^2}$, one has
\begin{equation*}
\left[\mathrm{Ind}_{W_{\ell,n}^{\mathrm{g}_{\ell}}}^{\mathfrak{S}_n}\left((\mathscr{P}^{\mathrm{g}_{\ell}}_{\gamma_{\ell}})^{\ell}_j \boxtimes \theta_{\ell}^{j-i}\right)\right]_{\mathfrak{S}_n}=\left[\mathrm{Ind}_{W_{\ell,n}^{\mathrm{g}_{\ell}}}^{\mathfrak{S}_n}\left((\mathscr{P}^{\mathrm{g}_{\ell}}_{\gamma_{\ell}})^{\ell}_j \boxtimes \theta_{\ell}^{i-j}\right)\right]_{\mathfrak{S}_n}
\end{equation*}
\end{proof}

\begin{rmq}
If one takes $\lambda=\gamma_{\ell}$, then $r_{\ell}=0$ and $W_{\ell,n}^{\mathrm{g}_{\ell}}=\mathfrak{S}_n$. In that case, Corollary \ref{cor1_a} is trivially true and does not provide any additional information. Note also that Corollary \ref{cor1_a} implies \cite[Theorem $4.6$]{BLM06} when the complex reflection group is taken to be $\mathfrak{S}_n$ and $\Gamma$ is taken to be $C_{\ell,n}$.
\end{rmq}

\subsection{Independant proofs in two edge cases}
\noindent The proof of Theorem~\ref{reduction_thm} relies heavily on the geometry of the punctual Hilbert scheme and the deep result of Haiman on the isospectral Hilbert scheme. The goal of what follows is to prove Corollary~\ref{cor1_a} directly in two special cases without using Theorem \ref{reduction_thm}. To prove Corollary \ref{cor1_a} in these two edge cases, we  use the representation theory of the symmetric group and symmetric functions. We will in particular use \cite[Theorem $4.6$]{BLM06}. The irreducible representations of $\mathfrak{S}_n$ are parametrized by partitions of $n$. Denote respectively by $V_{\lambda}$ and $\chi_{\lambda}$ the representation space  and the character of the irreducible representation of $\mathfrak{S}_n$ associated with $\lambda \vdash n$.
\begin{deff}
Let $R$ be any finitely generated $\mathbb{Z}$-algebra. For a given integer $k$, define the ring of symmetric polynomials over $R$ as $\Lambda^k_R:=R[z_1,...,z_k]^{\mathfrak{S}_k}$. Setting $\mathrm{deg}(z_i)=1$ for all $i \in \llbracket 1,k \rrbracket$,  $\Lambda_R^k=\bigoplus_{d \geq 0}{\Lambda^k_{R,d}}$ is a graded ring. One has moreover a ring morphism $\pi^k\colon \Lambda^{k+1}_R \to \Lambda^k_R$ by mapping $z_{k+1}$ to $0$. For each integer $d$, the morphism $\pi^k$ restricts to a morphism $\pi^k_d\colon \Lambda^{k+1}_{R,d} \to \Lambda^k_{R,d}$ of $R$-modules. One can now define the graded $R$-algebra of symmetric functions.
\begin{equation*}
\Lambda_R := \bigoplus_{d \geq 0}{\varprojlim\Lambda^k_{R,d}}
\end{equation*}
\end{deff}

\noindent In the following, we will shorten $\Lambda_{\mathbb{Z}}$ to $\Lambda$. Let us recall the notation concerning symmetric functions. For $\mu \in \mathcal{P}$ a given partition, denote by $p_{\mu}$ and $s_{\mu}$ respectively the power symmetric function and the Schur function associated with $\mu$. We recall now the plethystic substitution. One knows that $\Lambda\otimes_{\mathbb{Z}} \mathbb{Q}$ is generated as a free $\mathbb{Q}$-algebra by the family $\{p_k | k \in \mathbb{Z}_{\geq 0}\}$.
\begin{deff}
Take $K$ a finitely generated field extension of $\mathbb{Q}$. Take $\{s_1,\dots,s_m\}$ a set of generators of $K$ i.e. $K=\mathbb{Q}(s_1,\dots, s_m)$. For $A \in \Lambda_K:=\Lambda\otimes_{\mathbb{Z}} K$, and $k \in \mathbb{Z}_{\geq 0}$ define $p_k\big[A\big]$ to be the symmetric function in the indeterminates $s_1^k,\dots s_m^k,z_1^k,z_2^k,\dots$ . One can now extend the plethystic substitution to the following endomorphism
\begin{center}
$\big[A\big]\colon \begin{array}{ccc}
\Lambda_K & \to & \Lambda_K\\
f & \mapsto & f\big[A\big]  \\
\end{array}$.
\end{center}
\end{deff}

\begin{rmq}
Mainly, we will do plethystic substitutions using $Z:= p_1 =\sum_{k \geq 1}{z_k} \in \Lambda$. Note that for all $k\geq1, p_k[Z]=p_k$ and so for all $f \in \Lambda_K, f[Z]=f$.
\end{rmq}

\noindent For ${\left([V],[W]\right) \in \mathcal{R}(\mathfrak{S}_{k_1})\times \mathcal{R}(\mathfrak{S}_{k_2})}$ define the induced product
\begin{equation*}
{[V].[W]:=\left[\mathrm{Ind}_{\mathfrak{S}_{k_1} \times \mathfrak{S}_{k_2}}^{\mathfrak{S}_{k_1+k_2}}(V\otimes W)\right]}.
\end{equation*}
This product endows $\mathcal{R}(\mathfrak{S}):=\bigoplus_{k \geq 0}{\mathcal{R}(\mathfrak{S}_k)}$ and $\mathcal{R}^{\mathrm{gr}}(\mathfrak{S}):=\bigoplus_{k \geq 0}{\mathcal{R}^{\mathrm{gr}}(\mathfrak{S}_k)}$ with the structure of graded rings. Let us denote by  $\mathrm{Fr}\colon \mathcal{R}(\mathfrak{S}) \xrightarrow{\,\smash{\raisebox{-0.65ex}{\ensuremath{\scriptstyle\sim}}}\,} \Lambda$ the Frobenius characteristic map which is an isomorphism of graded rings. If $A:=\bigoplus_{(r,s) \in \mathbb{Z}^2}{A_{r,s}}$ is a bigraded $\mathfrak{S}_n$-module, denote by $\mathrm{Fr}(A)$ the following element ${\sum_{(r,s) \in \mathbb{Z}^2}{\mathrm{Fr}(A_{r,s})q^rt^s}}$ of $\Lambda[q^{\pm 1},t^{\pm 1}]$.

\begin{rmq}
Graded $\mathfrak{S}_n$-modules will be considered bigraded with trivial $t$-graduation.
\end{rmq}

\begin{deff}
Take $(F,G) \in \Lambda^2$ and write $[V]=\mathrm{Fr}^{-1}(F), [W]=\mathrm{Fr}^{-1}(G)$. The Kronecker product of $F$ and $G$ is
\begin{equation*}
F\otimes G:=\mathrm{Fr}([V]\otimes [W])
\end{equation*}
\end{deff}

\noindent If $\lambda$ is a partition of $n$, the fiber $\mathscr{P}^n_{\lambda}$ is a bigraded $\mathfrak{S}_n$-module. Haiman introduced the transformed Macdonald symmetric functions $\tilde{H}_{\lambda}(z;q,t)$ \cite[Definition $3.5.2$]{H03}. The $n!$ theorem (\cite[Theorem $4.1.5$]{H03}) can be reformulated the following way.
\begin{prop}
\label{lien_proc_mcdo}
For each partition $\lambda$ of $n$, one has $\mathrm{Fr}([\mathscr{P}^n_{\lambda}])=\tilde{H}_{\lambda}(z;q,t)$.
\end{prop}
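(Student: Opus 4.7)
The plan is to derive Proposition~\ref{lien_proc_mcdo} as a direct reformulation of Haiman's \cite[Theorem~$4.1.5$]{H03}, since essentially no new content beyond unpacking conventions is required. First I would make explicit the bigraded $\mathfrak{S}_n$-module structure on $\mathscr{P}^n_{\lambda}$: the maximal diagonal torus $T=(\mathbb{C}^{\times})^2 \subset \mathrm{GL}_2(\mathbb{C})$ acts on $\mathcal{H}_n$ with $I_{\lambda}$ as a fixed point, and because $\mathscr{P}^n$ is $\mathrm{GL}_2(\mathbb{C})$-equivariant, the fiber $\mathscr{P}^n_{\lambda}$ inherits a $T$-action that commutes with the $\mathfrak{S}_n$-action. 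Decomposing into $T$-weight spaces yields the bigrading.

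Next I would invoke the strong form of the $n!$ theorem. Haiman constructs a bigraded $\mathfrak{S}_n$-module $D_{\lambda}$ as a quotient of $\mathbb{C}[x_1,\ldots,x_n,y_1,\ldots,y_n]$, and \cite[Theorem~$4.1.5$]{H03} establishes an isomorphism of bigraded $\mathfrak{S}_n$-modules
\[
\mathscr{P}^n_{\lambda} \simeq D_{\lambda}.
\]
On the other hand, Haiman's definition \cite[Definition~$3.5.2$]{H03} characterizes $\tilde{H}_{\lambda}(z;q,t)$ precisely as the bigraded Frobenius characteristic of $D_{\lambda}$, i.e.\
\[
\tilde{H}_{\lambda}(z;q,t) = \sum_{(r,s) \in \mathbb{Z}^2} \mathrm{Fr}\left([(D_{\lambda})_{r,s}]\right) q^r t^s.
\]
Combining the two identifications gives $\mathrm{Fr}([\mathscr{P}^n_{\lambda}]) = \tilde{H}_{\lambda}(z;q,t)$, which is the claim.

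The only genuine obstacle is a bookkeeping one: one must check that the $T$-weight grading on $\mathscr{P}^n_{\lambda}$ coming from $\mathrm{GL}_2(\mathbb{C})$-equivariance of the Procesi bundle agrees, variable by variable, with the $(q,t)$-bigrading that Haiman uses on $D_{\lambda}$ (including the standard convention that $q$ and $t$ correspond to the $x$- and $y$-degrees respectively). This is routine but worth stating explicitly so that the reformulation is unambiguous; once the conventions are aligned the equality is tautological.
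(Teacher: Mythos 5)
Your proposal is correct and matches the paper's own (one-line) justification: the paper simply cites Haiman's Definition~3.5.2 and Theorem~4.1.5 and states that the proposition is a reformulation of the $n!$ theorem, which is precisely your argument spelled out. The one small imprecision is that $\tilde{H}_\lambda$ is \emph{defined} in Haiman by triangularity and normalization axioms rather than directly as $\mathrm{Fr}(D_\lambda)$; that these coincide is part of the content of the $n!$/Macdonald positivity theorem, but this does not change the substance of the argument.
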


\begin{deff}
Let $V$ be a finite-dimensional complex vector space and $G$ be a finite subgroup of $\mathrm{GL}(V)$ generated by (pseudo-)reflections in $V$. The group $G$ then acts on the symmetric algebra $S(V)$ of $V$, which is naturally graded $S(V)=\bigoplus_{i\geq 0}{S^i(V)}$. Let $\mathfrak{M}$ be the graded maximal ideal of $S(V)^G$. Define $S(V)^{\mathrm{co}(G)}:=S(V)/\mathfrak{M}S(V)$ the coinvariant algebra of $G$, which is then also graded. Note that as a $G$-module it is isomorphic to the regular representation of $G$ by the Chevalley-Shephard-Todd Theorem.
\end{deff}

\noindent If $V=\bigoplus_{i \in \mathbb{Z}}{V_i}$ is a graded vector space, then let $\mathrm{dim}^{\mathrm{gr}}(V):=\sum_{i \in \mathbb{Z}}{\mathrm{dim}(V_i)q^i} \in \mathbb{Z}[q^{\pm 1}]$ be the graded dimension of $V$. In this section, let us denote by $V^n=\mathbb{C}^n$ the permutation representation of $\mathfrak{S}_n$.

\begin{deff}
For  $\lambda \vdash n$, define
\begin{equation*}
{F_{\lambda}(q):=\mathrm{dim}^{\mathrm{gr}}\big((S(V^n)^{\mathrm{co}(\mathfrak{S}_n)}\otimes V_{\lambda}^*)^{\mathfrak{S}_n}\big)}
\end{equation*}
the fake degree associated with the irreducible representation $V_{\lambda}$ of $\mathfrak{S}_n$.
\end{deff}

\begin{lemme}
\label{fk_deg}
If $\lambda \vdash n$, then the fake degree $F_{\lambda}(q)$ is equal to $q^{n(\lambda)}\frac{\prod_{i=1}^n{\left(1-q^i\right)}}{\prod_{c \in \mathcal{Y}(\lambda)}{\left(1-q^{h_c(\lambda)}\right)}}$.
\end{lemme}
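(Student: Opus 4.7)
My plan is to compute the graded Frobenius characteristic of the coinvariant algebra $S(V^n)^{\mathrm{co}(\mathfrak{S}_n)}$ in two different ways and match coefficients in the Schur basis. Since $\mathrm{Fr}(V_\lambda) = s_\lambda$, the definition of $F_\lambda(q)$ gives directly
\begin{equation*}
\mathrm{Fr}\bigl(S(V^n)^{\mathrm{co}(\mathfrak{S}_n)}\bigr) = \sum_{\lambda \vdash n} F_\lambda(q)\, s_\lambda,
\end{equation*}
so the task reduces to producing another expansion of the left hand side and reading off the coefficients.

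First I would apply the Chevalley-Shephard-Todd theorem to the action of $\mathfrak{S}_n$ on $V^n$ by permutations, which yields the isomorphism of graded $\mathfrak{S}_n$-modules $S(V^n) \simeq S(V^n)^{\mathfrak{S}_n} \otimes S(V^n)^{\mathrm{co}(\mathfrak{S}_n)}$. Since $S(V^n)^{\mathfrak{S}_n} = \mathbb{C}[e_1,\dots,e_n]$ is freely generated by the elementary symmetric polynomials of degrees $1,\dots,n$, its Hilbert series is $\prod_{i=1}^n (1-q^i)^{-1}$, and therefore
\begin{equation*}
\mathrm{Fr}\bigl(S(V^n)^{\mathrm{co}(\mathfrak{S}_n)}\bigr) = \prod_{i=1}^n (1-q^i) \cdot \mathrm{Fr}\bigl(S(V^n)\bigr).
\end{equation*}

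Next I would compute $\mathrm{Fr}(S(V^n))$ by a direct character calculation: the trace of an element of cycle type $\mu$ on $\bigoplus_{d \geq 0} q^d\, S^d(V^n)$ equals $\prod_i (1-q^{\mu_i})^{-1}$. Expanding in the power-sum basis and using the plethysm $p_k[1/(1-q)] = 1/(1-q^k)$ identifies $\mathrm{Fr}(S(V^n))$ with the symmetric function $h_n[Z/(1-q)]$, and the Cauchy identity then expands
\begin{equation*}
h_n[Z/(1-q)] = \sum_{\lambda \vdash n} s_\lambda(1, q, q^2, \dots)\, s_\lambda.
\end{equation*}

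The final step is to invoke the classical principal specialization of Schur functions,
\begin{equation*}
s_\lambda(1, q, q^2, \dots) = \frac{q^{n(\lambda)}}{\prod_{c \in \mathcal{Y}(\lambda)}(1 - q^{h_c(\lambda)})},
\end{equation*}
which follows from the Jacobi-Trudi determinantal formula combined with standard $q$-binomial manipulations. Substituting this into the previous identities and comparing coefficients of $s_\lambda$ produces the claimed expression for $F_\lambda(q)$. The main obstacle will be the middle step: the character computation itself is routine, but the clean passage to the plethystic identity $\mathrm{Fr}(S(V^n)) = h_n[Z/(1-q)]$ requires some care with plethystic conventions and the translation between power-sum and complete-homogeneous bases. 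The remaining ingredients are entirely classical.
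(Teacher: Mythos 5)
Your argument is correct and amounts to an unpacking of the two citations the paper uses in place of a proof: your CST/Molien/Cauchy chain deriving $\mathrm{Fr}(S(V^n)^{\mathrm{co}(\mathfrak{S}_n)}) = \prod_{i=1}^n(1-q^i)\sum_{\lambda\vdash n}s_\lambda(1,q,q^2,\dots)\,s_\lambda$ is precisely the content of the reference to Stanley (Prop.\ 4.11), and the principal specialization $s_\lambda(1,q,q^2,\dots)=q^{n(\lambda)}/\prod_{c}(1-q^{h_c(\lambda)})$ is exactly the other cited result (Cor.\ 7.21.5). So this is the same route, written out in full rather than by citation; all the steps check out, including the implicit use of $V_\lambda^*\cong V_\lambda$ for $\mathfrak{S}_n$ in the first display.
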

\begin{proof}
To prove this equality one can use \cite[Proposition $4.11$]{Stan79} and \cite[Corollary $7.21.5$]{Stan99}.
\end{proof}

\noindent Let us first study $[\mathscr{P}^n_{\lambda}]$ as a $(\mathfrak{S}_n \times \mathbb{T}_1)$-module, where $\mathbb{T}_1$ denotes the maximal diagonal torus of $\mathrm{SL}_2(\mathbb{C})$. Using \cite[Proposition $3.5.10$]{H03}, one has
\begin{equation}
\tilde{H}_{\lambda}(z;q,q^{-1})= \frac{\prod_{c \in \mathcal{Y}(\lambda)}{\left(1-q^{h_c(\lambda)}\right)}}{q^{n(\lambda)}}s_{\lambda}\left[\frac{Z}{1-q}\right] \tag{$\star$}.
\end{equation}

\begin{lemme}
\label{plethysm_s}
The following equality holds in $\Lambda_{\mathbb{Q}(q)}$:
\begin{equation*}
s_{\lambda}\left[\frac{Z}{1-q}\right]=\frac{\mathrm{Fr}\left(S(V^n)^{\mathrm{co}(\mathfrak{S}_n)}\otimes V_{\lambda}\right)}{\prod_{i=1}^n{\left(1-q^i\right)}}.
\end{equation*}
\end{lemme}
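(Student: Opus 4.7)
The plan is to expand both sides of the identity in the power-sum basis of $\Lambda_{\mathbb{Q}(q)}$ and verify they agree coefficient by coefficient, and then use the Chevalley--Shephard--Todd theorem to pass from $S(V^n)$ to its coinvariant quotient. The key symmetric-function identity is the plethystic rule
\begin{equation*}
p_k[Z/(1-q)] = \frac{p_k[Z]}{1-q^k},
\end{equation*}
which follows because plethysm is additive and sends $X \mapsto qX$ to $q^k p_k[X]$, so the geometric series $Z/(1-q) = Z + qZ + q^2Z + \cdots$ yields $p_k[Z](1+q^k+q^{2k}+\cdots)$. Extending multiplicatively gives $p_\mu[Z/(1-q)] = p_\mu[Z]/\prod_j(1-q^{\mu_j})$, and combining with $s_\lambda = \sum_{\mu \vdash n} \chi_\lambda(\mu) p_\mu/z_\mu$ produces
\begin{equation*}
s_\lambda[Z/(1-q)] = \sum_{\mu \vdash n} \frac{\chi_\lambda(\mu)}{z_\mu \prod_j (1-q^{\mu_j})}\, p_\mu[Z].
\end{equation*}

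Next, I would compute the graded Frobenius characteristic of the internal tensor product $S(V^n) \otimes V_\lambda$ (with $\mathfrak{S}_n$ acting diagonally). By Molien's formula, for $\sigma \in \mathfrak{S}_n$ of cycle type $\mu$ one has $\mathrm{tr}(\sigma \mid S(V^n); q) = 1/\det(1 - q\sigma\mid V^n) = 1/\prod_j(1-q^{\mu_j})$, and tensoring with $V_\lambda$ multiplies the character by $\chi_\lambda(\sigma)$. Applying the Frobenius map $M \mapsto \sum_\mu \chi_M(\mu) p_\mu/z_\mu$ then yields
\begin{equation*}
\mathrm{Fr}(S(V^n) \otimes V_\lambda) = \sum_{\mu \vdash n} \frac{\chi_\lambda(\mu)}{z_\mu \prod_j (1-q^{\mu_j})}\, p_\mu[Z] \;=\; s_\lambda[Z/(1-q)].
\end{equation*}

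Finally, the Chevalley--Shephard--Todd theorem gives an isomorphism of graded $\mathfrak{S}_n$-modules $S(V^n) \cong S(V^n)^{\mathfrak{S}_n} \otimes_{\mathbb{C}} S(V^n)^{\mathrm{co}(\mathfrak{S}_n)}$, where $\mathfrak{S}_n$ acts trivially on the factor $S(V^n)^{\mathfrak{S}_n} = \mathbb{C}[e_1,\dots,e_n]$ of graded dimension $\prod_{i=1}^n 1/(1-q^i)$. Tensoring with $V_\lambda$ and taking graded Frobenius characteristics produces
\begin{equation*}
\mathrm{Fr}(S(V^n) \otimes V_\lambda) \;=\; \frac{\mathrm{Fr}(S(V^n)^{\mathrm{co}(\mathfrak{S}_n)} \otimes V_\lambda)}{\prod_{i=1}^n (1-q^i)},
\end{equation*}
which combined with the previous step is precisely the lemma. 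No step is genuinely hard here; the mildest subtlety is ensuring that the reflection-group decomposition is compatible with the grading, which is a standard consequence of the Chevalley--Shephard--Todd theorem.
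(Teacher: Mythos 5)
Your proof is correct, and it takes a genuinely different route from the paper. The paper derives the identity by first invoking Haiman's result (\cite[Proposition $3.3.1$]{H03}) to write $s_{\lambda}[Z/(1-q)]$ as the graded Kronecker product $s_n[Z/(1-q)] \otimes \mathrm{Fr}([V_{\lambda}])$, and then applies the specialisation $(\star)$ of the transformed Macdonald polynomial $\tilde{H}_n(z;q,q^{-1})$ together with the reformulated $n!$ theorem (Proposition~\ref{lien_proc_mcdo}) to identify $s_n[Z/(1-q)]$ with $\mathrm{Fr}([S(V^n)^{\mathrm{co}(\mathfrak{S}_n)}])/\prod_{i=1}^n(1-q^i)$. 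You instead compute both sides directly in the power-sum basis: the plethystic rule $p_k[Z/(1-q)] = p_k/(1-q^k)$ combined with Molien's formula shows $s_{\lambda}[Z/(1-q)] = \mathrm{Fr}(S(V^n)\otimes V_{\lambda})$, and the Chevalley--Shephard--Todd factorisation $S(V^n) \cong S(V^n)^{\mathfrak{S}_n} \otimes S(V^n)^{\mathrm{co}(\mathfrak{S}_n)}$ of graded $\mathfrak{S}_n$-modules peels off the factor $\prod_{i=1}^n(1-q^i)^{-1}$. Your approach is more elementary and self-contained: it avoids Macdonald polynomials and the $n!$ theorem entirely, using only Molien, the power-sum expansion, and CST. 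What the paper's route buys is that the Macdonald-specialisation identity $(\star)$ is already stated and needed elsewhere in the argument, so reusing it keeps the overall exposition tight; your route would be preferable if one wanted this lemma to stand independently of Haiman's machinery.

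One small point worth making explicit in your write-up: the tensor product appearing in $\mathrm{Fr}(S(V^n)\otimes V_{\lambda})$ is the internal (Kronecker) tensor with diagonal $\mathfrak{S}_n$-action, and $\mathrm{Fr}$ of an internal tensor product is the Kronecker product $\otimes$ on $\Lambda$, not the usual product. You use this correctly in the Molien computation (multiplying the character value by $\chi_{\lambda}(\sigma)$), but it deserves a word since the notation $\otimes$ is overloaded.

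Also verify, as you note, that $\mathfrak{S}_n$ acting on the permutation representation $V^n = \mathbb{C}^n$ is indeed generated by pseudo-reflections: transpositions fix the hyperplane $\{x_i = x_j\}$, so CST applies, $S(V^n)^{\mathfrak{S}_n} = \mathbb{C}[e_1,\dots,e_n]$, and the graded dimension of the invariant ring is $\prod_{i=1}^n (1-q^i)^{-1}$. This is all standard, and your proof is sound.
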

\begin{proof}
Let us start rewriting the plethysm
\begin{align*}
s_{\lambda}\left[\frac{Z}{1-q}\right] &= \sum_{i=0}^{\infty}{\mathrm{Fr}\left([S^i(V^n)\otimes V_{\lambda}]\right)q^i}\\
                       &= \sum_{i=0}^{\infty}{\mathrm{Fr}\left([S^i(V^n)]\right)\otimes \mathrm{Fr}\left([V_{\lambda}]\right)q^i}\\
                       &= \sum_{i=0}^{\infty}{\mathrm{Fr}\left([S^i(V^n)]\right)q^i} \otimes \mathrm{Fr}([V_{\lambda}])\\
                       &= s_n\left[\frac{Z}{1-q}\right]\otimes \mathrm{Fr}\left([V_{\lambda}]\right)
                     \end{align*}
where the first and the last equalities come from \cite[Proposition $3.3.1$]{H03}. Proposition \ref{lien_proc_mcdo} for $\lambda=(n)$ gives
\begin{equation*}
\tilde{H}_{n}(z;q,t)=\tilde{H}_{n}(z;q,q^{-1})=\mathrm{Fr}\left(\left[S(V^n)^{\mathrm{co}(\mathfrak{S}_n)}\right]\right) .
\end{equation*}
Moreover using the equation $(\star)$, one has
\begin{equation*}
\tilde{H}_{n}(z;q,q^{-1})=\prod_{i=1}^n{\left(1-q^i\right)}s_n\left[\frac{Z}{1-q}\right] .
\end{equation*}
Summing it up, one gets
\begin{align*}
s_{\lambda}\left[\frac{Z}{1-q}\right] &= \frac{\mathrm{Fr}\left(\left[S(V^n)^{\mathrm{co}(\mathfrak{S}_n)}\right]\right)}{\prod_{i=1}^n{\left(1-q^i\right)}}\otimes \mathrm{Fr}\left([V_{\lambda}]\right)\\
                               &= \frac{\mathrm{Fr}\left(\left[S(V^n)^{\mathrm{co}(\mathfrak{S}_n)} \otimes V_{\lambda}\right]\right)}{\prod_{i=1}^n{(1-q^i)}}.
\end{align*}
\end{proof}

\begin{prop}
\label{link_fake_proc}
Take $\lambda \in \mathcal{P}_n$. The following equality holds in $\mathcal{R}(\mathfrak{S}_n)^{\mathrm{gr}}$:
\begin{equation*}
F_{\lambda}(q)\left[\mathscr{P}^n_{\lambda}\right]_{\mathfrak{S}_n}^{\mathrm{gr}} = \left[S(V^n)^{\mathrm{co}(\mathfrak{S}_n)}\otimes V_{\lambda}\right]_{\mathfrak{S}_n}^{\mathrm{gr}}.
\end{equation*}
If, by abuse of notation, one denotes by $\tau_{\ell}$ the irreducible character $\chi_{(n)} \boxtimes \tau_{\ell}$ (where $\chi_{(n)}$ is the trivial character of $\mathfrak{S}_n$), then
\begin{equation*}
F_{\lambda}(\tau_{\ell}) \left[\mathscr{P}^n_{\lambda}\right]_{\mathfrak{S}_n \times \mu_{\ell}} = \left[S(V^n)^{\mathrm{co}(\mathfrak{S}_n)}\otimes V_{\lambda}\right]_{\mathfrak{S}_n \times \mu_{\ell}}
\end{equation*}
in the $\mathbb{Z}$-algebra $\mathcal{R}(\mathfrak{S}_n)\boxtimes \mathcal{R}(\mu_{\ell})$.
\end{prop}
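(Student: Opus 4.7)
The plan is to derive both equalities essentially as a bookkeeping exercise combining the four previously established facts: the specialization $(\star)$, Lemma \ref{plethysm_s}, the hook-length formula for $F_{\lambda}(q)$ given in Lemma \ref{fk_deg}, and Proposition \ref{lien_proc_mcdo}. Throughout, the key observation is that the $(\mathfrak{S}_n\times \mathbb{T}_1)$-module structure on $\mathscr{P}^n_{\lambda}$ is obtained from the bigraded $\mathfrak{S}_n$-module structure by specializing $t=q^{-1}$; equivalently, under $\mathrm{Fr}$ the passage from $\mathcal{R}^{\mathrm{gr}}(\mathfrak{S}_n\times \mathbb{T}_1)$ to $\Lambda[q^{\pm 1}]$ sends $[\mathscr{P}^n_{\lambda}]^{\mathrm{gr}}_{\mathfrak{S}_n\times \mathbb{T}_1}$ to $\tilde{H}_{\lambda}(z;q,q^{-1})$.

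First, I would establish the $(\mathfrak{S}_n \times \mathbb{T}_1)$-equivariant version of the first equality. Starting from Proposition \ref{lien_proc_mcdo} and specializing $t=q^{-1}$, the Frobenius characteristic of $[\mathscr{P}^n_{\lambda}]^{\mathrm{gr}}_{\mathfrak{S}_n\times \mathbb{T}_1}$ equals $\tilde{H}_{\lambda}(z;q,q^{-1})$. Substituting equation $(\star)$ and then Lemma \ref{plethysm_s}, this rewrites as
\begin{equation*}
\tilde{H}_{\lambda}(z;q,q^{-1}) = \frac{\prod_{c \in \mathcal{Y}(\lambda)}(1-q^{h_c(\lambda)})}{q^{n(\lambda)}\prod_{i=1}^n(1-q^i)}\cdot\mathrm{Fr}\bigl(\bigl[S(V^n)^{\mathrm{co}(\mathfrak{S}_n)}\otimes V_{\lambda}\bigr]^{\mathrm{gr}}_{\mathfrak{S}_n}\bigr).
\end{equation*}
By Lemma \ref{fk_deg}, the rational fraction in front is precisely $F_{\lambda}(q)^{-1}$. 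Multiplying through by $F_{\lambda}(q)$ and applying $\mathrm{Fr}^{-1}$ (which is a ring isomorphism and commutes with multiplication by the scalar $F_{\lambda}(q)\in \mathbb{Z}[q]$) yields the first equality of the proposition.

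Second, I would upgrade this identity from $\mathcal{R}^{\mathrm{gr}}(\mathfrak{S}_n)$ to $\mathcal{R}(\mathfrak{S}_n)\boxtimes \mathcal{R}(\mu_{\ell})$. The point is that $\mu_{\ell}$ acts on $\mathscr{P}^n_{\lambda}$ and on $S(V^n)^{\mathrm{co}(\mathfrak{S}_n)}$ as the restriction of the $\mathbb{T}_1$-action, so the $\mu_{\ell}$-equivariant class of either side is obtained from its $\mathbb{T}_1$-equivariant class by the ring morphism $\mathbb{Z}[q^{\pm 1}]\to \mathcal{R}(\mu_{\ell})$, $q\mapsto \tau_{\ell}$. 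Applying this morphism coefficient-wise to the identity already proved yields
\begin{equation*}
F_{\lambda}(\tau_{\ell})\,[\mathscr{P}^n_{\lambda}]_{\mathfrak{S}_n\times \mu_{\ell}} = [S(V^n)^{\mathrm{co}(\mathfrak{S}_n)}\otimes V_{\lambda}]_{\mathfrak{S}_n\times \mu_{\ell}}
\end{equation*}
in $\mathcal{R}(\mathfrak{S}_n)\boxtimes\mathcal{R}(\mu_{\ell})$, once one identifies the scalar $F_{\lambda}(q)$ evaluated at $q=\tau_{\ell}$ with the class $\chi_{(n)}\boxtimes F_{\lambda}(\tau_{\ell})$ as agreed in the statement.

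The only non-cosmetic obstacle is justifying that the specialization $t=q^{-1}$ at the level of bigraded characters really does compute the class of $\mathscr{P}^n_{\lambda}$ as a graded $\mathfrak{S}_n$-module under the $\mathbb{T}_1$-action embedded diagonally as $\mathrm{diag}(t,t^{-1})\in \mathrm{SL}_2(\mathbb{C})$; this follows directly from the way $\mathrm{Fr}$ has been extended to bigraded modules in the preceding subsection (setting $t=q^{-1}$ converts the bivariate Poincaré polynomial to the univariate graded character for the $\mathbb{T}_1\subset \mathrm{SL}_2(\mathbb{C})$-action). Once this identification is in place, everything else is a straightforward manipulation of rational functions in $q$.
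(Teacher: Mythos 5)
Your proposal is correct and follows essentially the same route as the paper: combine Proposition \ref{lien_proc_mcdo} specialized at $t=q^{-1}$ with $(\star)$, Lemma \ref{plethysm_s}, and the hook-length formula from Lemma \ref{fk_deg} to obtain the first equality after applying $\mathrm{Fr}^{-1}$, then pull back along $\tau_{\ell}\colon\mu_{\ell}\to\mathbb{T}_1$ for the second. The justification you flag for the $t=q^{-1}$ specialization is exactly the point the paper addresses with its remark that graded $\mathfrak{S}_n$-modules are the same as $(\mathfrak{S}_n\times\mathbb{T}_1)$-modules, so no gap remains.
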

\begin{proof}
Combining Lemma \ref{plethysm_s} with $(\star)$ gives
\begin{align*}
\frac{q^{n(\lambda)}}{\prod_{c \in \mathcal{Y}(\lambda)}{\left(1-q^{h_c(\lambda)}\right)}}\tilde{H}_{\lambda}(z;q,q^{-1}) &=s_{\lambda}\left[\frac{Z}{1-q}\right]\\
                                                                 &= \frac{\mathrm{Fr}\left(\left[S(V^n)^{\mathrm{co}(\mathfrak{S}_n)}\otimes V_{\lambda}\right]\right)}{\prod_{i=1}^n{(1-q^i)}}.
\end{align*}
Combining Lemma \ref{fk_deg} and Proposition \ref{lien_proc_mcdo}, gives
\begin{equation*}
F_{\lambda}(q)\text{ } \mathrm{Fr}\left([\mathscr{P}^n_{\lambda}]^{\mathrm{gr}}\right)=\mathrm{Fr}\left(\left[S(V^n)^{\mathrm{co}(\mathfrak{S}_n)}\otimes V_{\lambda}\right]^{\mathrm{gr}}\right).
\end{equation*}
Taking the inverse Frobenius characteristic map gives the first equality.\\
Since graded modules are the same as $\mathbb{T}_1$-modules, one can take the pullback by ${\tau_{\ell}\colon \mu_{\ell} \to \mathbb{T}_1}$ of the first equality to get the second equality.
\end{proof}

\noindent In the next two subsections, we apply Proposition~\ref{link_fake_proc} to understand the structure of $\mathscr{P}^n_{\lambda}$ as a $(\mathfrak{S}_n\times \mu_{\ell})$-module, and prove directly Corollary~\ref{cor1_a} in two particular cases.

\subsubsection{When $\gamma_{\ell}$ is very small}

Denote by $\mathcal{P}_{n,\ell}^{o}$ the set of all partitions of $n$ with $\ell$-core either empty or equal to $(1)\vdash 1$. We show that Corollary \ref{cor1_a} holds for all $\lambda \in \mathcal{P}_{n,\ell}^o$.

\begin{lemme}
\label{j,l_core}
For each divisor $j$ of $\ell$, the $j$-core of $\lambda$ is equal to the $j$-core of the $\ell$-core of $\lambda$.
\end{lemme}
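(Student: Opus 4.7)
The plan is to use the abacus model for partitions. Fix an integer $N$ larger than the length of $\lambda$. To any partition $\mu$ with at most $N$ parts, associate the beta numbers $\beta_i(\mu) := \mu_i + N - i$ for $i \in \llbracket 1, N \rrbracket$; equivalently, place $N$ beads at the positions $\{\beta_1(\mu), \ldots, \beta_N(\mu)\} \subset \mathbb{Z}_{\geq 0}$. The $k$-abacus displays these beads on $k$ runners, where runner $r$ (for $0 \leq r < k$) consists of the positions congruent to $r$ modulo $k$. A well-known classical fact (see e.g. James--Kerber) is that removing a rim hook of length $k$ from $\mu$ corresponds exactly to moving one bead on the $k$-abacus from some position $p$ to the empty position $p-k$ on the same runner. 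Consequently, the $k$-core $\gamma_k(\mu)$ corresponds to the abacus configuration in which all beads have been pushed up as far as possible on each of the $k$ runners, and it is therefore determined entirely by the multiset of bead counts on the $k$ runners.

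Next, I would observe the following compatibility between the $\ell$- and $j$-abacus displays of the \emph{same} set of beads $\{\beta_1(\mu), \ldots, \beta_N(\mu)\}$, using $j \mid \ell$. Since the congruence class of a position mod $j$ is determined by its class mod $\ell$, each runner of the $j$-abacus decomposes, as a set of positions, into exactly $\ell/j$ runners of the $\ell$-abacus: namely, $j$-runner $k'$ is the union of the $\ell$-runners $k' + mj$ for $0 \leq m < \ell/j$. In particular, the number of beads on $j$-runner $k'$ equals the sum of the numbers of beads on these $\ell/j$ distinct $\ell$-runners.

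The key step is now immediate. By the abacus description, the partition $\gamma_{\ell}(\lambda)$ is obtained from $\lambda$ by a finite sequence of moves of the form $p \mapsto p - \ell$ on individual runners of the $\ell$-abacus. Each such move sends a bead at position $p$ to position $p - \ell$, which lies on the same $j$-runner since $j \mid \ell$. Thus the number of beads on each $j$-runner is preserved by these moves, and hence $\lambda$ and $\gamma_{\ell}(\lambda)$ have the same multiset of bead counts on the $j$-abacus. Since the $j$-core is determined by these counts, $\gamma_j(\lambda) = \gamma_j(\gamma_{\ell}(\lambda))$.

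No serious obstacle is anticipated: the proof consists almost entirely of invoking the standard abacus dictionary. The only delicate point is being clean about the correspondence between rim $\ell$-hook removals and single-bead upward moves on the $\ell$-abacus; once this is stated, the rest is a one-line verification using $j \mid \ell$.
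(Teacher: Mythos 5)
Your proof is correct and follows essentially the same route as the paper: both arguments use the abacus model and the key observation that removing an $\ell$-hook moves a bead by $\ell$ positions and therefore (since $j\mid\ell$) does not change which $j$-runner the bead lies on, so the $j$-runner bead counts of $\lambda$ and $\gamma_\ell(\lambda)$ coincide. One small imprecision: the $j$-core is determined by the \emph{tuple} of bead counts on the $j$-runners, not merely the multiset, but this does not affect your argument since you in fact establish that the count on \emph{each individual} $j$-runner is preserved.
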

\begin{proof}
One can use the link between partitions and abacuses \cite[Proposition $3.2$]{Ol93}. Consider the $j$-abacus of $\lambda$. Thanks to \cite[Proposition $1.8$]{Ol93}, one knows that to obtain the $j$-core of $\lambda$, one needs to move, in each runner, all the beads as high as possible. Notice now that with the $j$-abacus one can also obtain the $\ell$-core. Let $\ell=kj$. Again using the result of \cite[Proposition $1.8$]{Ol93}, let us describe a procedure to obtain the $\ell$-core out of the $j$-abacus of $\lambda$. If $i \in \llbracket 0, j\smin 1\rrbracket$, then the level of a position in the $j$-abacus $aj+ i$ is defined to be the integer $a$ and the length of a movement of a bead from a position $a_1j+i$ to a position $a_2j+i$ is defined to be $a_1-a_2$. Now the $\ell$-core of $\lambda$ is obtained by moving all beads, in each runner, as high as possible only with movements of length $k$. One then has that the $j$-core of $\lambda$ is equal to the $j$-core of the $\ell$-core of $\lambda$.
\end{proof}

\begin{lemme}
\label{neq0_0}
For each $\lambda \in \mathcal{P}_{n,\ell}^{o}$, and each $ k \in \llbracket 0, \ell \smin 1 \rrbracket, F_{\lambda}(\zeta_{\ell}^k)\neq 0$.
\end{lemme}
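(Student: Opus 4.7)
The plan is to compute, for each $k$, the multiplicity of the cyclotomic polynomial $\Phi_d$ in the polynomial $F_\lambda(q)$, where $d$ is the multiplicative order of $\zeta_\ell^k$, and to show that this multiplicity is zero.

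First, I would set $d := \ell/\gcd(\ell,k)$, so that $\zeta_\ell^k$ is a primitive $d$-th root of unity; in particular $d$ divides $\ell$. Since $\zeta_\ell^k$ is a root only of $\Phi_d$ among the cyclotomic polynomials, $F_\lambda(\zeta_\ell^k) \neq 0$ is equivalent to $\Phi_d \nmid F_\lambda$. Using the factorization $1-q^m = \prod_{d'\mid m}\Phi_{d'}(q)$ together with Lemma \ref{fk_deg}, one obtains that the $\Phi_d$-adic valuation of $F_\lambda(q)$ equals
\[
v_{\Phi_d}(F_\lambda) = \lfloor n/d \rfloor - N_d(\lambda),
\]
where $N_d(\lambda) := \#\{c \in \mathcal{Y}(\lambda) \mid d \,\text{divides}\, h_c(\lambda)\}$; the factor $q^{n(\lambda)}$ does not contribute to this valuation.

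The next step would be to appeal to the classical fact (a standard consequence of the $d$-abacus model already used in the proof of Lemma \ref{j,l_core}) that the multiset $\{h_c(\lambda)/d \mid c \in \mathcal{Y}(\lambda),\; d \mid h_c(\lambda)\}$ coincides with the multiset of all hook lengths appearing in the $d$-quotient of $\lambda$. Since the total number of cells in the $d$-quotient is $r_d(\lambda) = (n - g_d(\lambda))/d$, this yields $N_d(\lambda) = r_d(\lambda)$, and therefore
\[
v_{\Phi_d}(F_\lambda) = \lfloor n/d\rfloor - r_d(\lambda) = \lfloor g_d(\lambda)/d \rfloor.
\]

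Finally, I would invoke Lemma \ref{j,l_core}: since $d \mid \ell$, the $d$-core of $\lambda$ coincides with the $d$-core of $\gamma_\ell(\lambda)$, which by hypothesis is either $\emptyset$ or $(1)$. When $d \geq 2$, both of these partitions are already $d$-cores (the only possible hook length is $1<d$), hence $g_d(\lambda) \leq 1 < d$ and $\lfloor g_d(\lambda)/d\rfloor = 0$. The case $d = 1$ (corresponding to $k=0$) is handled directly via the hook-length formula, which gives $F_\lambda(1) = n!/\prod_c h_c(\lambda) = \dim V_\lambda > 0$. The only non-bookkeeping ingredient is the identification of hooks divisible by $d$ with the hooks of the $d$-quotient, which I expect to be the main external combinatorial input; once granted, the rest of the argument is a short cyclotomic valuation computation combined with Lemma \ref{j,l_core}.
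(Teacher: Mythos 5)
Your argument is correct and follows essentially the same route as the paper: both compute $v_{\Phi_d}(F_\lambda)$ from Lemma~\ref{fk_deg}, reduce the count of hook lengths divisible by $d$ to the size of the $d$-core via Lemma~\ref{j,l_core}, and conclude that the $\Phi_d$-adic valuation vanishes. The paper is just terser, citing \cite[Proposition~$3.6$]{Ol93} for the hook-count identity $N_d(\lambda)=r_d(\lambda)$ that you derive explicitly from the $d$-quotient picture, and it does not need a separate case $d=1$ since the same valuation computation already gives $v_{\Phi_1}(F_\lambda)=n-n=0$.
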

\begin{proof}
Take $j$ a divisor of $\ell$ and denote by $\Phi_j$ the $j$th cyclotomic polynomial. It is then enough to show that $v_{\Phi_j}(F_{\lambda})=0$ where
$v_{\Phi_j}\colon \mathbb{Q}(q) \to \mathbb{Z}$ is the $\Phi_j$-valuation. From  Lemma \ref{fk_deg}, one has
\begin{equation*}
v_{\Phi_j}(F_{\lambda})= \#\left\{i \in \llbracket 1, n \rrbracket \big| i \equiv 0 \text{ } \mathrm{mod} j\right\} - \#\left\{c \in \mathcal{Y}(\lambda) \big| |h_c(\lambda)| \equiv 0 \text{ } \mathrm{mod} j\right\}
\end{equation*}
Now, \cite[Proposition $3.6$]{Ol93} gives the result for $j=\ell$. If $j$ is a divisor of $\ell$, one can again apply \cite[Proposition $3.6$]{Ol93}, by using  Lemma \ref{j,l_core}, to the $j$-core of $\lambda$ which is just the $j$-core of $\gamma_{\ell}$.
\end{proof}

\noindent Recall that $\theta_{\ell}$ denotes the character of $C_{\ell,n}$ such that $\theta_{\ell}(w_{\ell,n}^{\mathrm{g}_{\ell}})=\zeta_{\ell}$.

\begin{prop}
\label{Spr}
If $\lambda \in \mathcal{P}_{n,\ell}^{o}$, then $\left[\mathrm{Res}^{\mathfrak{S}_n}_{C_{\ell,n}}\left(V_{\lambda}\right)\right]=\left[F_{\lambda}\left(\theta_{\ell}^{-1}\right)\right]$.
\end{prop}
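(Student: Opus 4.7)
The plan is to reduce the stated equality in $\mathcal{R}(C_{\ell,n})$ to a pointwise identity of character values and then invoke Springer's theorem on regular elements. Since $C_{\ell,n}$ is cyclic with generator $w := w_{\ell,n}^{\mathrm{g}_\ell}$, two elements of $\mathcal{R}(C_{\ell,n})$ agree iff they coincide on each $w^k$. Unwinding the right hand side, the value of $F_\lambda(\theta_\ell^{-1})$ at $w^k$ is $F_\lambda(\zeta_\ell^{-k})$, so the proposition reduces to the numerical identity
\begin{equation*}
\chi_\lambda(w^k) = F_\lambda(\zeta_\ell^{-k}), \qquad k \in \llbracket 0, \ell \smin 1\rrbracket.
\end{equation*}

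Springer's theorem on regular elements in complex reflection groups provides precisely such an identity: if $W \subset \mathrm{GL}(V)$ is a finite reflection group and $w \in W$ is $\zeta$-regular (meaning its $\zeta$-eigenspace on $V$ contains a vector lying on no reflection hyperplane), then $\chi_\lambda(w) = F_\lambda(\zeta^{-1})$ for every irreducible $W$-module $V_\lambda$. First I would check that, under the hypothesis $\lambda \in \mathcal{P}_{n,\ell}^o$, i.e.\ $\mathrm{g}_\ell \in \{0,1\}$, the element $w$ is $\zeta_\ell$-regular for the standard action of $\mathfrak{S}_n$ on $V^n = \mathbb{C}^n$ by coordinate permutations. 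Concretely, $w$ is a product of $r_\ell$ disjoint $\ell$-cycles together with $\mathrm{g}_\ell$ fixed letters; its $\zeta_\ell$-eigenspace is $r_\ell$-dimensional, spanned by the Fourier-type eigenvectors $(1, \zeta_\ell^{-1}, \dots, \zeta_\ell^{-(\ell-1)})$ supported on each $\ell$-cycle. A generic linear combination yields pairwise distinct coordinates on the cycled letters; when $\mathrm{g}_\ell = 1$, the lone fixed letter contributes coordinate $0$, still distinct from the nonzero values elsewhere. Hence in both cases all $n$ coordinates are pairwise distinct, so such an eigenvector avoids every reflection hyperplane $\{x_i = x_j\}$ of $\mathfrak{S}_n$, proving $\zeta_\ell$-regularity. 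Since powers of regular elements are regular at the corresponding power of the eigenvalue, each $w^k$ is then $\zeta_\ell^k$-regular, and applying Springer's formula to $w^k$ gives exactly $\chi_\lambda(w^k) = F_\lambda(\zeta_\ell^{-k})$.

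The main obstacle is the verification of regularity, which crucially forces $\mathrm{g}_\ell \leq 1$: as soon as $\mathrm{g}_\ell \geq 2$, two or more fixed letters produce at least two zero coordinates in every $\zeta_\ell$-eigenvector, which then lies on the corresponding reflection hyperplane, so $w$ fails to be regular. This is the same obstruction that appears in Lemma \ref{neq0_0}, which also breaks outside $\mathcal{P}_{n,\ell}^o$. For a proof that stays within symmetric functions and avoids invoking Springer's theorem as a black box, one could instead compute $\chi_\lambda(w^k)$ by the Murnaghan-Nakayama rule applied to the cycle type $(d^{r_\ell\ell/d}, 1^{\mathrm{g}_\ell})$ of $w^k$ (with $d = \ell/\gcd(k,\ell)$) and match the result against $F_\lambda(\zeta_\ell^{-k})$ using the hook-length formulation in Lemma \ref{fk_deg} together with the $\ell$-core/$\ell$-quotient factorization of $\lambda$; this alternative is noticeably more computational and less conceptual.
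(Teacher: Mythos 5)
Your proof is correct and is essentially the same as the paper's: both arguments hinge on producing a $\zeta_\ell$-eigenvector of $w_{\ell,n}^{\mathrm{g}_\ell}$ with pairwise distinct coordinates (hence trivial $\mathfrak{S}_n$-stabilizer) and then invoking Springer's theorem on regular elements, with the hypothesis $\mathrm{g}_\ell\le 1$ used exactly to make such a regular eigenvector exist. The only cosmetic difference is that the paper exhibits an explicit eigenvector $v_{\ell,n}$ whereas you argue via a generic linear combination of block eigenvectors and first restate Springer's theorem at the level of character values at each $w^k$.
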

\begin{proof}
Consider
\begin{equation*}
v_{\ell,n}:=
\begin{cases}
\left(\zeta_{\ell}^{\ell \smin 1},...,\zeta_{\ell},1,2\zeta_{\ell}^{\ell \smin 1},...,2,...,r_{\ell}\zeta_{\ell}^{\ell \smin 1},...,r_{\ell}\right)\in V^n & \text{ if } \gamma_{\ell}=\emptyset\\
\left(0, \zeta_{\ell}^{\ell \smin 1},...,\zeta_{\ell},1,2\zeta_{\ell}^{\ell \smin 1},...,2,...,r_{\ell}\zeta_{\ell}^{\ell \smin 1},...,r_{\ell}\right)\in V^n & \text{ if } \gamma_{\ell}=(1)\\
\end{cases}.
\end{equation*}
\noindent The stabilizer of $v_{\ell,n}$ in $\mathfrak{S}_n$ is the trivial group. Moreover $v_{\ell,n}$ is an eigenvector of $w_{\ell,n}^{\mathrm{g}_{\ell}}$ with eigenvalue $\zeta_{\ell}$. One can now apply \cite[Proposition $4.5$]{Spr74} to obtain the result.
\end{proof}

\noindent We can now prove Corollary \ref{cor1_a} for all $\lambda \in \mathcal{P}_{n,\ell}^{o}$.

\begin{prop}
For each  partition $\lambda \in \mathcal{P}_{n,\ell}^{o}$,
\begin{equation*}
\left[\mathscr{P}^n_{\lambda}\right]_{\mathfrak{S}_n \times \mu_{\ell}}=\sum_{i=0}^{\ell \smin 1}{\left[\mathrm{Ind}_{C_{\ell,n}}^{\mathfrak{S}_n}\left(\theta_{\ell}^{i}\right)\boxtimes \tau_{\ell}^i\right]_{\mathfrak{S}_n\times \mu_{\ell}}}.
\end{equation*}
\end{prop}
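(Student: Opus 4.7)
The plan is to combine Proposition~\ref{link_fake_proc}, Lemma~\ref{neq0_0}, Proposition~\ref{Spr}, and the coinvariant-algebra identity from \cite[Theorem~4.6]{BLM06} into a single computation. The strategy is to clear the invertible factor $F_\lambda(\tau_\ell)$ from both sides of the desired identity.

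First, Proposition~\ref{link_fake_proc} supplies
\[
F_\lambda(\tau_\ell)\,[\mathscr{P}^n_\lambda]_{\mathfrak{S}_n\times\mu_\ell}=[S(V^n)^{\mathrm{co}(\mathfrak{S}_n)}\otimes V_\lambda]_{\mathfrak{S}_n\times\mu_\ell},
\]
and Lemma~\ref{neq0_0} tells us that $F_\lambda(\tau_\ell)$ is a unit in $\mathcal{R}(\mu_\ell)\otimes\mathbb{C}$ because every evaluation $F_\lambda(\zeta_\ell^k)$ is nonzero. Hence the target identity is equivalent to
\[
F_\lambda(\tau_\ell)\sum_{i=0}^{\ell-1}[\mathrm{Ind}_{C_{\ell,n}}^{\mathfrak{S}_n}(\theta_\ell^i)\boxtimes\tau_\ell^i]=[S(V^n)^{\mathrm{co}(\mathfrak{S}_n)}\otimes V_\lambda]_{\mathfrak{S}_n\times\mu_\ell}.
\]

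Next, I would apply \cite[Theorem~4.6]{BLM06} to the reflection group $\mathfrak{S}_n$ and its cyclic subgroup $C_{\ell,n}$ to obtain the coinvariant-algebra decomposition
\[
[S(V^n)^{\mathrm{co}(\mathfrak{S}_n)}]_{\mathfrak{S}_n\times\mu_\ell}=\sum_{j=0}^{\ell-1}[\mathrm{Ind}_{C_{\ell,n}}^{\mathfrak{S}_n}(\theta_\ell^j)]\boxtimes\tau_\ell^j.
\]
Tensoring with $V_\lambda$ (which carries the trivial $\mu_\ell$-action) and applying the projection formula $V_\lambda\otimes\mathrm{Ind}_H^G(\chi)=\mathrm{Ind}_H^G(\mathrm{Res}_H^G(V_\lambda)\otimes\chi)$, the right-hand side rewrites as $\sum_j[\mathrm{Ind}_{C_{\ell,n}}^{\mathfrak{S}_n}(\mathrm{Res}(V_\lambda)\cdot\theta_\ell^j)]\boxtimes\tau_\ell^j$. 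Now substitute $\mathrm{Res}(V_\lambda)=F_\lambda(\theta_\ell^{-1})$ via Proposition~\ref{Spr}; this is where the hypothesis $\lambda\in\mathcal{P}_{n,\ell}^o$ enters. Writing $F_\lambda(q)=\sum_k a_kq^k$, the resulting double sum $\sum_j\sum_k a_k[\mathrm{Ind}(\theta_\ell^{j-k})]\boxtimes\tau_\ell^j$ factors, after setting $i=j-k$, as $F_\lambda(\tau_\ell)\cdot\sum_i[\mathrm{Ind}(\theta_\ell^i)\boxtimes\tau_\ell^i]$, which matches the left-hand side of the reduced equality.

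The main obstacle is the coinvariant-algebra decomposition invoked in the middle step: this is not elementary and forces the citation of \cite[Theorem~4.6]{BLM06}. The surrounding manipulation is a formal reindexing of finite sums, but requires some care with the direction of characters, reflected in the appearance of $\theta_\ell^{-1}$ (rather than $\theta_\ell$) in Proposition~\ref{Spr} and hence in the factorisation $F_\lambda(\theta_\ell^{-1})\cdot\theta_\ell^j=\sum_k a_k\theta_\ell^{j-k}$.
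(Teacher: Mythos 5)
Your argument is correct and essentially identical to the paper's: after multiplying through by $F_\lambda(\tau_\ell)$ via Proposition~\ref{link_fake_proc}, the paper likewise substitutes the coinvariant decomposition of $S(V^n)^{\mathrm{co}(\mathfrak{S}_n)}$ (citing \cite[Theorem~8]{MN05} rather than your specialization of \cite[Theorem~4.6]{BLM06} to $\mathrm{g}_\ell\le 1$), uses Proposition~\ref{Spr} to rewrite $\mathrm{Res}^{\mathfrak{S}_n}_{C_{\ell,n}}(V_\lambda)$, reindexes the double sum, and cancels $F_\lambda(\tau_\ell)$ via Lemma~\ref{neq0_0}. One small imprecision: the hypothesis $\lambda\in\mathcal{P}^o_{n,\ell}$ is what makes the coinvariant decomposition in the form you state and Lemma~\ref{neq0_0} available as well, not only Proposition~\ref{Spr}.
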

\begin{proof}
Let us start with \cite[Theorem $8$]{MN05} which can be reformulated in the following way
\begin{equation*}
\left[S(V^n)^{\mathrm{co}(\mathfrak{S}_n)}\right]_{\mathfrak{S}_n \times \mu_{\ell}}=\left[\mathscr{P}^n_{(n)}\right]_{\mathfrak{S}_n \times \mu_{\ell}}=\sum_{i=0}^{\ell \smin 1}{\left[\mathrm{Ind}_{C_{\ell,n}}^{\mathfrak{S}_n}\left(\theta_{\ell}^{i}\right)\boxtimes \tau_{\ell}^i\right]_{\mathfrak{S}_n\times \mu_{\ell}}}.
\end{equation*}
Using the second equality of Proposition \ref{link_fake_proc} for $\lambda=(n)$ and Proposition~\ref{Spr}, one obtains
\begin{equation*}
F_{\lambda}(\tau_{\ell})\left[\mathscr{P}^n_{\lambda}\right]_{\mathfrak{S}_n\times \mu_{\ell}} = \sum_{i=0}^{\ell \smin 1}{\left[\mathrm{Ind}_{C_{\ell,n}}^{\mathfrak{S}_n}\left(\theta_{\ell}^iF_{\lambda}(\theta_{\ell}^{-1})\right) \boxtimes \tau_{\ell}^i\right]_{\mathfrak{S}_n\times \mu_{\ell}}}.
\end{equation*}
Let us decompose $F_{\lambda}(\theta)=\sum_{j=0}^{\ell \smin 1}{a_j\theta_{\ell}^j}$ with $a_j \in \mathbb{Z}_{\geq 0}$ and rearrange the two sums
\begin{align*}
F_{\lambda}(\tau_{\ell})\left[\mathscr{P}^n_{\lambda}\right]_{\mathfrak{S}_n\times \mu_{\ell}} =& \sum_{i=0}^{\ell \smin 1}{\sum_{j=0}^{\ell \smin 1}{\left[\mathrm{Ind}_{C_{\ell,n}}^{\mathfrak{S}_n}\left(a_j\theta_{\ell}^{i-j}\right) \boxtimes \tau_{\ell}^i\right]_{\mathfrak{S}_n\times \mu_{\ell}}}}\\
                                                                     =& \sum_{i=0}^{\ell \smin 1}{\sum_{j=0}^{\ell \smin 1}{\left[\mathrm{Ind}_{C_{\ell,n}}^{\mathfrak{S}_n}\left(\theta_{\ell}^{i}\right) \boxtimes a_j\tau_{\ell}^{i+j}\right]_{\mathfrak{S}_n\times \mu_{\ell}}}}\\
                                                                     =& F_{\lambda}(\tau_{\ell})\sum_{i=0}^{\ell \smin 1}{\left[\mathrm{Ind}_{C_{\ell,n}}^{\mathfrak{S}_n}\left(\theta_{\ell}^i\right) \boxtimes \tau_{\ell}^i\right]_{\mathfrak{S}_n\times \mu_{\ell}}}.
\end{align*}
The proposition now follows from Lemma \ref{neq0_0}.
\end{proof}

\subsubsection{When $\gamma_{\ell}$ is small and $\ell$ is prime}

Denote by $\mathcal{P}_n^{<\ell}$ the set of all partitions $\mu$ of $n$ such that the size $\mathrm{g}_{\ell}(\mu)$ of the $\ell$-core of $\mu$ is less than $\ell$. Let us show that Corollary \ref{cor1_a} also holds for all partitions of $\mathcal{P}_n^{<\ell}$ where $\ell$ a prime number.

\begin{prop}
\label{BLM}
For each partition $\lambda$ of n, and each integer $\ell\geq 1$, one has the following equality of $(\mathfrak{S}_n\times \mu_{\ell})$-modules
\begin{equation*}
\left[S(V^n)^{\mathrm{co}(\mathfrak{S}_n)}\right] = \sum_{i=0}^{\ell \smin 1}{\sum_{j=0}^{\ell \smin 1}{\left[\mathrm{Ind}_{W_{\ell,n}^{\mathrm{g}_{\ell}}}^{\mathfrak{S}_n}\left(\left(S(V_{\mathrm{g}_{\ell}})^{\mathrm{co}(\mathfrak{S}_{\mathrm{g}_{\ell}})}\right)^{\ell}_j\boxtimes \theta_{\ell}^{i-j}\right)\boxtimes \tau_{\ell}^i\right]}}.
\end{equation*}
\end{prop}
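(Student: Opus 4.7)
The plan is to deduce this proposition directly from \cite[Theorem 4.6]{BLM06}, specialized to the complex reflection group $W=\mathfrak{S}_n$ acting on $V^n$ and to the cyclic subgroup $C_{\ell,n}$ generated by the element $w_{\ell,n}^{\mathrm{g}_\ell}\in \mathfrak{S}_n$ from \eqref{eq:wperm}. This is precisely the reverse of the observation in the remark following Corollary~\ref{cor1_a}: instead of deducing BLM's theorem from that corollary, I invoke BLM to produce the coinvariant identity needed for the independent proof of the corollary in this edge case.

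First, I set up the specialization. The element $w_{\ell,n}^{\mathrm{g}_\ell}$ fixes the first $\mathrm{g}_\ell$ coordinates of $V^n$ pointwise and permutes the remaining $r_\ell\ell$ coordinates as a product of $r_\ell$ disjoint $\ell$-cycles, so the parabolic subgroup of $\mathfrak{S}_n$ fixing its fixed-point subspace is $\mathfrak{S}_{\mathrm{g}_\ell}$, and the subgroup of $\mathfrak{S}_n\times C_{\ell,n}$ that plays the role of BLM's centralizer datum is $W_{\ell,n}^{\mathrm{g}_\ell}=\mathfrak{S}_{\mathrm{g}_\ell}\times C_{\ell,n}$. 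BLM's theorem then yields a decomposition of $[S(V^n)^{\mathrm{co}(\mathfrak{S}_n)}]$ as an $(\mathfrak{S}_n\times \Delta)$-module, with $\Delta\subset C_{\ell,n}\times \mu_\ell$ the diagonal cyclic subgroup introduced in Lemma~\ref{lemme_dcyc2}, of the form
\[
\bigl[S(V^n)^{\mathrm{co}(\mathfrak{S}_n)}\bigr]_{\mathfrak{S}_n\times \Delta}
=\sum_{j=0}^{\ell\smin 1}\Bigl[\mathrm{Ind}_{\mathfrak{S}_{\mathrm{g}_\ell}\times \Delta}^{\mathfrak{S}_n\times \Delta}\bigl(\bigl(S(V^{\mathrm{g}_\ell})^{\mathrm{co}(\mathfrak{S}_{\mathrm{g}_\ell})}\bigr)^{\ell}_{j}\boxtimes \hat{\theta}_\ell^{j}\bigr)\Bigr],
\]
where the subscript $(\cdot)^{\ell}_j$ records the degree-$j$ piece modulo $\ell$ of the graded coinvariant algebra, identified with a $\theta_\ell^j$-isotypic component under the identification $C_{\ell,n}\cong \mu_\ell$ sending $w_{\ell,n}^{\mathrm{g}_\ell}\mapsto \omega_\ell$.

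Second, I pass from the diagonal group $\Delta$ to the full product $C_{\ell,n}\times\mu_\ell$. Applying Lemma~\ref{lemme_dcyc2} to each character $\hat{\theta}_\ell^{j}$ gives
\[
\mathrm{Ind}_{\Delta}^{C_{\ell,n}\times \mu_\ell}\bigl(\hat{\theta}_\ell^{j}\bigr)=\sum_{i=0}^{\ell\smin 1}\mathrm{Ind}_{C_{\ell,n}}^{\mathfrak{S}_{r_\ell\ell}}\bigl(\theta_\ell^{j-i}\bigr)\boxtimes \tau_\ell^{i}.
\]
Induction in stages through $\mathfrak{S}_{\mathrm{g}_\ell}\times\mathfrak{S}_{r_\ell\ell}\times\mu_\ell$, carried out in exactly the same way as in the proof of Corollary~\ref{cor1_a}, then converts the single sum above into the double sum over $(i,j)\in \llbracket 0,\ell\smin 1\rrbracket^2$ appearing in the statement. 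The main obstacle is checking that the BLM framework, which is stated in the language of reflection cosets and $\zeta$-eigenspaces of regular elements, specializes cleanly to the concrete subgroup $W_{\ell,n}^{\mathrm{g}_\ell}$ and that the grading of $S(V^{\mathrm{g}_\ell})^{\mathrm{co}(\mathfrak{S}_{\mathrm{g}_\ell})}$ corresponds exactly to the $\hat{\theta}_\ell^{j}$-twist; once this dictionary is fixed, the remainder is a straightforward combinatorial unfolding.
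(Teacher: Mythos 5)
Your overall strategy — reduce the proposition to an application of \cite[Theorem 4.6]{BLM06} specialized to $W=\mathfrak{S}_n$ and the cyclic subgroup $C_{\ell,n}$ — is exactly the one the paper takes. But you have deferred precisely the step that constitutes the paper's entire proof, namely verifying that BLM's hypotheses are satisfied by exhibiting a regular eigenvector. The paper constructs the vector
\[
v=\bigl(\underbrace{1,\dots,1}_{\gamma_{\ell,1}},\underbrace{2,\dots,2}_{\gamma_{\ell,2}},\dots,\underbrace{t,\dots,t}_{\gamma_{\ell,t}},(t{+}1)\zeta_{\ell}^{\ell-1},\dots,t{+}1,\dots,(t{+}r_{\ell})\zeta_{\ell}^{\ell-1},\dots,t{+}r_{\ell}\bigr)\in V^n,
\]
where $\gamma_{\ell}=(\gamma_{\ell,1},\dots,\gamma_{\ell,t})$, and checks two things: that $w_{\ell,n}^{\mathrm{g}_{\ell}}\cdot v=\zeta_{\ell}v$, and that the stabilizer of $v$ in $\mathfrak{S}_n$ is exactly $\mathfrak{S}_{\mathrm{g}_{\ell}}$. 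Your phrase ``the main obstacle is checking that the BLM framework\dots specializes cleanly'' names this gap but does not close it; the proof is therefore incomplete where it most needs to be complete.

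Two further comments. First, the sentence ``the parabolic subgroup of $\mathfrak{S}_n$ fixing its fixed-point subspace is $\mathfrak{S}_{\mathrm{g}_{\ell}}$'' is not correct as stated: the fixed-point (i.e.\ $1$-eigen-) subspace of $w_{\ell,n}^{\mathrm{g}_{\ell}}$ in $V^n$ has dimension $\mathrm{g}_{\ell}+r_{\ell}$ and its pointwise stabilizer is not $\mathfrak{S}_{\mathrm{g}_{\ell}}$. What matters in BLM's regular-element formalism is the $\zeta_{\ell}$-eigenspace, and more precisely the stabilizer of a suitably generic vector in it — which is what the paper's choice of $v$ supplies. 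Second, the extra unfolding you propose — producing first an $(\mathfrak{S}_n\times\Delta)$-module decomposition, then applying Lemma~\ref{lemme_dcyc2} and induction in stages — does not appear in the paper's proof, which invokes \cite[Theorem 4.6]{BLM06} as giving the stated double-sum decomposition directly once the regular eigenvector is exhibited. That unfolding is the machinery the paper uses to derive Corollary~\ref{cor1_a} from Theorem~\ref{reduction_thm}, not to derive Proposition~\ref{BLM} from BLM's theorem; inserting it here is at best redundant and at worst suggests a misreading of what BLM's theorem already provides.
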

\begin{proof}
This result is a special case of \cite[Theorem $4.6$]{BLM06}. Let ${\gamma_{\ell}=(\gamma_{\ell,1},...,\gamma_{\ell,t}) \vdash \mathrm{g}_{\ell}}$. Take
\begin{equation*}
v=\left(1,..,1,2,..,2,...,t,...,t,(t+1)\zeta_{\ell}^{\ell \smin 1},...,t+1,...,(t+r_{\ell})\zeta_{\ell}^{\ell \smin 1},...,(t+r_{\ell})\right) \in V^n
\end{equation*}
where $1$ is repeated $\gamma_{\ell,1}$ times, $2$ is repeated $\gamma_{\ell,2}$ times and so on until $t$. The stabilizer of $v$ in $\mathfrak{S}_n$ is exactly $\mathfrak{S}_{\mathrm{g}_{\ell}}$ and $w_{\ell,n}^{\mathrm{g}_{\ell}}v=\zeta_{\ell} v$.
\end{proof}
\noindent For $\lambda \vdash n$, let us denote $a_{\mu,j}^{\ell}(\lambda):= \langle \mathrm{Res}^{\mathfrak{S}_n}_{W_{\ell,n}^{\mathrm{g}_{\ell}}}(V_{\lambda}), V_{\mu} \boxtimes \theta_{\ell}^j \rangle$ where $\mu \vdash \mathrm{g}_{\ell}(\lambda)$, $j \in \llbracket 0, \ell \smin 1 \rrbracket$.
\begin{prop}
\label{spring_gen}
For all partitions $\lambda$ of $n$
\begin{equation*}
F_{\lambda}(\tau_{\ell})=\sum_{\mu \vdash \mathrm{g}_{\ell}}{\sum_{j=0}^{\ell \smin 1}{a_{\mu,j}^{\ell}(\lambda) F_{\mu}(\tau_{\ell}) \tau_{\ell}^{-j}}}.
\end{equation*}
\end{prop}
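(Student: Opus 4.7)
The plan is to translate $F_\lambda(\tau_\ell)$ into a $\mu_\ell$-character and expand it via Proposition \ref{BLM}. Unpacking the definition of the fake degree (or equivalently pulling back along $\tau_\ell \colon \mu_\ell \to \mathbb{T}_1$ as in the proof of Proposition \ref{link_fake_proc}), one has
\[
F_\lambda(\tau_\ell) \;=\; \sum_{i=0}^{\ell-1} \tau_\ell^{\,i}\, \bigl\langle V_\lambda,\, (S(V^n)^{\mathrm{co}(\mathfrak{S}_n)})^{\ell}_{i} \bigr\rangle_{\mathfrak{S}_n},
\]
where $(-)^{\ell}_{i}$ denotes the $i$-th piece of the $\mathbb{Z}/\ell$-grading induced by the natural $\mathbb{Z}$-grading on $S(V^n)^{\mathrm{co}(\mathfrak{S}_n)}$. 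This is the essential translation: we recognise $F_\lambda(\tau_\ell)$ as the multiplicity of $V_\lambda$ in $S(V^n)^{\mathrm{co}(\mathfrak{S}_n)}$ weighted by the $\mu_\ell$-grading.

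Next, I would use Proposition \ref{BLM}, which identifies the $\tau_\ell^{\,i}$-isotypic component of $S(V^n)^{\mathrm{co}(\mathfrak{S}_n)}$, as an $\mathfrak{S}_n$-module, with the sum over $j$ of $\mathrm{Ind}_{W_{\ell,n}^{\mathrm{g}_{\ell}}}^{\mathfrak{S}_n}\bigl((S(V_{\mathrm{g}_{\ell}})^{\mathrm{co}(\mathfrak{S}_{\mathrm{g}_{\ell}})})^{\ell}_{j} \boxtimes \theta_\ell^{\,i-j}\bigr)$. Applying Frobenius reciprocity to the inner product and expanding $\mathrm{Res}^{\mathfrak{S}_n}_{W_{\ell,n}^{\mathrm{g}_{\ell}}}(V_\lambda) = \sum_{\mu,k} a_{\mu,k}^{\ell}(\lambda)\, V_\mu \boxtimes \theta_\ell^{\,k}$, only the term with $k \equiv i-j \pmod{\ell}$ survives. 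Substituting $k := i-j$ and swapping the order of summation, the inner sum over $j$ collapses into $\sum_{j} \tau_\ell^{\,j}\, \langle V_\mu, (S(V_{\mathrm{g}_{\ell}})^{\mathrm{co}(\mathfrak{S}_{\mathrm{g}_{\ell}})})^{\ell}_{j} \rangle_{\mathfrak{S}_{\mathrm{g}_{\ell}}}$, which is precisely $F_\mu(\tau_\ell)$, yielding the intermediate identity
\[
F_\lambda(\tau_\ell) \;=\; \sum_{\mu \vdash \mathrm{g}_{\ell}}\, \sum_{k=0}^{\ell-1} a_{\mu,k}^{\ell}(\lambda)\, F_\mu(\tau_\ell)\, \tau_\ell^{\,k}.
\]

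To reconcile this with the $\tau_\ell^{-j}$ appearing in the statement, I would invoke the self-duality of symmetric group irreducibles. Since $V_\lambda \cong V_\lambda^*$ as $\mathfrak{S}_n$-modules and $V_\mu \cong V_\mu^*$ as $\mathfrak{S}_{\mathrm{g}_{\ell}}$-modules, the restriction $\mathrm{Res}^{\mathfrak{S}_n}_{W_{\ell,n}^{\mathrm{g}_{\ell}}}(V_\lambda)$ is isomorphic to its $W_{\ell,n}^{\mathrm{g}_{\ell}}$-dual, and $(V_\mu \boxtimes \theta_\ell^{\,k})^{*} \cong V_\mu \boxtimes \theta_\ell^{-k}$. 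This forces $a_{\mu,k}^{\ell}(\lambda) = a_{\mu,-k}^{\ell}(\lambda)$ for every $k$, and reindexing $j := -k$ then converts the intermediate formula into the one stated in the proposition.

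The expected obstacle is modest: it lies in the bookkeeping between the $\mathbb{Z}/\ell$-grading (encoded by $\tau_\ell$) on the ambient side and the $\theta_\ell$-grading on the restriction side, which must be handled cleanly through Frobenius reciprocity. The one substantive observation is the sign reversal $\tau_\ell^{\,k} \leftrightarrow \tau_\ell^{-j}$, supplied by the self-duality of $\mathfrak{S}_n$-irreducibles; every other step is formal.
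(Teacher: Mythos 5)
Your argument is correct and tracks the paper's proof step for step: translate $F_\lambda(\tau_\ell)$ into a $\mu_\ell$-graded multiplicity, expand via Proposition~\ref{BLM}, and apply Frobenius reciprocity to bring in the coefficients $a^{\ell}_{\mu,k}(\lambda)$. The only cosmetic difference is that the paper expands $\langle V_{(n)}, V_\lambda\otimes\mathrm{Ind}(\cdot)\rangle$ via the projection formula, so that the $\theta_\ell^{k}$ enters the argument of the induction and the surviving constraint is $i-j+k=0$, giving $\tau_\ell^{-k}$ directly, whereas your direct application of Frobenius to $\langle V_\lambda,\mathrm{Ind}(\cdot)\rangle$ yields the constraint $k=i-j$ and hence $\tau_\ell^{+k}$, which you then correctly reconcile via $a^{\ell}_{\mu,k}(\lambda)=a^{\ell}_{\mu,-k}(\lambda)$ coming from self-duality of $\mathrm{Res}^{\mathfrak{S}_n}_{W_{\ell,n}^{\mathrm{g}_{\ell}}}(V_\lambda)$.
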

\begin{proof}
Let us start this proof by showing that
\begin{equation*}
F_{\lambda}(\tau_{\ell}) = \sum_{i=0}^{\ell \smin 1}{\langle V_{(n)}, V_{\lambda}\otimes \left(S(V^n)^{\mathrm{co}(\mathfrak{S}_n)}\right)_i^{\ell} \rangle \tau_{\ell}^i}.
\end{equation*}
Using Proposition \ref{BLM}, one has
\begin{align*}
F_{\lambda}(\tau_{\ell})  &= \sum_{i=0}^{\ell \smin 1}{\sum_{j=0}^{\ell \smin 1}{\langle V_{(n)}, V_{\lambda}\otimes \mathrm{Ind}_{W_{\ell,n}^{\mathrm{g}_{\ell}}}^{\mathfrak{S}_n}\left(\left(S(V_{\mathrm{g}_{\ell}})^{\mathrm{co}(\mathfrak{S}_{\mathrm{g}_{\ell}})}\right)_j^{\ell}\boxtimes \theta_{\ell}^{i-j}\right) \rangle \tau^i_{\ell}}}\\
  &= \sum_{i=0}^{\ell \smin 1}{\sum_{j=0}^{\ell \smin 1}{\sum_{k=0}^{\ell \smin 1}{\sum_{\mu \vdash \mathrm{g}_{\ell}}{a_{\mu,k}^{\ell}(\lambda)\langle V_{(n)}, \mathrm{Ind}_{W_{\ell,n}^{\mathrm{g}_{\ell}}}^{\mathfrak{S}_n}\left(\left(V_{\mu}\otimes\left(S(V_{\mathrm{g}_{\ell}})^{\mathrm{co}(\mathfrak{S}_{\mathrm{g}_{\ell}})}\right)_j^{\ell}\right)\boxtimes \theta_{\ell}^{i-j+k}\right) \rangle \tau^i_{\ell}}}}}.
\end{align*}
One can now use Frobenius reciprocity theorem
\begin{align*}
F_{\lambda}(\tau_{\ell})  &= \sum_{i=0}^{\ell \smin 1}{\sum_{j=0}^{\ell \smin 1}{\sum_{k=0}^{\ell \smin 1}{\sum_{\mu \vdash \mathrm{g}_{\ell}}{a_{\mu,k}^{\ell}(\lambda)\langle V_{(\mathrm{g}_{\ell})}\boxtimes \theta_{\ell}^0, \left(V_{\mu}\otimes\left(S(V_{\mathrm{g}_{\ell}})^{\mathrm{co}(\mathfrak{S}_{\mathrm{g}_{\ell}})}\right)_j^{\ell}\right)\boxtimes \theta_{\ell}^{i-j+k} \rangle \tau^i_{\ell}}}}}\\
    &= \sum_{i=0}^{\ell \smin 1}{\sum_{j=0}^{\ell \smin 1}{\sum_{k=0}^{\ell \smin 1}{\sum_{\mu \vdash \mathrm{g}_{\ell}}{a_{\mu,k}^{\ell}(\lambda)\delta_0^{i-j+k}\langle V_{(\mathrm{g}_{\ell})}, V_{\mu}\otimes\left(S(V_{\mathrm{g}_{\ell}})^{\mathrm{co}(\mathfrak{S}_{\mathrm{g}_{\ell}})}\right)_j^{\ell}\rangle \tau^i_{\ell}}}}}\\
    &= \sum_{j=0}^{\ell \smin 1}{\sum_{k=0}^{\ell \smin 1}{\sum_{\mu \vdash \mathrm{g}_{\ell}}{a_{\mu,k}^{\ell}(\lambda)\langle V_{(\mathrm{g}_{\ell})}, V_{\mu}\otimes\left(S(V_{\mathrm{g}_{\ell}})^{\mathrm{co}(\mathfrak{S}_{\mathrm{g}_{\ell}})}\right)_j^{\ell}\rangle \tau^{j-k}_{\ell}}}}\\
    &= \sum_{\mu \vdash \mathrm{g}_{\ell}}{\sum_{k=0}^{\ell \smin 1}{a_{\mu,k}^{\ell}(\lambda)\sum_{j=0}^{\ell \smin 1}{\langle V_{(\mathrm{g}_{\ell})}, V_{\mu}\otimes\left(S(V_{\mathrm{g}_{\ell}})^{\mathrm{co}(\mathfrak{S}_{\mathrm{g}_{\ell}})}\right)_j^{\ell}\rangle\tau_{\ell}^j \tau^{-k}_{\ell}}}}\\
    &= \sum_{\mu \vdash \mathrm{g}_{\ell}}{\sum_{k=0}^{\ell \smin 1}{a_{\mu,k}^{\ell}(\lambda)F_{\mu}(\tau_{\ell})\tau^{-k}_{\ell}}}.
\end{align*}
\end{proof}

\noindent For the remainder of this subsection, let us suppose that the fixed integer $\ell$ is prime.
\begin{lemme}
\label{neq0_<l}
For each $\lambda \in \mathcal{P}_n^{<\ell}$ and each $k \in \llbracket 0, \ell \smin 1 \rrbracket, F_{\lambda}(\zeta_{\ell}^k) \neq 0$.
\end{lemme}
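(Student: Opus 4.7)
The plan is to reduce the non-vanishing of $F_\lambda(\zeta_\ell^k)$ to a single $\Phi_d$-valuation computation by exploiting the fact that $\ell$ is prime. For $k=0$, $F_\lambda(1) = \dim V_\lambda$, which is strictly positive (by the hook-length formula applied to the expression in Lemma~\ref{fk_deg}, or by recognizing $F_\lambda$ as the fake degree). For $k \in \llbracket 1, \ell\smin 1 \rrbracket$, since $\ell$ is prime, $\gcd(k,\ell)=1$, so $\zeta_\ell^k$ is a primitive $\ell$-th root of unity; therefore $F_\lambda(\zeta_\ell^k) = 0$ if and only if the cyclotomic polynomial $\Phi_\ell$ divides $F_\lambda$ in $\mathbb{Q}[q]$. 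Hence the only thing to prove is that $v_{\Phi_\ell}(F_\lambda) = 0$.

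Using Lemma~\ref{fk_deg}, I would compute
\[
v_{\Phi_\ell}(F_\lambda) = \#\{i \in \llbracket 1, n\rrbracket : \ell \mid i\} - \#\{c \in \mathcal{Y}(\lambda) : \ell \mid h_c(\lambda)\},
\]
exactly as at the beginning of the proof of Lemma~\ref{neq0_0}. The first term is $\lfloor n/\ell\rfloor$, and since $\lambda \in \mathcal{P}_n^{<\ell}$ we have $n = \mathrm{g}_\ell + r_\ell \ell$ with $\mathrm{g}_\ell < \ell$, so $\lfloor n/\ell\rfloor = r_\ell$. For the second term I would appeal to \cite[Proposition $3.6$]{Ol93} (the same tool used in Lemma~\ref{neq0_0}), which via the $\ell$-core/$\ell$-quotient bijection identifies cells of $\lambda$ whose hook length is divisible by $\ell$ with the cells of the $\ell$-quotient of $\lambda$, giving $\#\{c : \ell \mid h_c(\lambda)\} = r_\ell$ (since the $\ell$-core contributes no hook of length a multiple of $\ell$). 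Subtracting yields $v_{\Phi_\ell}(F_\lambda) = 0$, as desired.

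The main step—really the only non-routine one—is invoking Olsson's proposition to evaluate $\#\{c \in \mathcal{Y}(\lambda) : \ell \mid h_c(\lambda)\}$. Note that, unlike in Lemma~\ref{neq0_0}, I do not need to invoke Lemma~\ref{j,l_core} for smaller divisors of $\ell$: the primality of $\ell$ removes this complication entirely, so the argument is actually cleaner than in the $\gamma_\ell \in \{\emptyset,(1)\}$ case, at the cost of restricting the allowed $\ell$. In summary, the proof reduces to two sentences once Lemma~\ref{fk_deg} and Olsson's bijection are in hand, and the key conceptual point is simply that $\mathrm{g}_\ell < \ell$ guarantees the hook-count in $\lambda$ matches the multiplicity of $\ell$ in $n!$.
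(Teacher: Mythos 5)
Your proof is correct and takes essentially the same route as the paper: the paper's proof also reduces, via primality of $\ell$, to showing $v_{\Phi_\ell}(F_\lambda)=0$, and then invokes $\mathrm{g}_\ell < \ell$ to rerun the Olsson-based hook count from Lemma~\ref{neq0_0}. You have simply made explicit the computation $\lfloor n/\ell\rfloor = r_\ell = \#\{c : \ell \mid h_c(\lambda)\}$ that the paper compresses into a one-line appeal to the earlier argument.
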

\begin{proof}
Since $\ell$ is prime it is enough to show that $v_{\Phi_{\ell}}(F_{\lambda})=0$. Using the fact that ${\mathrm{g}_{\ell}<\ell}$, one can use the same argument as in Lemma \ref{neq0_0}.
\end{proof}

\begin{lemme}
\label{equal_not_gl}
Take $\lambda$ a partition of $n$. For all $\mu \in \mathcal{P}_{\mathrm{g}_{\ell}} \setminus \{\gamma_{\ell}\}$,
\begin{equation*}
a_{\mu,j}^{\ell}(\lambda)=a_{\mu,0}^{\ell}(\lambda), \quad \forall j \in \llbracket 0, \ell \smin 1 \rrbracket.
\end{equation*}
\end{lemme}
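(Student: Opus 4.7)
The plan is to compute $a_{\mu,j}^{\ell}(\lambda) - a_{\mu,0}^{\ell}(\lambda)$ directly using the character formula for the inner product. By definition,
\begin{equation*}
a_{\mu,j}^{\ell}(\lambda) = \frac{1}{\mathrm{g}_{\ell}! \cdot \ell} \sum_{\sigma \in \mathfrak{S}_{\mathrm{g}_{\ell}}} \sum_{k=0}^{\ell - 1} \chi_{\lambda}\bigl(\sigma \cdot (w_{\ell,n}^{\mathrm{g}_{\ell}})^k\bigr)\, \chi_{\mu}(\sigma^{-1})\, \zeta_{\ell}^{-jk}.
\end{equation*}
The primality of $\ell$ is used crucially here: every power $(w_{\ell,n}^{\mathrm{g}_{\ell}})^k$ with $1 \leq k \leq \ell - 1$ is a product of $r_{\ell}$ disjoint $\ell$-cycles on the letters $\{\mathrm{g}_{\ell}+1, \dots, n\}$, so has exactly the same cycle type as $w_{\ell,n}^{\mathrm{g}_{\ell}}$. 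Therefore $\chi_{\lambda}\bigl(\sigma \cdot (w_{\ell,n}^{\mathrm{g}_{\ell}})^k\bigr)$ is independent of $k \in \llbracket 1, \ell \smin 1 \rrbracket$, and I denote this common value by $\psi_{\lambda}(\sigma)$. Using $\sum_{k=1}^{\ell - 1}\zeta_{\ell}^{-jk} = -1$ when $j \not\equiv 0 \pmod{\ell}$, subtracting gives
\begin{equation*}
a_{\mu,j}^{\ell}(\lambda) - a_{\mu,0}^{\ell}(\lambda) = -\frac{1}{\mathrm{g}_{\ell}!}\sum_{\sigma \in \mathfrak{S}_{\mathrm{g}_{\ell}}} \psi_{\lambda}(\sigma)\, \chi_{\mu}(\sigma^{-1}).
\end{equation*}

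Next, I would analyze $\psi_{\lambda}(\sigma)$ using the Murnaghan--Nakayama rule. Since $\sigma$ is supported on $\{1,\dots,\mathrm{g}_{\ell}\}$ while $w_{\ell,n}^{\mathrm{g}_{\ell}}$ is a product of $r_{\ell}$ disjoint $\ell$-cycles supported on the complementary set, iterated application of Murnaghan--Nakayama yields
\begin{equation*}
\psi_{\lambda}(\sigma) = \sum_{\nu \vdash \mathrm{g}_{\ell}} \epsilon_{\lambda}^{\ell}(\nu)\, \chi_{\nu}(\sigma),
\end{equation*}
where the sum runs over all partitions $\nu$ obtainable from $\lambda$ by successive removal of $r_{\ell}$ ribbons of length $\ell$, and $\epsilon_{\lambda}^{\ell}(\nu)$ records the signed count of such removal sequences. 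The key combinatorial input is that removing an $\ell$-ribbon from a partition preserves its $\ell$-core; hence any $\nu$ appearing in the sum satisfies $\gamma_{\ell}(\nu) = \gamma_{\ell}$, and since $|\nu| = \mathrm{g}_{\ell} = |\gamma_{\ell}|$, the only possibility is $\nu = \gamma_{\ell}$. Thus $\psi_{\lambda}(\sigma) = \epsilon_{\lambda}^{\ell}\, \chi_{\gamma_{\ell}}(\sigma)$ for a single integer $\epsilon_{\lambda}^{\ell}$.

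Substituting back and applying orthogonality of the irreducible characters of $\mathfrak{S}_{\mathrm{g}_{\ell}}$ gives
\begin{equation*}
a_{\mu,j}^{\ell}(\lambda) - a_{\mu,0}^{\ell}(\lambda) = -\epsilon_{\lambda}^{\ell}\, \langle \chi_{\gamma_{\ell}}, \chi_{\mu} \rangle_{\mathfrak{S}_{\mathrm{g}_{\ell}}},
\end{equation*}
which vanishes whenever $\mu \neq \gamma_{\ell}$. The only step I expect to require care is the precise bookkeeping in the Murnaghan--Nakayama expansion; all other ingredients are purely formal. Primality of $\ell$ enters only once, to collapse the cycle types of $(w_{\ell,n}^{\mathrm{g}_{\ell}})^k$ for $k = 1, \dots, \ell - 1$, and the core-preservation property of $\ell$-ribbon removals holds for arbitrary $\ell$.
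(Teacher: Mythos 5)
Your proof is correct and takes essentially the same approach as the paper: both use the Murnaghan--Nakayama rule to reduce $\chi_{\lambda}\bigl(\sigma\,(w_{\ell,n}^{\mathrm{g}_{\ell}})^k\bigr)$ to a fixed scalar multiple of $\chi_{\gamma_{\ell}}(\sigma)$ for all $k\in\llbracket 1,\ell\smin 1\rrbracket$ (primality of $\ell$ guaranteeing a single cycle type, hence a single scalar), and then character orthogonality annihilates the $\mu\neq\gamma_{\ell}$ terms. The only difference is bookkeeping: you compute $a_{\mu,j}^{\ell}(\lambda)-a_{\mu,0}^{\ell}(\lambda)$ directly via the geometric sum $\sum_{k=1}^{\ell-1}\zeta_{\ell}^{-jk}=-1$, whereas the paper first establishes $\sum_{j}a_{\mu,j}^{\ell}(\lambda)\zeta_{\ell}^{j}=0$ and then concludes from linear independence of $\{\zeta_{\ell}^{j}\}_{j=1}^{\ell-1}$ over $\mathbb{Q}$.
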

\begin{proof}
The Murnaghan–Nakayama recursive formula gives the following result
\begin{equation*}
\exists a \in \mathbb{Z}, \forall i \in \llbracket 1, \ell \smin 1 \rrbracket, \forall x \in \mathfrak{S}_{\mathrm{g}_{\ell}}, \chi_{\lambda}\left(x{\left(w_{\ell,n}^{\mathrm{g}_{\ell}}\right)}^i\right)=a\chi_{\gamma_{\ell}}(x)
\end{equation*}
We deduce that:
\begin{equation*}
 \sum_{j=0}^{\ell \smin 1}{a_{\mu,j}^{\ell}(\lambda)\zeta_{\ell}^j=0}, \quad \forall \mu \in \mathcal{P}_{\mathrm{g}_{\ell}}\setminus \{\gamma_{\ell}\}.
\end{equation*}
Indeed,
\begin{align*}
&\sum_{j=0}^{\ell \smin 1}{a_{\mu,j}^{\ell}(\lambda) \zeta_{\ell}^j} = \frac{1}{|W_{\ell,n}^{\mathrm{g}_{\ell}}|}\sum_{j=0}^{\ell \smin 1}{\sum_{i=0}^{\ell\smin 1}{\sum_{x \in \mathfrak{S}_{\mathrm{g}_{\ell}}}{\chi_{\lambda}\left(x{\left(w_{\ell,n}^{\mathrm{g}_{\ell}}\right)}^i\right)\chi_{\mu}(x)\theta_{\ell}^{-ij+j}\left(w_{\ell,n}^{\mathrm{g}_{\ell}}\right)}}}\\
 &= \frac{1}{|W_{\ell,n}^{\mathrm{g}_{\ell}}|}\left(\sum_{j=0}^{\ell \smin 1}{\sum_{x \in \mathfrak{S}_{\mathrm{g}_{\ell}}}{\chi_{\lambda}(x)\chi_{\mu}(x)\theta_{\ell}^j\left(w_{\ell,n}^{\mathrm{g}_{\ell}}\right)}}+ \sum_{j=0}^{\ell \smin 1}{\sum_{i=1}^{\ell \smin 1}{\sum_{x \in \mathfrak{S}_{\mathrm{g}_{\ell}}}{\chi_{\lambda}\left(x{\left(w_{\ell,n}^{\mathrm{g}_{\ell}}\right)}^i\right)\chi_{\mu}(x)\theta_{\ell}^{-ij+j}\left(w_{\ell,n}^{\mathrm{g}_{\ell}}\right)}}}\right)\\
 &= \langle\mathrm{Res}^{\mathfrak{S}_n}_{\mathfrak{S}_{\mathrm{g}_{\ell}}}(\chi_{\lambda}),\chi_{\mu}\rangle\frac{1}{\ell}\sum_{j=0}^{\ell \smin 1}{\theta_{\ell}^j\left(w_{\ell,n}^{\mathrm{g}_{\ell}}\right)} + \frac{a}{|W_{\ell,n}^{\mathrm{g}_{\ell}}|}\sum_{j=0}^{\ell \smin 1}{\sum_{i=1}^{\ell \smin 1}{\sum_{x \in \mathfrak{S}_{\mathrm{g}_{\ell}}}{\chi_{\gamma_{\ell}}(x)\chi_{\mu}(x)\theta_{\ell}^{-ij+j}\left(w_{\ell,n}^{\mathrm{g}_{\ell}}\right)}}}\\
 &= \langle\mathrm{Res}^{\mathfrak{S}_n}_{\mathfrak{S}_{\mathrm{g}_{\ell}}}(\chi_{\lambda}),\chi_{\mu}\rangle\frac{1}{\ell}\sum_{j=0}^{\ell \smin 1}{\theta_{\ell}^j\left(w_{\ell,n}^{\mathrm{g}_{\ell}}\right)} + a\frac{\langle \chi_{\gamma_{\ell}},\chi_{\mu}\rangle}{\ell}\sum_{j=0}^{\ell \smin 1}{\sum_{i=1}^{\ell \smin1}{\theta_{\ell}^{-ij+j}\left(w_{\ell,n}^{\mathrm{g}_{\ell}}\right)}}.
\end{align*}
\noindent The first term is equal to $0$ since $\theta_{\ell}\left(w_{\ell,n}^{\mathrm{g}_{\ell}}\right)=\zeta_{\ell}$ and since $\mu \neq \gamma_{\ell}$, one has $\langle \chi_{\gamma_{\ell}},\chi_{\mu}\rangle = 0$.
Thus $\sum_{j=1}^{\ell \smin 1}{\left(a_{\mu,j}^{\ell}(\lambda)-a_{\mu,0}^{\ell}(\lambda)\right)\zeta_{\ell}^j=0}$ which then gives the result since $\ell$ is prime.
\end{proof}

\noindent One is now able to prove Corollary \ref{cor1_a} in this case.

\begin{prop}
For each $\lambda \in \mathcal{P}_n^{<\ell}$, Corollary \ref{cor1_a} holds.
\end{prop}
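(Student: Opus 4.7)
The plan is to mirror the strategy used for the $\mathcal{P}_{n,\ell}^o$ case in the previous subsection, replacing the role of the trivial $S(V^{\mathrm{g}_\ell})^{\mathrm{co}(\mathfrak{S}_{\mathrm{g}_\ell})}$ by its full form and then relating it back to $\mathscr{P}^{\mathrm{g}_\ell}_{\gamma_\ell}$ via Proposition~\ref{link_fake_proc}. Throughout, set $P := \mathscr{P}^{\mathrm{g}_\ell}_{\gamma_\ell}$ and write $G := \sum_{i,j=0}^{\ell \smin 1}[\mathrm{Ind}_{W_{\ell,n}^{\mathrm{g}_\ell}}^{\mathfrak{S}_n}(P^\ell_j \boxtimes \theta_\ell^{i-j}) \boxtimes \tau_\ell^i]$ for the claimed right-hand side of Corollary~\ref{cor1_a}.

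First I would apply Proposition~\ref{link_fake_proc} to reduce the goal to showing the identity $[S(V^n)^{\mathrm{co}(\mathfrak{S}_n)} \otimes V_\lambda] = F_\lambda(\tau_\ell) \cdot G$ in $\mathcal{R}(\mathfrak{S}_n) \boxtimes \mathcal{R}(\mu_\ell)$; Lemma~\ref{neq0_<l} then ensures that $F_\lambda(\tau_\ell)$ is a non-zero-divisor (it is invertible after tensoring with $\mathbb{Q}$), so the final cancellation is legitimate. To compute the left-hand side, expand using Proposition~\ref{BLM}, the standard tensor-induction formula $\mathrm{Ind}_H^G(A) \otimes V = \mathrm{Ind}_H^G(A \otimes \mathrm{Res}_H^G V)$, and the decomposition $\mathrm{Res}^{\mathfrak{S}_n}_{W_{\ell,n}^{\mathrm{g}_\ell}}(V_\lambda) = \sum_{\mu,k} a_{\mu,k}^\ell(\lambda)\, V_\mu \boxtimes \theta_\ell^k$, then split the result by whether $\mu = \gamma_\ell$ or $\mu \neq \gamma_\ell$.

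For $\mu \neq \gamma_\ell$, Lemma~\ref{equal_not_gl} (where the hypothesis $\ell$ prime is used) forces $a_{\mu,k}^\ell(\lambda) = a_{\mu,0}^\ell(\lambda)$; the $k$-sum then collapses to a regular character of $C_{\ell,n}$, and induction in stages plus the Chevalley--Shephard--Todd identification $[S(V^{\mathrm{g}_\ell})^{\mathrm{co}(\mathfrak{S}_{\mathrm{g}_\ell})}]_{\mathfrak{S}_{\mathrm{g}_\ell}} = [\mathbb{C}[\mathfrak{S}_{\mathrm{g}_\ell}]]$ reduce this piece to $D \cdot [\mathbb{C}[\mathfrak{S}_n]] \boxtimes [\mathbb{C}[\mu_\ell]]$, where $D := \sum_{\mu \neq \gamma_\ell} a_{\mu,0}^\ell(\lambda) \dim V_\mu$. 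For $\mu = \gamma_\ell$, applying Proposition~\ref{link_fake_proc} at $\gamma_\ell$ rewrites $S(V^{\mathrm{g}_\ell})^{\mathrm{co}(\mathfrak{S}_{\mathrm{g}_\ell})} \otimes V_{\gamma_\ell}$ in terms of $P$ with a twist by the class $F_{\gamma_\ell}(\tau_\ell)$, and after relabelling the indices $i,j,k$ and the isotypic shift coming from multiplication by $F_{\gamma_\ell}(\tau_\ell)$, this contribution collapses into $F_{\gamma_\ell}(\tau_\ell)\,\tilde A_\lambda \cdot G$, where $\tilde A_\lambda := \sum_k a_{\gamma_\ell,k}^\ell(\lambda)\,\tau_\ell^{-k}$.

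To match the full expansion with $F_\lambda(\tau_\ell) \cdot G$, Proposition~\ref{spring_gen} together with Lemma~\ref{equal_not_gl} and the absorption identity $F_\mu(\tau_\ell) \cdot [\mathbb{C}[\mu_\ell]] = F_\mu(1)\cdot[\mathbb{C}[\mu_\ell]] = \dim(V_\mu)\cdot[\mathbb{C}[\mu_\ell]]$ yield the key equality $F_\lambda(\tau_\ell) = F_{\gamma_\ell}(\tau_\ell)\tilde A_\lambda + D\cdot[\mathbb{C}[\mu_\ell]]$ in $\mathcal{R}(\mu_\ell)$. A direct summation over the $\mu_\ell$-variable also verifies $[\mathbb{C}[\mu_\ell]] \cdot G = [\mathbb{C}[\mathfrak{S}_n]] \boxtimes [\mathbb{C}[\mu_\ell]]$, using the fact that $[P]_{\mathfrak{S}_{\mathrm{g}_\ell}}$ is the regular representation. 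The $\mu = \gamma_\ell$ and $\mu \neq \gamma_\ell$ pieces therefore account respectively for $F_{\gamma_\ell}(\tau_\ell)\tilde A_\lambda \cdot G$ and $D\cdot[\mathbb{C}[\mu_\ell]]\cdot G$, giving the desired equality on the nose. The hard part will be the bookkeeping in the $\mu = \gamma_\ell$ contribution: tracking how $F_{\gamma_\ell}(\tau_\ell)$ acts on the $\mu_\ell$-grading of $P$, and making sure the triple relabelling of $i$, $j$ and $k$ (combined with the shifts coming from the isotypic decomposition) produces exactly $F_{\gamma_\ell}(\tau_\ell)\tilde A_\lambda \cdot G$ rather than a rearranged variant.
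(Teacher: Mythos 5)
Your proposal is correct and follows essentially the same route as the paper: reduce to $[S(V^n)^{\mathrm{co}(\mathfrak{S}_n)}\otimes V_\lambda]=F_\lambda(\tau_\ell)\cdot G$ via Proposition~\ref{link_fake_proc}, expand with Proposition~\ref{BLM} and the restriction coefficients $a_{\mu,k}^\ell(\lambda)$, split by $\mu=\gamma_\ell$ versus $\mu\neq\gamma_\ell$, invoke Lemma~\ref{equal_not_gl} (with $\ell$ prime) and the Procesi-bundle-is-regular observation for the $\mu\neq\gamma_\ell$ pieces, apply Proposition~\ref{link_fake_proc} at $\gamma_\ell$ for the remaining piece, and tie things together with Proposition~\ref{spring_gen} before cancelling $F_\lambda(\tau_\ell)$ by Lemma~\ref{neq0_<l}. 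Your reorganization into the scalar identity $F_\lambda(\tau_\ell)=F_{\gamma_\ell}(\tau_\ell)\tilde A_\lambda+D\cdot[\mathbb{C}[\mu_\ell]]$ plus the absorption identity $[\mathbb{C}[\mu_\ell]]\cdot G=[\mathbb{C}[\mathfrak{S}_n]]\boxtimes[\mathbb{C}[\mu_\ell]]$ is a slightly more modular packaging of the same computation (the paper instead replaces $\mathscr{P}^{\mathrm{g}_\ell}_\mu$ by $\mathscr{P}^{\mathrm{g}_\ell}_{\gamma_\ell}$ inside the regular-representation term and then applies Proposition~\ref{spring_gen} to the combined sum), but the ingredients and the key insight are identical.
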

\begin{proof}
For a given finite group $G$, let $\mathrm{Reg}(G)$ denote the regular representation of $G$.  We wish to show the following equality of $(\mathfrak{S}_n \times \mu_{\ell})$-modules
\begin{equation*}
\left[\mathscr{P}^n_{\lambda}\right]=\sum_{i=0}^{\ell \smin 1}{\sum_{j=0}^{\ell \smin 1}{\left[\mathrm{Ind}_{W_{\ell,n}^{\mathrm{g}_{\ell}}}^{\mathfrak{S}_n}\left(\left(\mathscr{P}^{\mathrm{g}_{\ell}}_{\gamma_{\ell}}\right)^{\ell}_j\boxtimes\theta_{\ell}^{i-j}\right)\boxtimes \tau_{\ell}^i\right]}}.
\end{equation*}
Using Proposition \ref{BLM}, the right-hand side of the second equality of Proposition \ref{link_fake_proc} can be rewritten as
\begin{equation}
\label{rewritenRHS}
\sum_{\mu \vdash \mathrm{g}_{\ell}}{\sum_{k=0}^{\ell \smin 1}{\sum_{j=0}^{\ell \smin 1}{\sum_{i=0}^{\ell \smin 1}{a_{\mu,k}^{\ell}(\lambda)\left[\mathrm{Ind}^{\mathfrak{S}_n}_{W_{\ell,n}^{\mathrm{g}_{\ell}}}\left(\left(V_{\mu}\otimes \left(S(V_{\mathrm{g}_{\ell}})^{\mathrm{co}(\mathfrak{S}_{\mathrm{g}_{\ell}})}\right)^{\ell}_j\right)\boxtimes \theta_{\ell}^{i-j+k}\right)\boxtimes \tau_{\ell}^i\right]}}}}.
\end{equation}
Let us fix  $\mu \in \mathcal{P}_{\mathrm{g}_{\ell}}\setminus \{\gamma_{\ell}\}$ and consider the associated term in (\ref{rewritenRHS}). Using Lemma \ref{equal_not_gl}, this term is equal to
\begin{equation*}
a_{\mu,0}^{\ell}(\lambda)\sum_{j=0}^{\ell \smin 1}{\left[\mathrm{Ind}^{\mathfrak{S}_n}_{W_{\ell,n}^{\mathrm{g}_{\ell}}}\left(\left(V_{\mu}\otimes \left(S(V_{\mathrm{g}_{\ell}})^{\mathrm{co}(\mathfrak{S}_{\mathrm{g}_{\ell}})}\right)^{\ell}_j\right)\boxtimes \mathrm{Reg}(C_{\ell,n}) \right) \boxtimes \mathrm{Reg}(\mu_{\ell})\right]}.
\end{equation*}
Denote for all $\nu \vdash \mathrm{g}_{\ell}, F_{\nu}(\tau_{\ell})=\sum_{k=0}^{\ell \smin 1}{f_{\nu,k}\tau_{\ell}^k}$. Applying the second equality of Proposition \ref{link_fake_proc} for $\mu$, gives us
\begin{equation*}
a_{\mu,0}^{\ell}(\lambda)\sum_{k=0}^{\ell \smin 1}{\sum_{j=0}^{\ell \smin 1}{f_{\mu,k}\left[\mathrm{Ind}^{\mathfrak{S}_n}_{W_{\ell,n}^{\mathrm{g}_{\ell}}}\left(\left(\mathscr{P}^{\mathrm{g}_{\ell}}_{\mu}\right)^{\ell}_{j-k}\boxtimes \mathrm{Reg}(C_{\ell,n}) \right)\boxtimes \mathrm{Reg}(\mu_{\ell})\right]}}.
\end{equation*}
By construction of the Procesi bundle, $\sum_{j=0}^{\ell \smin 1}{\left[(\mathscr{P}^{\mathrm{g}_{\ell}}_{\nu})^{\ell}_j\right]}=\left[\mathrm{Reg}(\mathfrak{S}_{\mathrm{g}_{\ell}})\right]$ and by definition of the fake degree $\sum_{j=0}^{\ell \smin 1}{f_{\nu,k}}=\dim(V_{\nu})$. Summing everything up leads to
\begin{equation*}
a_{\mu,0}^{\ell}(\lambda)\sum_{k=0}^{\ell \smin 1}{\sum_{j=0}^{\ell \smin 1}{f_{\mu,k}\left[\mathrm{Ind}^{\mathfrak{S}_n}_{W_{\ell,n}^{\mathrm{g}_{\ell}}}\left(\left(\mathscr{P}^{\mathrm{g}_{\ell}}_{\mu}\right)^{\ell}_{j-k}\boxtimes \mathrm{Reg}\left(C_{\ell,n}\right)\right)\boxtimes \mathrm{Reg}(\mu_{\ell})\right]}},
\end{equation*}
which is equal to
\begin{equation*}
a_{\mu,0}^{\ell}(\lambda)\dim(V_{\mu})\left[\mathrm{Reg}(\mathfrak{S}_n\times \mu_{\ell})\right].
\end{equation*}

\noindent The last equality holds for the fiber of the Procesi bundle over $I_{\mu}$ for any partition $\mu$ of $\mathrm{g}_{\ell}$. In particular, it holds for $I_{\gamma_{\ell}}$. One gets that the term
\begin{equation*}
\sum_{i=0}^{\ell \smin 1}{\sum_{j=0}^{\ell \smin 1}{\sum_{k=0}^{\ell \smin 1}{a_{\mu,k}^{\ell}(\lambda)\left[\mathrm{Ind}^{\mathfrak{S}_n}_{W_{\ell,n}^{\mathrm{g}_{\ell}}}\left(\left(V_{\mu}\otimes \left(S(V_{\mathrm{g}_{\ell}})^{\mathrm{co}(\mathfrak{S}_{\mathrm{g}_{\ell}})}\right)^{\ell}_j\right)\boxtimes \theta_{\ell}^{i-j+k}\right) \boxtimes \tau_{\ell}^i\right]}}}
\end{equation*}
is equal to
\begin{equation*}
a_{\mu,0}^{\ell}(\lambda)\sum_{k=0}^{\ell \smin 1}{\sum_{j=0}^{\ell \smin 1}{f_{\mu,k}\left[\mathrm{Ind}^{\mathfrak{S}_n}_{W_{\ell,n}^{\mathrm{g}_{\ell}}}\left(\left(\mathscr{P}^{\mathrm{g}_{\ell}}_{\gamma_{\ell}}\right)^{\ell}_{j-k} \boxtimes \mathrm{Reg}(C_{\ell,n})\right)\boxtimes \mathrm{Reg}(\mu_{\ell})\right]}},
\end{equation*}
which can be rewritten as
\begin{equation*}
\sum_{i=0}^{\ell \smin 1}{\sum_{j=0}^{\ell \smin 1}{\sum_{k=0}^{\ell \smin 1}{a_{\mu,k}^{\ell}(\lambda)\left[\mathrm{Ind}^{\mathfrak{S}_n}_{W_{\ell,n}^{\mathrm{g}_{\ell}}}\left(\left(\mathscr{P}^{\mathrm{g}_{\ell}}_{\gamma_{\ell}}\right)^{\ell}_j\boxtimes \left(\theta_{\ell}^{i-j}F_{\mu}\left(\theta_{\ell}^{-1}\right)\theta_{\ell}^{k}\right)\right)\boxtimes \tau_{\ell}^i\right]}}}.
\end{equation*}
Finally, for $\mu=\gamma_{\ell}$,
\begin{equation*}
\sum_{i=0}^{\ell \smin 1}{\sum_{j=0}^{\ell \smin1}{\sum_{k=0}^{\ell \smin1}{a_{\gamma_{\ell},k}^{\ell}(\lambda)\left[\mathrm{Ind}^{\mathfrak{S}_n}_{W_{\ell,n}^{\mathrm{g}_{\ell}}}\left(\left(V_{\gamma_{\ell}}\otimes \left(S(V_{\mathrm{g}_{\ell}})^{\mathrm{co}(\mathfrak{S}_{\mathrm{g}_{\ell}})}\right)^{\ell}_j\right)\boxtimes \theta_{\ell}^{i-j+k}\right)\boxtimes \tau_{\ell}^i\right]}}}
\end{equation*}
is equal to
\begin{equation*}
\sum_{i=0}^{\ell \smin 1}{\sum_{j=0}^{\ell \smin 1}{\sum_{k=0}^{\ell \smin1}{a_{\gamma_{\ell},k}^{\ell}(\lambda)\left[\mathrm{Ind}^{\mathfrak{S}_n}_{W_{\ell,n}^{\mathrm{g}_{\ell}}}\left(\left(\mathscr{P}^{\mathrm{g}_{\ell}}_{\gamma_{\ell}}\right)^{\ell}_j\boxtimes \left(\theta_{\ell}^{i-j}F_{\gamma_{\ell}}\left(\theta_{\ell}^{-1}\right)\theta_{\ell}^{k}\right)\right)\boxtimes \tau_{\ell}^i\right]}}}.
\end{equation*}

\noindent By putting the pieces back together, and using Proposition \ref{spring_gen}, one gets
\begin{equation*}
\sum_{i=0}^{\ell \smin1}{\sum_{j=0}^{\ell \smin1}{\left[\mathrm{Ind}^{\mathfrak{S}_n}_{W_{\ell,n}^{\mathrm{g}_{\ell}}}\left(\left(\mathscr{P}^{\mathrm{g}_{\ell}}_{\gamma_{\ell}}\right)^{\ell}_j\boxtimes \left(\theta_{\ell}^{i-j}F_{\lambda}\left(\theta_{\ell}^{-1}\right)\right)\right)\boxtimes \tau_{\ell}^i\right]}},
\end{equation*}
is equal to
\begin{equation*}
F_{\lambda}(\tau_{\ell})\sum_{i=0}^{\ell \smin1}{\sum_{j=0}^{\ell \smin 1}{\left[\mathrm{Ind}^{\mathfrak{S}_n}_{W_{\ell,n}^{\mathrm{g}_{\ell}}}\left(\left(\mathscr{P}^{\mathrm{g}_{\ell}}_{\gamma_{\ell}}\right)^{\ell}_j\boxtimes \theta_{\ell}^{i-j}\right)\boxtimes \tau_{\ell}^i\right]}},
\end{equation*}
which completes the proof, after applying Lemma \ref{neq0_<l}.
\end{proof}

\section{Combinatorial consequences in type $D$}

In this last section, we consider the case where $\Gamma$ is of type $D$. Let us fix an integer $\ell=4l \geq 1$ divisible by $4$. Let $s$ be an element of $\mathrm{SL}_2(\mathbb{C})$ with integer coefficients and diagonal coefficients equal to $0$ and denote $BD_{\ell}$ the finite subgroup of $\mathrm{SL}_2(\mathbb{C})$ generated by $\omega_{2l}$ and $s$. The order of $BD_{\ell}$ is $\ell$. Recall that $\tau_{2l}$ is the character of $\mu_{2l}$ that maps $\omega_{2l}$ to $\zeta_{2l}$. For $i \in \mathbb{Z}$, consider $\chi_i:=\mathrm{Ind}_{\mu_{2l}}^{BD_{\ell}}{\tau_{2l}^i}$. Note that $\chi_i$ is irreducible if and only if $i$ is not congruent to $0$ or $l$ modulo $2l$. If $l$ is even, the character table of $BD_{\ell}$ is

\begin{center}
\begin{tabular}{|c |c |>{\centering\arraybackslash}c|>{\centering\arraybackslash}c |>{\centering\arraybackslash}c|>{\centering\arraybackslash}c|}
\hline
order & $1$ & $1$ & $2$ & $l$ & $l$ \\
\hline
classes & $\begin{pmatrix} 1 & 0 \\ 0 & 1 \end{pmatrix}$ & $\begin{pmatrix} \smin 1 & 0 \\ 0 & \smin 1 \end{pmatrix}$ & $\omega_{2l}^p (0 < p < l)$ & $s$ & $s\omega_{2l}$ \\
\hhline{|=|=|=|=|=|=|}
$\chi_{0^+}$ & $1$ & $1$ & $1$ & $1$ & $1$\\
\hline
$\chi_{0^-}$ & $1$ & $1$ & $1$ & $\smin 1$ & $\smin 1$\\
\hline
$\chi_{l^+}$ & $1$ & $1$ & $(\smin 1)^p$ & $\smin 1$ & $1$\\
\hline
$\chi_{l^-}$ & $1$ & $1$ & $(\smin 1)^p$ & $1$ & $\smin 1$\\
\hline
\thead{$\chi_k$ \\ $( 0<k<l)$}  & $2$ & $(\smin 1)^k2$ & $2 \mathrm{cos}(\frac{kp\pi}{l})$ & $0$ & $0$\\
\hline
\end{tabular}
\end{center}

\noindent If $l$ is odd, the character table of $BD_{\ell}$ is

\begin{center}
\begin{tabular}{|c |c |>{\centering\arraybackslash}c|>{\centering\arraybackslash}c |>{\centering\arraybackslash}c|>{\centering\arraybackslash}c|}
\hline
order & $1$ & $1$ & $2$ & $l$ & $l$ \\
\hline
classes & $\begin{pmatrix} 1 & 0 \\ 0 & 1 \end{pmatrix}$ & $\begin{pmatrix} \smin 1 & 0 \\ 0 & \smin 1 \end{pmatrix}$ & $\omega_{2l}^p (0 < p < l)$ & $s$ & $s\omega_{2l}$ \\
\hhline{|=|=|=|=|=|=|}
$\chi_{0^+}$ & $1$ & $1$ & $1$ & $1$ & $1$\\
\hline
$\chi_{0^-}$ & $1$ & $1$ & $1$ & $\smin 1$ & $\smin 1$\\
\hline
$\chi_{l^+}$ & $1$ & $\smin1$ & $(\smin1)^p$ & $\zeta_4$ & $\smin\zeta_4$\\
\hline
$\chi_{l^-}$ & $1$ & $\smin1$ & $(\smin1)^p$ & $\smin\zeta_4$ & $\zeta_4$\\
\hline
\thead{$\chi_k$ \\ $( 0<k<l)$}  & $2$ & $(\smin1)^k2$ & $2 \mathrm{cos}(\frac{kp\pi}{l})$ & $0$ & $0$\\
\hline
\end{tabular}
\end{center}

\noindent Thanks to \cite[Theorem $5.25$]{Pae24}, the irreducible components of $\mathcal{H}_n^{BD_{\ell}}$ that contain a $\mathbb{T}_1$-fixed point are parametrized by symmetric $2l$-cores of size less than or equal to $n$ and congruent to $n$ modulo $2l$. Moreover, the fixed points of $\mathcal{H}_n$ under $\langle \mathbb{T}_1,BD_{\ell} \rangle$ are the monomial ideals parametrized by symmetric partitions of $n$. When $\lambda$ is a symmetric partition of $n$, the fiber of the Procesi bundle over $I_{\lambda}$ is then an $(\mathfrak{S}_n \times BD_{\ell})$-module. As such, it admits a decomposition
\begin{equation*}
\left[\mathscr{P}^n_{\lambda}\right]_{\mathfrak{S}_n\times BD_{\ell}}=\sum_{\chi \in I_{BD_{\ell}}}{\left[D_{n,\chi}^{\ell}(\lambda)\boxtimes \chi\right]_{\mathfrak{S}_n \times BD_{\ell}}},
\end{equation*}
where $I_{BD_{\ell}}$ is the set of irreducible characters of $BD_{\ell}$. The goal of this section is to describe the $\mathfrak{S}_n$-modules $D_{n,\chi}^{\ell}(\lambda)$ for each $\chi \in I_{BD_{\ell}}$. To do so, we will use  Corollary \ref{cor1_a}.
\begin{lemme}
\label{4l_mult}
If $\lambda$ is a symmetric partition of $n$, then the number ${r_{D,2l}(\lambda):=\frac{n-\mathrm{g}_{2l}(\lambda)}{2l}}$ is a multiple of $2$.
\end{lemme}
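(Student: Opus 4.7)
My plan is to use the $2l$-quotient of $\lambda$. Fix a balanced $2l$-abacus representation of $\lambda$ (with a multiple of $2l$ beads), producing the $2l$-quotient $(\lambda^{(0)}, \dots, \lambda^{(2l-1)})$. Since every $2l$-hook removed from $\lambda$ en route to its $2l$-core corresponds to sliding a single bead one step up its runner, one has the identity
$$r_{D,2l}(\lambda) = r_{2l}(\lambda) = \sum_{i=0}^{2l-1}|\lambda^{(i)}|,$$
using the same abacus machinery (\cite[Proposition $1.8$]{Ol93}) already invoked in Lemma \ref{j,l_core}.

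The key ingredient is the conjugation-symmetry of the $2l$-quotient: under $\lambda \mapsto \lambda^*$, the abacus is reflected, which simultaneously reverses the runner order and transposes each component. For a symmetric partition $\lambda = \lambda^*$ this yields
$$\lambda^{(i)} = \bigl(\lambda^{(2l-1-i)}\bigr)^*, \quad \text{so in particular} \quad |\lambda^{(i)}| = |\lambda^{(2l-1-i)}|, \quad \forall i \in \llbracket 0, 2l\smin 1\rrbracket.$$
Because $2l$ is even, the involution $i \mapsto 2l-1-i$ on $\llbracket 0, 2l\smin 1\rrbracket$ has no fixed point, and partitions the index set into $l$ disjoint pairs. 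Summing over a set of representatives gives
$$r_{D,2l}(\lambda) = 2\sum_{i=0}^{l-1}|\lambda^{(i)}|,$$
which is divisible by $2$.

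The main obstacle is pinning down the conjugation-symmetry of the $2l$-quotient, as the precise form depends on the convention for the number of beads placed on the abacus. The cleanest resolution is to work with a balanced abacus, where conjugation of $\lambda$ is realized by the involution sending a bead-position $r$ to $(N\smin 1) - r$ for the chosen maximum $N$; this simultaneously reverses runner indices $i \leftrightarrow 2l-1-i$ and transposes each runner's associated one-runner partition, delivering the symmetry displayed above. Once this is in hand, the remaining steps are purely parity observations about a fixed-point-free involution on an even-sized set. An entirely equivalent alternative would be a direct induction pairing a removable $2l$-rim hook $R$ of $\lambda$ with its diagonal reflection $R^*$, using the observation that any diagonal hook $H_{(i,i)}(\lambda)$ has odd length $2a+1$ so that $R \neq R^*$ when the hook length is $2l$; but verifying that $R^*$ remains removable after excising $R$ (and that the result is self-conjugate) is essentially the same combinatorial statement, most cleanly phrased on the abacus.
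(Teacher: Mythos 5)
Your proof is correct and essentially parallels the paper's, which likewise works on the $2l$-abacus and invokes \cite[Lemma $2.2$]{Ol93} to identify conjugation with reflection of the abacus. You make explicit the pairing that the paper's terse proof leaves implicit, by passing to the $2l$-quotient, using $r_{2l}(\lambda)=\sum_{i=0}^{2l-1}|\lambda^{(i)}|$ together with the symmetry $|\lambda^{(i)}|=|\lambda^{(2l-1-i)}|$, and observing that $i\mapsto 2l-1-i$ is a fixed-point-free involution on the even index set $\llbracket 0,2l\smin 1\rrbracket$.
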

\begin{proof}
To prove this one can use \cite[Lemma $2.2$]{Ol93} and the link between abacuses and $\beta$-sets to see that the $2l$-abacus of $\lambda^*$ is equal to the horizontal reflection of the $2l$-abacus of $\lambda$. When $\lambda$ is symmetric, the number $r_{D,2l}$ of $2l$-hooks that needs to be removed to go from $\lambda$ to $\mathrm{g}_{2l}$ is a multiple of $2$.
\end{proof}

\begin{lemme}
\label{res_d_a}
The restrictions of the irreducible characters of $BD_{\ell}$ to $\mu_{2l}$ are
\begin{multicols}{2}
    \begin{itemize}
	\item $\mathrm{Res}^{BD_{\ell}}_{\mu_{2l}}(\chi_{0^+})=\tau_{2l}^0$
	\item $\mathrm{Res}^{BD_{\ell}}_{\mu_{2l}}(\chi_{0^-})=\tau_{2l}^0$
        \item  $\mathrm{Res}^{BD_{\ell}}_{\mu_{2l}}(\chi_{l^+})=\tau_{2l}^{\ell}$
        \item  $\mathrm{Res}^{BD_{\ell}}_{\mu_{2l}}(\chi_{l^-})=\tau_{2l}^{\ell}$
    \end{itemize}
    \end{multicols}
\begin{itemize}
\addtolength{\itemindent}{2.5cm}
\item $\forall i \in \llbracket 1, l \smin 1 \rrbracket,\text{ } \mathrm{Res}^{BD_{\ell}}_{\mu_{2l}}(\chi_i)=\tau_{2l}^i+\tau_{2l}^{-i}$
\end{itemize}
\end{lemme}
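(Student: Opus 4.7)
The plan is to use the definition $\chi_i := \mathrm{Ind}_{\mu_{2l}}^{BD_{\ell}}\tau_{2l}^i$ together with Mackey's formula for restriction of an induced representation. Since $\mu_{2l}$ has index $2$ in $BD_{\ell}$, with transversal $\{1,s\}$, Mackey's formula reads
\[
\mathrm{Res}^{BD_{\ell}}_{\mu_{2l}}\mathrm{Ind}^{BD_{\ell}}_{\mu_{2l}}\tau_{2l}^i \;=\; \tau_{2l}^i \;+\; {}^{s}(\tau_{2l}^i),
\]
where ${}^{s}(\tau_{2l}^i)$ is the character $x\mapsto \tau_{2l}^i(s^{-1}xs)$. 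A direct matrix calculation (using that $s$ is antidiagonal with determinant $1$, so that $s^{-1}$ is antidiagonal with reciprocal entries) gives $s^{-1}\omega_{2l}s = \omega_{2l}^{-1}$, which is the standard defining relation of the binary dihedral group. Consequently ${}^{s}(\tau_{2l}^i) = \tau_{2l}^{-i}$ and
\[
\mathrm{Res}^{BD_{\ell}}_{\mu_{2l}}\chi_i \;=\; \tau_{2l}^i + \tau_{2l}^{-i}, \qquad \forall i\in\mathbb{Z}.
\]

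For $i\in\llbracket 1,l-1\rrbracket$ the character $\chi_i$ is irreducible, and the displayed identity is exactly the content of the fifth bullet. It remains to treat the reducible cases $i=0$ and $i=l$, in which $\chi_0 = \chi_{0^+}+\chi_{0^-}$ and $\chi_l = \chi_{l^+}+\chi_{l^-}$. Inspecting either character table one sees that $\chi_{0^+}$ and $\chi_{0^-}$ (respectively $\chi_{l^+}$ and $\chi_{l^-}$) take identical values on every element of $\mu_{2l}$ and differ only on the non-trivial coset $s\mu_{2l}$. Hence each of their restrictions equals one half of the corresponding $\mathrm{Res}^{BD_{\ell}}_{\mu_{2l}}\chi_i$. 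Using $\tau_{2l}^{-l} = \tau_{2l}^{l}$ (since $\tau_{2l}^{2l}$ is trivial), this yields
\[
\mathrm{Res}^{BD_{\ell}}_{\mu_{2l}}\chi_{0^\pm} = \tau_{2l}^0, \qquad \mathrm{Res}^{BD_{\ell}}_{\mu_{2l}}\chi_{l^\pm} = \tau_{2l}^{l},
\]
completing the four remaining bullets.

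The argument has no real obstacle; the only substantive input is the conjugation identity $s^{-1}\omega_{2l}s = \omega_{2l}^{-1}$, which follows by one-line matrix multiplication from the antidiagonal form of $s$. An alternative and equally quick route bypasses Mackey entirely and verifies each identity by evaluating both sides on the generator $\omega_{2l}$ using the two character tables provided: the tabulated values $1$, $(-1)^p$ and $2\cos(kp\pi/l)$ on $\omega_{2l}^p$ coincide precisely with $\tau_{2l}^0(\omega_{2l}^p)$, $\zeta_{2l}^{lp} = \tau_{2l}^l(\omega_{2l}^p)$ and $\zeta_{2l}^{kp}+\zeta_{2l}^{-kp} = (\tau_{2l}^k+\tau_{2l}^{-k})(\omega_{2l}^p)$ respectively, with a separate check at $\omega_{2l}^l = -\mathrm{Id}$ reconciling the $(-1)^k\cdot 2$ entry with $\zeta_{2l}^{kl}+\zeta_{2l}^{-kl}$.
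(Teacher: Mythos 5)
Your proof is correct. The paper actually gives no proof of this lemma at all (it is stated without a proof environment and followed immediately by ``From there one can deduce\ldots''), so there is nothing to compare against; the intended justification is evidently either a direct reading of the two character tables or the Mackey/Clifford observation you make, and you supply both. The Mackey step hinges on $s^{-1}\omega_{2l}s=\omega_{2l}^{-1}$, which does follow from $s$ being antidiagonal of determinant one, so ${}^{s}\tau_{2l}^i=\tau_{2l}^{-i}$ and $\mathrm{Res}\,\chi_i=\tau_{2l}^i+\tau_{2l}^{-i}$ for all $i$; your handling of the split cases $i=0,l$ via $\chi_0=\chi_{0^+}+\chi_{0^-}$, $\chi_l=\chi_{l^+}+\chi_{l^-}$ and equality of the two summands on $\mu_{2l}$ is sound.

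One thing worth flagging explicitly: the statement of the lemma as printed reads $\mathrm{Res}^{BD_{\ell}}_{\mu_{2l}}(\chi_{l^\pm})=\tau_{2l}^{\ell}$ with $\ell=4l$; since $\tau_{2l}^{2l}$ is trivial this would say $\tau_{2l}^{\ell}=\tau_{2l}^{0}$, which contradicts both character tables (the restriction of $\chi_{l^\pm}$ to $\omega_{2l}^p$ is $(-1)^p$, i.e.\ $\tau_{2l}^{l}$, not the trivial character). Your computation correctly produces $\tau_{2l}^{l}$; the $\tau_{2l}^{\ell}$ in the paper is a typo for $\tau_{2l}^{l}$.
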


\noindent From there one can deduce the following information on the $D_{n,\chi}^{\ell}(\lambda)$ modules.
\begin{prop}
\label{link_a_d}
For each symmetric partition $\lambda$ of $n$, the following equalities hold in $\mathcal{R}(\mathfrak{S}_n)$
\begin{enumerate}[label=(\roman*)]
\item $\left[D_{n,\chi_{0^+}}^{\ell}(\lambda)+D_{n,\chi_{0^-}}^{\ell}(\lambda)\right]=\left[\left(\mathscr{P}^n_{\lambda}\right)_{0}^{2l}\right]$
\item $\left[D_{n,\chi_{l^+}}^{\ell}(\lambda)+D_{n,\chi_{l^-}}^{\ell}(\lambda)\right]=\left[\left(\mathscr{P}^n_{\lambda}\right)_{l}^{2l}\right]$
\item $\left[D_{n,\chi_{l^+}}^{\ell}(\lambda)\right]=\left[D_{n,\chi_{l^-}}^{\ell}(\lambda)\right]$
\item $\forall i \in \llbracket 1, l \smin 1 \rrbracket, \left[D_{n,\chi_{i}}^{\ell}(\lambda)\right]=\left[\left(\mathscr{P}^n_{\lambda}\right)_{i}^{2l}\right]=\left[\left(\mathscr{P}^n_{\lambda}\right)_{2l-i}^{2l}\right]$
\end{enumerate}
\end{prop}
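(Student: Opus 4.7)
The plan is to extract (i), (ii), and (iv) by restricting the $(\mathfrak{S}_n \times BD_\ell)$-decomposition of $\mathscr{P}^n_\lambda$ along the inclusion ${\mu_{2l} \hookrightarrow BD_\ell}$, and to handle (iii) separately using the extra torus automorphism provided by $\mathbb{T}_1$. Applying $\mathrm{Res}^{BD_\ell}_{\mu_{2l}}$ to ${[\mathscr{P}^n_\lambda] = \sum_{\chi}[D^\ell_{n,\chi}(\lambda) \boxtimes \chi]}$ and substituting Lemma~\ref{res_d_a} yields
\begin{align*}
[\mathscr{P}^n_\lambda]_{\mathfrak{S}_n \times \mu_{2l}} ={}& [(D^\ell_{n,\chi_{0^+}}(\lambda) + D^\ell_{n,\chi_{0^-}}(\lambda)) \boxtimes \tau_{2l}^0] \\
&+ [(D^\ell_{n,\chi_{l^+}}(\lambda) + D^\ell_{n,\chi_{l^-}}(\lambda)) \boxtimes \tau_{2l}^l] \\
&+ \sum_{i=1}^{l-1} [D^\ell_{n,\chi_i}(\lambda) \boxtimes (\tau_{2l}^i + \tau_{2l}^{2l-i})].
\end{align*}
Matching $\tau_{2l}^j$-isotypic components with those of $\sum_j [(\mathscr{P}^n_\lambda)^{2l}_j \boxtimes \tau_{2l}^j]$ gives (i) (from $j = 0$), (ii) (from $j = l$), and both equalities of (iv) (from $j = i$ and $j = 2l - i$, which carry the same coefficient $D^\ell_{n,\chi_i}(\lambda)$).

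The remaining statement (iii) cannot be extracted from the restriction to $\mu_{2l}$, since $\chi_{l^+}$ and $\chi_{l^-}$ have identical restriction. The idea is to use the element ${t := \mathrm{diag}(\zeta_{4l}, \zeta_{4l}^{-1}) \in \mathbb{T}_1 \subset \mathrm{SL}_2(\mathbb{C})}$: because $\lambda$ is symmetric, $I_\lambda$ is fixed by the full diagonal torus, so $t$ induces an $\mathfrak{S}_n$-equivariant linear automorphism of the fiber $\mathscr{P}^n_\lambda$. A direct matrix computation gives $tst^{-1} = \omega_{2l}\, s$ in $\mathrm{SL}_2(\mathbb{C})$, and the relation $s\omega s^{-1} = \omega^{-1}$ for $\omega \in \mu_{2l}$ shows that $\omega_{2l}\, s$ and $s\,\omega_{2l}$ lie in the same $BD_\ell$-conjugacy class (namely, the class of elements of the form $\omega_{2l}^{2k+1}\, s$). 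Hence, for every $\sigma \in \mathfrak{S}_n$, cyclicity of trace together with $[t,\sigma] = 0$ and the class-function property yield
\[
\mathrm{tr}\big((\sigma, s) \mid \mathscr{P}^n_\lambda\big) = \mathrm{tr}\big((\sigma, t s t^{-1}) \mid \mathscr{P}^n_\lambda\big) = \mathrm{tr}\big((\sigma, s\,\omega_{2l}) \mid \mathscr{P}^n_\lambda\big).
\]

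Inspecting the two character tables displayed above, among the irreducible characters of $BD_\ell$ only $\chi_{l^+}$ and $\chi_{l^-}$ take distinct values on the conjugacy classes of $s$ and $s\omega_{2l}$; moreover, in both parities of $l$ one reads off $\chi_{l^\pm}(s\omega_{2l}) = -\chi_{l^\pm}(s)$ with the $\pm$ signs swapped. Substituting into the trace identity, the $\chi_{0^\pm}$ and $\chi_i$ ($0 < i < l$) contributions cancel on both sides and one is left with ${\chi_{D^\ell_{n,\chi_{l^+}}(\lambda)}(\sigma)\chi_{l^+}(s) + \chi_{D^\ell_{n,\chi_{l^-}}(\lambda)}(\sigma)\chi_{l^-}(s) = 0}$ for every $\sigma \in \mathfrak{S}_n$, which forces $[D^\ell_{n,\chi_{l^+}}(\lambda)]_{\mathfrak{S}_n} = [D^\ell_{n,\chi_{l^-}}(\lambda)]_{\mathfrak{S}_n}$. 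The main delicate ingredient is the uniform parity-free verification that $\omega_{2l}s$ and $s\omega_{2l}$ are $BD_\ell$-conjugate and that $\chi_{l^\pm}$ behaves symmetrically on these classes; once these checks are in hand, everything else is bookkeeping using Lemma~\ref{res_d_a} and the definition of the isotypic pieces $(\mathscr{P}^n_\lambda)^{2l}_j$.
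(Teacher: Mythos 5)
Your proposal is correct, and the core idea for each part is the right one, but you carry out two of the parts along a visibly different route than the paper, so a comparison is worthwhile. For (i) and (ii) you and the paper do exactly the same thing: restrict along $\mu_{2l}\hookrightarrow BD_{\ell}$ and match $\tau_{2l}^{j}$-isotypic pieces using Lemma~\ref{res_d_a}. For (iv), however, your careful bookkeeping of the restriction already yields \emph{both} equalities, namely $[D^{\ell}_{n,\chi_{i}}(\lambda)]=[(\mathscr{P}^n_\lambda)^{2l}_{i}]$ from the $\tau_{2l}^{i}$-component and $[D^{\ell}_{n,\chi_{i}}(\lambda)]=[(\mathscr{P}^n_\lambda)^{2l}_{2l-i}]$ from the $\tau_{2l}^{2l-i}$-component; the paper only records the weaker identity $2[D^{\ell}_{n,\chi_{i}}(\lambda)]=[(\mathscr{P}^n_\lambda)^{2l}_{i}+(\mathscr{P}^n_\lambda)^{2l}_{2l-i}]$ from the restriction and then invokes Haiman's Macdonald symmetry (\cite[Proposition~3.5.11]{H03}) to split it. So your argument for (iv) is leaner: the symmetry $(\mathscr{P}^n_\lambda)^{2l}_{i}\cong(\mathscr{P}^n_\lambda)^{2l}_{2l-i}$ is forced by the mere existence of the $BD_{\ell}$-structure refining the $\mu_{2l}$-structure, with no recourse to Macdonald polynomials. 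For (iii), the essential input is the same in both proofs, namely conjugation by $\omega_{\ell}=\mathrm{diag}(\zeta_{4l},\zeta_{4l}^{-1})$, which acts on $\mathscr{P}^n_\lambda$ because $\lambda$ is symmetric. The paper packages this abstractly: $\mathscr{P}^n_\lambda$ is a $BD_{2\ell}$-module, conjugation by $\omega_\ell$ trivially fixes its class in $\mathcal{R}(\mathfrak{S}_n\times BD_{2\ell})$, and since $\omega_\ell$-conjugation swaps $\chi_{l^{+}}$ and $\chi_{l^{-}}$ in $I_{BD_\ell}$ while fixing the rest, restriction to $BD_\ell$ forces the equality of the two multiplicity modules. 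You instead compute traces directly: $t s t^{-1}=\omega_{2l}s$, which is $BD_\ell$-conjugate to $s\omega_{2l}$, and reading off the character tables shows only $\chi_{l^{\pm}}$ distinguish the two classes and do so with opposite sign, whence the $\chi_{l^{\pm}}$-multiplicities coincide. Both arguments work; yours is more elementary and self-contained (it never leaves $BD_\ell$ and only looks at two columns of the character table), while the paper's is shorter if one is comfortable with conjugation actions on characters of a normal subgroup. All the small verifications you flag (that $\omega_{2l}s$ and $s\omega_{2l}$ lie in the odd-power class, that $\chi_{l^{\pm}}$ negate and swap between the two classes in both parities of $l$, and that $t$ commutes with $\mathfrak{S}_n$ on the fiber) do check out.
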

\begin{proof}
Equality $(i)$ and $(ii)$ comes directly from Lemma \ref{res_d_a}. Concerning equality $(iii)$, note that $BD_{\ell} \triangleleft BD_{2\ell}$ and that $\omega_{\ell}$ acts nontrivially on $I_{BD_{\ell}}$. It swaps $\chi_{l^+}$ and $\chi_{l^-}$ and fixes all other irreducible characters of $BD_{\ell}$. The $\mathfrak{S}_n$-module $\mathscr{P}^n_{\lambda}$ being bigraded, it follows that
\begin{equation*}
\omega_{\ell}.\left[\mathscr{P}^n_{\lambda}\right]_{\mathfrak{S}_n\times BD_{2\ell}}=\left[\mathscr{P}^n_{\lambda}\right]_{\mathfrak{S}_n \times BD_{2\ell}}.
\end{equation*}
Now, applying the restriction from $\mathfrak{S}_n \times BD_{2\ell}$ to $\mathfrak{S}_n \times BD_{\ell}$, one has
\begin{equation*}
\left[D_{n,\chi_{l^+}}^{\ell}(\lambda)\right]=\left[D_{n,\chi_{l^-}}^{\ell}(\lambda)\right].
\end{equation*}

\noindent Moreover, by combining \cite[Proposition $3.5.11$]{H03} with Lemma \ref{res_d_a}, it follows that
\begin{equation*}
2\left[D_{n,\chi_{i}}^{\ell}(\lambda)\right]=\left[\left(\mathscr{P}_{\lambda}^n\right)_{i}^{2l}+\left(\mathscr{P}_{\lambda}^n\right)_{2l-i}^{2l}\right]=2\left[\left(\mathscr{P}_{\lambda}^n\right)_{i}^{2l}\right].
\end{equation*}
\end{proof}

\noindent Lemma \ref{4l_mult} implies that $n-\mathrm{g}_{2l}$ is a multiple of $\ell = 4l$. Recall from \eqref{eq:wperm} that we can associate to the integers $n,\mathrm{g}_{2l}$ and $r_{2l} = (n - \mathrm{g}_{2l})/2l$ the permutation $w^{\mathrm{g}_{2l}}_{2l,n} \in \mathfrak{S}_n$ of order $2l$. Moreover, one can choose $s_{2l} \in \mathfrak{S}_{n-\mathrm{g}_{2l}}$ such that $N_{\ell} := \langle w^{\mathrm{g}_{2l}}_{2l,n},s_{2l} \rangle \subset \mathfrak{S}_{n - \mathrm{g}_{2l}}$ is abstractly isomorphic to $BD_{\ell}$.

\begin{ex}
When $l=2$, $\lambda$ is a symmetric partition of $8$, $r_{D,4}(\lambda)=2$ then $\mathrm{g}_{2l}=0$. In that case $w^{0}_{2l,n}=(1 2 3 4)(5 6 7 8) \in \mathfrak{S}_8$ and one can take $s_{2l}=(1 8 3 6)(2 7 4 5) \in \mathfrak{S}_8$.
\end{ex}

\begin{prop}
\label{prop_d_type}
 For each symmetric partition $\lambda$ of $n$ and for each $i \in \llbracket 1, l \smin 1 \rrbracket$,
\begin{equation*}
\left[D^{\ell}_{n,\chi_i}(\lambda)\right]_{\mathfrak{S}_n}=\sum_{j=0}^{2l \smin 1}{\left[\mathrm{Ind}_{\mathfrak{S}_{\mathrm{g}_{2l}}\times N_{\ell}}^{\mathfrak{S}_n}\left(\left(\mathscr{P}_{\gamma_{2l}}^{\mathrm{g}_{2l}}\right)^{2l}_j\boxtimes \chi_{i-j}\right)\right]_{\mathfrak{S}_n}}.
\end{equation*}
Moreover,
\begin{equation*}
\left[D^{\ell}_{n,\chi_{l^+}}(\lambda)\right]_{\mathfrak{S}_n}=\frac{1}{2}\sum_{j=0}^{2l \smin 1}{\left[\mathrm{Ind}_{\mathfrak{S}_{\mathrm{g}_{2l}}\times N_{\ell}}^{\mathfrak{S}_n}\left(\left(\mathscr{P}_{\gamma_{2l}}^{\mathrm{g}_{2l}}\right)^{2l}_j\boxtimes \chi_{l-j}\right)\right]_{\mathfrak{S}_n}}.
\end{equation*}
\end{prop}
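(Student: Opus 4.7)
The plan is to obtain both formulas as immediate consequences of Corollary \ref{cor1_a} (applied to the cyclic subgroup $\mu_{2l}$ of $\mathrm{SL}_2(\mathbb{C})$) combined with Proposition \ref{link_a_d}. The key observation is that the subgroup $N_\ell \subset \mathfrak{S}_n$ is abstractly isomorphic to $BD_\ell$ via $w^{\mathrm{g}_{2l}}_{2l,n} \mapsto \omega_{2l}$ and $s_{2l} \mapsto s$, and under this isomorphism the cyclic factor $C_{2l,n}$ of $W^{\mathrm{g}_{2l}}_{2l,n}$ corresponds precisely to $\mu_{2l} \subset BD_\ell$.

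First, I would apply Corollary \ref{cor1_a} with $\ell$ replaced by $2l$ to get the decomposition
\[
[\mathscr{P}^n_\lambda]_{\mathfrak{S}_n \times \mu_{2l}} = \sum_{i=0}^{2l-1} \sum_{j=0}^{2l-1} \left[\mathrm{Ind}_{W^{\mathrm{g}_{2l}}_{2l,n}}^{\mathfrak{S}_n}\left((\mathscr{P}^{\mathrm{g}_{2l}}_{\gamma_{2l}})^{2l}_j \boxtimes \theta_{2l}^{i-j}\right) \boxtimes \tau_{2l}^i\right]_{\mathfrak{S}_n \times \mu_{2l}},
\]
and extract the $\tau_{2l}^i$-isotypic component, yielding an equality of $\mathfrak{S}_n$-modules
\[
\left[(\mathscr{P}^n_\lambda)^{2l}_i\right]_{\mathfrak{S}_n} = \sum_{j=0}^{2l-1}\left[\mathrm{Ind}_{W^{\mathrm{g}_{2l}}_{2l,n}}^{\mathfrak{S}_n}\left((\mathscr{P}^{\mathrm{g}_{2l}}_{\gamma_{2l}})^{2l}_j \boxtimes \theta_{2l}^{i-j}\right)\right]_{\mathfrak{S}_n}.
\]

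Next, transferring the identity $\chi_k = \mathrm{Ind}_{\mu_{2l}}^{BD_\ell}\tau_{2l}^k$ through the isomorphism $N_\ell \simeq BD_\ell$, I read it as $\chi_k = \mathrm{Ind}_{C_{2l,n}}^{N_\ell}\theta_{2l}^k$ in $\mathcal{R}(N_\ell)$. Transitivity of induction then gives, for every integer $k$,
\[
\mathrm{Ind}_{W^{\mathrm{g}_{2l}}_{2l,n}}^{\mathfrak{S}_n}\left((\mathscr{P}^{\mathrm{g}_{2l}}_{\gamma_{2l}})^{2l}_j \boxtimes \theta_{2l}^{k}\right) \cong \mathrm{Ind}_{\mathfrak{S}_{\mathrm{g}_{2l}} \times N_\ell}^{\mathfrak{S}_n}\left((\mathscr{P}^{\mathrm{g}_{2l}}_{\gamma_{2l}})^{2l}_j \boxtimes \chi_k\right),
\]
where on the right-hand side $\chi_k$ is viewed as a character of $N_\ell$. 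Substituting into the previous display recasts $[(\mathscr{P}^n_\lambda)^{2l}_i]$ in precisely the form appearing in the statement.

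Finally, I would finish by applying Proposition \ref{link_a_d}: for $i \in \llbracket 1, l-1 \rrbracket$, part (iv) gives $[D^\ell_{n,\chi_i}(\lambda)] = [(\mathscr{P}^n_\lambda)^{2l}_i]$, which yields the first formula directly; for $\chi_{l^{\pm}}$, parts (ii) and (iii) combine to give $2[D^\ell_{n,\chi_{l^+}}(\lambda)] = [(\mathscr{P}^n_\lambda)^{2l}_l]$, which gives the claimed factor of $1/2$ once the same chain of identities is applied with $i = l$. The only delicate point is the bookkeeping in the transitivity step, i.e.\ confirming that the chain of inclusions $W^{\mathrm{g}_{2l}}_{2l,n} = \mathfrak{S}_{\mathrm{g}_{2l}} \times C_{2l,n} \subset \mathfrak{S}_{\mathrm{g}_{2l}} \times N_\ell \subset \mathfrak{S}_n$ and the identification of $\chi_k\vert_{N_\ell}$ with $\mathrm{Ind}_{C_{2l,n}}^{N_\ell}\theta_{2l}^k$ are compatible with the external tensor product $(\mathscr{P}^{\mathrm{g}_{2l}}_{\gamma_{2l}})^{2l}_j \boxtimes (-)$; this should follow routinely since $\mathfrak{S}_{\mathrm{g}_{2l}}$ acts trivially on both $C_{2l,n}$ and $N_\ell$.
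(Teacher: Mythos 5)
Your proposal is correct and follows essentially the same route as the paper's proof: apply Corollary \ref{cor1_a} with cyclic group $\mu_{2l}$, read off the $\tau_{2l}^i$-isotypic piece via Proposition \ref{link_a_d}(iv) (respectively (ii)+(iii) for the factor $\tfrac12$ in the second formula), then use transitivity of induction through $\mathfrak{S}_{\mathrm{g}_{2l}}\times C_{2l,n}\subset\mathfrak{S}_{\mathrm{g}_{2l}}\times N_\ell\subset\mathfrak{S}_n$ together with the identification $\chi_k=\mathrm{Ind}_{C_{2l,n}}^{N_\ell}\theta_{2l}^k$. Your remark about the compatibility of the inner induction with the external tensor product is exactly the one-line verification needed, and is indeed immediate since $\mathfrak{S}_{\mathrm{g}_{2l}}$ is a direct factor.
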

\begin{proof}
Fix $i \in \llbracket 1,l \smin 1 \rrbracket$. Thanks to Proposition \ref{link_a_d} and Corollary \ref{cor1_a}, one has
\begin{align*}
\left[D^{\ell}_{n,\chi_i}(\lambda)\right]_{\mathfrak{S}_n}=&\sum_{j=0}^{2l \smin1}{\left[\mathrm{Ind}_{W^{\mathrm{g}_{2l}}_{2l,n}}^{\mathfrak{S}_n}\left(\left(\mathscr{P}_{\gamma_{2l}}^{\mathrm{g}_{2l}}\right)^{2l}_j\boxtimes \theta_{2l}^{i-j}\right)\right]_{\mathfrak{S}_n}}\\
                      =&\sum_{j=0}^{2l \smin 1}{\left[\mathrm{Ind}_{\mathfrak{S}_{\mathrm{g}_{2l}}\times N_{\ell}}^{\mathfrak{S}_n}\left(\mathrm{Ind}_{W_{2l,n}^{\mathrm{g}_{2l}}}^{\mathfrak{S}_{\mathrm{g}_{2l}}\times N_{\ell}}\left(\left(\mathscr{P}_{\gamma_{2l}}^{\mathrm{g}_{2l}}\right)^{2l}_j\boxtimes \theta_{2l}^{i-j}\right)\right)\right]_{\mathfrak{S}_n}}\\
                      =&\sum_{j=0}^{2l \smin 1}{\left[\mathrm{Ind}_{\mathfrak{S}_{\mathrm{g}_{2l}}\times N_{\ell}}^{\mathfrak{S}_n}\left(\left(\mathscr{P}_{\gamma_{2l}}^{\mathrm{g}_{2l}}\right)^{2l}_j\boxtimes \chi_{i-j}\right)\right]_{\mathfrak{S}_n}}.
\end{align*}
The same computation gives us the second formula for $\left[D^{\ell}_{n,\chi_{l^+}}(\lambda)\right]_{\mathfrak{S}_n}$. Thanks to Proposition \ref{link_a_d}, it is equal to $\frac{1}{2}\left[\left(\mathscr{P}^n_{\lambda}\right)_{l}^{2l}\right]_{\mathfrak{S}_n}$.
\end{proof}

\begin{rmq}
Note that Proposition~\ref{link_a_d} and Proposition~\ref{prop_d_type} together allow one to express all but two of the $\mathfrak{S}_n$-modules $\left[D^{\ell}_{n,\chi}(\lambda)\right]$ in terms of the $\mathfrak{S}_{\mathrm{g}_{2l}}$-modules $\left(\left[D^{\ell}_{\mathrm{g}_{2l},\chi}(\lambda)\right]\right)_{\chi \in I_{BD_{\ell}}}$. It is not clear how to express $\left[D_{n,\chi_{0^{+}}}^{\ell}(\lambda)\right]$ and $\left[D_{n,\chi_{0^{-}}}^{\ell}(\lambda)\right]$ in this way since we do not know how $\left[\left(\mathscr{P}^n_{\lambda}\right)_{0}^{2l}\right]$ splits in two in Proposition~\ref{link_a_d}(i).
\end{rmq}
\printbibliography
\Addresses
\end{document}